\newtheorem{letterthm}{Theorem}
\newtheorem{lettercor}[letterthm]{Corollary}
\newtheorem{thm}{Theorem}[section]
\newtheorem{lem}[thm]{Lemma}
\newtheorem{prop}[thm]{Proposition}
\theoremstyle{definition}
\newtheorem{df}[thm]{Definition}
\newtheorem*{claim}{Claim}
\newcommand{\R}{\mathbf{R}}
\newcommand{\C}{\mathbf{C}}
\newcommand{\Z}{\mathbf{Z}}
\newcommand{\F}{\mathbf{F}}
\newcommand{\Q}{\mathbf{Q}}
\newcommand{\N}{\mathbf{N}}
\newcommand{\B}{\mathbf{B}}
\newcommand{\K}{\mathbf{K}}
\newcommand{\Ad}{\mathord{\text{\rm Ad}}}
\newcommand{\id}{\text{\rm id}}
\newcommand{\rL}{\mathord{\text{\rm L}}}
\newcommand{\rC}{\mathord{\text{\rm C}}}
\newcommand{\rE}{\mathord{\text{\rm E}}}
\newcommand{\core}{\mathord{\text{\rm c}}}
\newcommand{\alg}{\mathord{\text{\rm alg}}}
\newcommand{\Prob}{\mathord{\text{\rm Prob}}}
\newcommand{\SL}{\mathord{\text{\rm SL}}}
\newcommand{\Leb}{\mathord{\text{\rm Leb}}}
\newcommand{\gr}{\mathord{\text{\rm gr}}}
\newcommand{\spn}{\mathord{\text{\rm span}}}
\newcommand{\ovt}{\mathbin{\overline{\otimes}}}
\newcommand{\otm}{\otimes_{\text{\rm min}}}
\newcommand{\ota}{\otimes_{\text{\rm alg}}}
\newcommand{\red}{\rtimes_{\text{\rm red}}}
\begin{document}

\title[Bi-exact groups, strongly ergodic actions and type III factors]{Bi-exact groups, strongly ergodic actions and group measure space type III factors with no central sequence}

\begin{abstract}
We investigate the asymptotic structure of (possibly type ${\rm III}$) crossed product von Neumann algebras $M = B \rtimes \Gamma$ arising from arbitrary actions $\Gamma \curvearrowright B$ of bi-exact discrete groups (e.g.\ free groups) on amenable von Neumann algebras. We prove a spectral gap rigidity result for the central sequence algebra $N' \cap M^\omega$ of any nonamenable von Neumann subalgebra with normal expectation $N \subset M$. We use this result to show that for any strongly ergodic essentially free nonsingular action $\Gamma \curvearrowright (X, \mu)$ of any bi-exact countable discrete group on a standard probability space, the corresponding group measure space factor $\rL^\infty(X) \rtimes \Gamma$ has no nontrivial central sequence. Using recent results of Boutonnet--Ioana--Salehi Golsefidy \cite{BISG15}, we construct, for every $0 < \lambda \leq 1$, a type ${\rm III_\lambda}$ strongly ergodic essentially free nonsingular action $\F_\infty \curvearrowright (X_\lambda, \mu_\lambda)$ of the free group $\mathbf F_\infty$ on a standard probability space so that the corresponding group measure space type ${\rm III_\lambda}$ factor $\rL^\infty(X_\lambda, \mu_\lambda) \rtimes \F_\infty$ has no nontrivial central sequence by our main result. In particular, we obtain the first examples of group measure space type ${\rm III}$ factors with no nontrivial central sequence.
\end{abstract}

\author{Cyril Houdayer}
\address{Laboratoire de Math\'ematiques d'Orsay, Universit\'e Paris-Sud, CNRS, Universit\'e Paris-Saclay, 91405 Orsay, France}
\email{cyril.houdayer@math.u-psud.fr}
\thanks{CH is supported by ERC Starting Grant GAN 637601}

\author{Yusuke Isono}
\address{RIMS, Kyoto University, 606-8502 Kyoto, Japan}
\email{isono@kurims.kyoto-u.ac.jp}
\thanks{YI is supported by JSPS Research Fellowship}

\subjclass[2010]{46L10, 46L36, 46L06, 37A20}
\keywords{Bi-exact discrete groups; Full factors; Group measure space construction; Ozawa's condition (AO); Popa's intertwining techniques; Strongly ergodic actions; Ultraproduct von Neumann algebras}

\maketitle

\section{Introduction and statement of the main results}

The {\em group measure space construction} of Murray and von Neumann \cite{MvN43} associates to any ergodic (essentially) free nonsingular action $\Gamma \curvearrowright (X, \mu)$ of a countable discrete group on a standard probability space a factor denoted by $\rL^\infty(X) \rtimes \Gamma$. A fundamental question in operator algebras is how much information does the group measure space factor  $\rL^\infty(X) \rtimes \Gamma$ retain from the group action $\Gamma \curvearrowright (X, \mu)$? This question has attracted a lot of attention during the last 15 years and several important developments regarding the structure and the rigidity of group measure space factors have been made possible thanks to Popa's {\em deformation/rigidity} theory \cite{Po06a}. We refer the reader to \cite{Ga10, Va10, Io12b} for recent surveys on this topic.

One of the questions we address in this paper is the following general problem: Under which assumptions on the countable discrete group $\Gamma$ and the ergodic free nonsingular action $\Gamma \curvearrowright (X, \mu)$, the group measure space factor $\rL^\infty(X) \rtimes \Gamma$ is full? Recall from \cite{Co74} that a factor $M$ with separable predual is {\em full} if its asymptotic centralizer $M_\omega$ is trivial for some (or any) nonprincipal ultrafilter $\omega \in \beta(\N) \setminus \N$. By \cite[Theorem 5.2]{AH12}, a factor $M$ with separable predual is full if and only if its central sequence algebra $M' \cap M^\omega$ is trivial for some (or any) nonprincipal ultrafilter $\omega \in \beta(\N) \setminus \N$ (see Section \ref{preliminaries} for further details). If the group measure space factor $\rL^\infty(X) \rtimes \Gamma$ is full then the free nonsingular action $\Gamma \curvearrowright (X, \mu)$ is necessarily {\em strongly ergodic}, that is, any $\Gamma$-asymptotically invariant sequence of measurable subsets of $X$ is trivial. The converse is not true in general as demonstrated in the celebrated example by Connes and Jones \cite{CJ81}. Indeed, they exhibited an example of a strongly ergodic free probability measure preserving (pmp) action such the associated group measure space ${\rm II_1}$ factor is {\em McDuff}, that is, tensorially absorbs the hyperfinite ${\rm II_1}$ factor of Murray and von Neumann.

The general problem mentioned above has nevertheless a satisfactory answer in the case when the action $\Gamma \curvearrowright (X, \mu)$ is {\em pmp}. Indeed, it was shown by Choda in \cite{Ch81} that when the countable discrete group $\Gamma$ is not {\em inner amenable} and the  free pmp action $\Gamma \curvearrowright (X, \mu)$ is strongly ergodic, then the group measure space ${\rm II_1}$ factor $\rL^\infty(X) \rtimes \Gamma$ is full. The facts that the group $\Gamma$ is not inner amenable and the action $\Gamma \curvearrowright (X, \mu)$ is pmp imply that all the central sequences in $\rL^\infty(X) \rtimes \Gamma$ must asymptotically lie in $\rL^\infty(X)$. It follows immediately that $\rL^\infty(X) \rtimes \Gamma$ is full if the action is strongly ergodic. In the above reasoning, the assumption that the action $\Gamma \curvearrowright (X, \mu)$ is pmp is crucial since nonamenable (and in particular non-inner amenable) groups always admit an amenable (in the sense of Zimmer \cite[Definition 4.3.1]{Zi84}) type ${\rm III}$ ergodic nonsingular action, namely the Poisson boundary action. Very little is known about the general problem mentioned above when the action $\Gamma \curvearrowright (X, \mu)$ is no longer pmp and is more generally nonsingular (possibly of type ${\rm III}$).

In this paper, we investigate the asymptotic structure of (possibly type ${\rm III}$) group measure space factors $\rL^\infty(X) \rtimes \Gamma$ and more generally of (possibly type ${\rm III}$) crossed product von Neumann algebras $B \rtimes \Gamma$ arising from arbitrary actions $\Gamma \curvearrowright B$ of bi-exact discrete groups on amenable von Neumann algebras. The class of {\em bi-exact} discrete groups was introduced by Ozawa in \cite{Oz04} (see also \cite[Chapter 15]{BO08}) and includes amenable groups, free groups, Gromov word-hyperbolic groups and discrete subgroups of connected simple Lie groups of real rank one. We refer the reader to Section \ref{preliminaries} for a precise definition. Any bi-exact discrete group is either amenable or non-inner amenable \cite{Oz04}. Ozawa's celebrated result \cite{Oz03} asserts that bi-exact discrete groups $\Gamma$ give rise to {\em solid} group von Neumann algebras $\rL(\Gamma)$, that is, for any diffuse von Neumann algebra $A \subset \rL(\Gamma)$, the relative commutant $A' \cap \rL(\Gamma)$ is amenable. Moreover, any solid ${\rm II_1}$ factor is either amenable or full \cite[Proposition 7]{Oz03}. Recall that an inclusion of von Neumann algebras $N \subset M$ is {\em with expectation} if there exists a faithful normal conditional expectation $\rE_N : M \to N$.

Our first main result is a {\em spectral gap rigidity} result inside crossed product von Neumann algebras $M = B \rtimes \Gamma$ arising from arbitrary actions $\Gamma \curvearrowright B$ of bi-exact discrete groups on amenable $\sigma$-finite von Neumann algebras. More precisely, we prove that for any von Neumann subalgebra with expectation $N \subset M$, either $N$ has a nonzero amenable direct summand or the central sequence algebra $N' \cap M^\omega$ lies in the smaller algebra $B^\omega \rtimes \Gamma$. Our Theorem \ref{thmA} can be regarded as an analogue of the spectral gap rigidity results discovered by Peterson in \cite[Theorem 4.3]{Pe06} and Popa in \cite[Theorem 1.5]{Po06b} and \cite[Lemma 2.2]{Po06c}. 

\begin{letterthm}\label{thmA}
	Let $\Gamma$ be any bi-exact discrete group, $B$ any amenable $\sigma$-finite von Neumann algebra and $\Gamma \curvearrowright B$ any action. Denote by $M:=B\rtimes \Gamma$ the corresponding crossed product von Neumann algebra. Let $p\in M$ be any nonzero projection and $N\subset pMp$ any von Neumann subalgebra with expectation. Let $\omega \in \beta(\N) \setminus \N$ be any nonprincipal ultrafilter.
	
Then at least one of the following conditions holds true:
	\begin{itemize}
		\item The von Neumann algebra $N$ has a nonzero amenable direct summand.
		\item We have $N'\cap pM^\omega p \subset p(B^\omega \rtimes \Gamma) p$. In this case, we further obtain $A \preceq_{B^\omega \rtimes \Gamma} B^\omega$ for any finite von  Neumann subalgebra with expectation $A\subset N'\cap pM^\omega p$.
	\end{itemize}
\end{letterthm}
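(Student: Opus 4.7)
The strategy is to adapt Popa--Peterson spectral gap rigidity to type $\mathrm{III}$ crossed products, with bi-exactness of $\Gamma$ providing the crucial C*-algebraic input. Throughout, one should think of the first alternative (existence of an amenable direct summand of $N$) as the obstruction to the second, so the heart of the proof is a dichotomy argument: either certain $N$-central states produced along the way force amenability of a piece of $N$, or they force central sequences to concentrate in $B^\omega \rtimes \Gamma$.

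The first step is to establish a condition (AO)-type property for the inclusion $B^\omega \rtimes \Gamma \subset M^\omega$. Bi-exactness of $\Gamma$ --- equivalently, topological amenability of the $\Gamma \times \Gamma$-action on a suitable boundary of $\Gamma$ --- provides a UCP map on $C^*_r(\Gamma) \otm C^*_r(\Gamma)^{\mathrm{op}}$ that factors nuclearly modulo $\K(\ell^2\Gamma)$. I would lift this to a UCP map
\[
\Phi : \mathcal C \otm \mathcal C^{\mathrm{op}} \longrightarrow \B(\rL^2 M) / \mathcal J
\]
where $\mathcal C$ is a C*-algebra containing a suitable weakly dense $*$-subalgebra of $M$ and $\mathcal J$ is a ``small at infinity'' ideal engineered so that its pullback inside $M$ corresponds precisely to the bimodule coming from $B^\omega \rtimes \Gamma$. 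In other words, the pair $(M, B^\omega \rtimes \Gamma)$ should land in the class $\Cao$. The $\sigma$-finite non-tracial framework forces one to be careful: rather than $B \rtimes \Gamma$, the distinguished ``large'' algebra must be $B^\omega \rtimes \Gamma$, so the small-at-infinity ideal has to be stable under ultralimits in the second variable.

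Given the framework above, assume that $N$ has no amenable direct summand and take $x \in N' \cap pM^\omega p$. The $N$-centrality of $x$ means that states constructed from $x \otimes x^*$ via $\Phi$ are $N$-central. If $x$ were not contained in $p(B^\omega \rtimes \Gamma) p$, one such state would be nonzero on the nuclearly factored quotient; via Connes/Haagerup-type characterizations applied to the resulting $N$-bimodule, this forces a nonzero amenable direct summand of $N$, a contradiction. Hence $x \in p(B^\omega \rtimes \Gamma) p$, proving the first conclusion.

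For the ``moreover'' claim, let $A \subset N' \cap pM^\omega p$ be a finite von Neumann subalgebra with expectation and suppose for contradiction that $A \not\preceq_{B^\omega \rtimes \Gamma} B^\omega$. The non-tracial Popa intertwining criterion then yields a net of unitaries $u_i \in \mathcal U(A)$ with $\rE_{B^\omega}(a u_i b^*) \to 0$ in the appropriate seminorm for $a,b$ in a dense $*$-subalgebra of $B^\omega \rtimes \Gamma$. Because $u_i \in N'$, feeding this asymptotically $B^\omega$-orthogonal net back into the AO spectral gap estimate from the previous step produces an $N$-central state supported on the ``purely non-$B^\omega$'' part of the boundary, which by the same injectivity argument contradicts the non-amenability of $N$. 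The main obstacle is the first step: constructing $\Phi$ and the ideal $\mathcal J$ so that the conclusion correctly distinguishes the \emph{ultrapower} algebra $B^\omega \rtimes \Gamma$ inside $M^\omega$. Ozawa's original bi-exactness framework handles only the boundary of $\Gamma$ itself, so extending it to a crossed product with a non-trace-preserving action and keeping the construction ultralimit-stable in the $B$-variable is where the real technical work lives.
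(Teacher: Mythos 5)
Your overall strategy --- spectral gap rigidity driven by a relative condition (AO) coming from bi-exactness, plus Popa's intertwining criterion for the ``moreover'' clause --- is the right one, but the proposal leaves the decisive step unexecuted, and the mechanism you sketch for it is not the one that works. You propose to engineer a single ``small at infinity'' ideal $\mathcal J \subset \B(\rL^2(M))$ whose quotient directly detects membership in $B^\omega \rtimes \Gamma$, and you yourself flag that making this ``ultralimit-stable in the $B$-variable'' is where the real work lives. The paper does not do this, and it is far from clear such an ideal exists: instead it uses the ordinary ideal $\mathcal K_B \subset \B(\rL^2(B)) \otm \K(\ell^2(\Gamma))$ (compact only in the $\ell^2(\Gamma)$ leg, with no ultraproduct built into it) and encodes the ultraproduct entirely in the element being conjugated. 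Concretely, for the first alternative one takes $Y \in N' \cap pM^\omega p$ with $\rE_{B^\omega \rtimes \Gamma}(Y) = 0$, normalizes so that $\rE_\omega(Y^*Y)$ is a projection $p_0$ (this needs a spectral projection and an inverse --- there is no trace, so your ``state built from $x \otimes x^*$'' does not literally make sense here), writes $Y = (y_n)^\omega$, and observes that the ucp map $\Psi(T) = \sigma\text{-weak}\lim_{n\to\omega} y_n^* T y_n$ annihilates $\mathcal K_B$ precisely because $\rE_B(b^* y_n a) \to 0$ $\ast$-strongly for all $a,b \in M$. The ``moreover'' part is then a \emph{separate} application of the relative condition (AO), this time for the inclusion $B \subset \mathcal B := B^\omega$ inside the crossed product $\mathcal M = B^\omega \rtimes \Gamma$ in its own standard form, conjugating by the net of unitaries in $A$ furnished by the negation of $A \preceq_{B^\omega\rtimes\Gamma} B^\omega$. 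Your plan collapses these two distinct applications into one, which is exactly the step you cannot carry out.

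There is a second gap: you pass from the $N$-central ucp map to amenability of a summand of $N$ via ``Connes/Haagerup-type characterizations'', but the standard route (minimal Stinespring dilation plus extension of the representation to $C^{**} \otm JCJ$) requires local reflexivity of the weakly dense $\rC^*$-algebra, and $B \red \Gamma$ is \emph{not} locally reflexive since it contains the von Neumann algebra $B$. The paper circumvents this with the notion of weak exactness of $B \red \Gamma$ in $B \rtimes \Gamma$ (Propositions 3.7 and Lemma 3.8), which is what actually produces the ucp extension $\B(\rL^2(\mathcal M)) \to (J^{\mathcal M}(B \red \Gamma)J^{\mathcal M} p)' \cap \B(p\rL^2(\mathcal M))$ and hence the norm-one projection onto a corner of $N$. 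Without addressing this point your argument already fails in the case $\mathcal B = B$.
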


We refer the reader to Section \ref{preliminaries} for ultraproduct von Neumann algebras and Popa's intertwining techniques inside arbitrary von Neumann algebras. The proof of Theorem \ref{thmA} given in Section \ref{section-thmA} (see Theorems \ref{theorem for thmA 1} and \ref{theorem for thmA 2}) uses a combination of Ozawa's $\rC^*$-algebraic techniques \cite{Oz03, Oz04, Is12}, ultraproduct von Neumann algebraic techniques \cite{Oc85, AH12} and the recent generalization of Popa's intertwining-by-bimodules to the framework of type ${\rm III}$ von Neumann algebras developed by the authors in \cite{HI15}. The interesting feature of the proof of Theorem \ref{thmA} is that it does {\em not} rely on Connes--Tomita--Takesaki modular theory. Indeed, unlike other instances of Popa's spectral gap rigidity results in the literature which typically rely on using amenable traces and hence require the ambient von Neumann algebra to be (semi)finite, we use instead unital completely positive (ucp) maps and exploit Ozawa's $\rC^*$-algebraic techniques \cite{Oz03, Oz04} to prove the existence of norm one projections. The main advantage of this approach is that it allows us to work directly inside the (possibly type ${\rm III}$) crossed product von Neumann algebra $M = B \rtimes \Gamma$ without appealing to the continuous core decomposition. In this respect, our approach is similar to the one we developed in our previous paper \cite{HI15}. We refer the reader to \cite{HR14, HU15, HV12, Is12, Is13} for other structural/rigidity results for type ${\rm III}$ factors involving the continuous core decomposition.

Following \cite{HR14, Oz04}, we say that a von Neumann algebra $M$ is $\omega$-{\em semisolid} if for any von Neumann subalgebra $N\subset M$ with expectation such that the relative commutant $N'\cap M^\omega$ has no type $\rm I$ direct summand, we have that $N$ is amenable. The next corollary strengthens the indecomposability properties of crossed product von Neumann algebras $B \rtimes \Gamma$ arising from arbitrary actions $\Gamma \curvearrowright B$ of bi-exact discrete groups on {\em abelian} von Neumann algebras (see \cite{Oz04, HV12, Is12} for previous results).

\begin{lettercor}\label{corB}
	Let $\Gamma$ be any bi-exact discrete group, $B$ any abelian $\sigma$-finite von Neumann algebra and $\Gamma \curvearrowright B$ any action. Let $\omega \in \beta(\N) \setminus \N$ be any nonprincipal ultrafilter. Then the crossed product von Neumann algebra $B\rtimes \Gamma$ is $\omega$-semisolid. 
	
In particular, if $B\rtimes \Gamma$ is a nonamenable factor, then $B\rtimes \Gamma$ is {\em prime}, that is, $B\rtimes \Gamma$ cannot be written as a tensor product $Q_1 \ovt Q_2$ of diffuse factors.
\end{lettercor}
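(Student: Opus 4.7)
Both assertions will be derived from Theorem~\ref{thmA} via the following key intertwining obstruction: any finite von Neumann subalgebra with expectation $A \subset B^\omega \rtimes \Gamma$ that has no type ${\rm I}$ direct summand satisfies $A \not\preceq_{B^\omega \rtimes \Gamma} B^\omega$. Indeed, such an intertwining would yield a nonzero normal $*$-homomorphism $\varphi \colon pAp \to qM_n(B^\omega)q$ for some projections $p \in A$, $q \in M_n(B^\omega)$ and some $n \geq 1$. Disintegrating $M_n(B^\omega) = M_n(\C) \ovt B^\omega$ over the spectrum of the abelian algebra $B^\omega$ exhibits $\varphi$ as a measurable field of $*$-homomorphisms $\varphi_x \colon pAp \to q(x) M_n(\C) q(x)$ into finite-dimensional matrix algebras. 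Since $pAp$ is finite with no type ${\rm I}$ direct summand, its only normal finite-dimensional quotient is $\{0\}$; hence $\varphi_x = 0$ almost everywhere, so $\varphi = 0$, a contradiction.

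For the $\omega$-semisolidity, the plan is to argue by contraposition. Suppose $N \subset M$ is with expectation and non-amenable, and let $z \in Z(N) \subset M$ be the central projection such that $Nz$ has no amenable direct summand (so $z \neq 0$). Applying Theorem~\ref{thmA} to $Nz \subset zMz$ with $p = z$ yields $(Nz)' \cap zM^\omega z \subset z(B^\omega \rtimes \Gamma) z$, together with the intertwining conclusion for finite von Neumann subalgebras with expectation. A routine computation using $z \in Z(N)$ identifies $(Nz)' \cap zM^\omega z = z(N' \cap M^\omega)$, which is the direct summand of $N' \cap M^\omega$ cut by the central projection $z$. It then suffices to show that $z(N' \cap M^\omega)$ is of type ${\rm I}$. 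If not, one extracts inside it a finite von Neumann subalgebra $A$ with expectation and no type ${\rm I}$ direct summand (i.e., a type ${\rm II}_1$ von Neumann algebra): this is immediate in the type ${\rm II}_1$ part, follows from the centralizer of a suitable state in the type ${\rm II}_\infty$ part, and in the type ${\rm III}$ part uses Takesaki's theorem applied to a normal faithful state with non-abelian centralizer. Theorem~\ref{thmA} then forces $A \preceq_{B^\omega \rtimes \Gamma} B^\omega$, contradicting the obstruction.

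For the primeness, suppose for contradiction that $M$ is a nonamenable factor decomposing as $M = Q_1 \ovt Q_2$ with $Q_1$ and $Q_2$ diffuse factors. Since amenability passes through tensor products, one of the $Q_i$, say $Q_1$, is non-amenable, and as a factor it has no amenable direct summand. Theorem~\ref{thmA} applied to $Q_1 \subset M$ then yields $Q_1' \cap M^\omega \subset B^\omega \rtimes \Gamma$ together with the intertwining conclusion. It suffices to find a ${\rm II}_1$ subfactor $A \subset Q_2$ with expectation, since then $A$ lies in $Q_1' \cap M^\omega$ (or an appropriate corner) with expectation, and Theorem~\ref{thmA} forces $A \preceq_{B^\omega \rtimes \Gamma} B^\omega$, contradicting the obstruction. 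Such a subfactor is produced by setting $A = Q_2$ when $Q_2$ is of type ${\rm II}_1$; by applying Theorem~\ref{thmA} instead to $eQ_1 \subset eMe$ for $e$ a finite projection in $Q_2$ when $Q_2$ is of type ${\rm II}_\infty$, noting that $(eQ_1)' \cap eMe = eQ_2e$ is a ${\rm II}_1$ factor; and by taking $A$ to be a hyperfinite ${\rm II}_1$ subfactor with expectation in $Q_2$ when $Q_2$ is of type ${\rm III}$.

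The hardest part will be the type ${\rm III}$ case of extracting a finite subalgebra with expectation and no type ${\rm I}$ direct summand, where one must invoke centralizer-of-state constructions (via Takesaki's theorem) or the existence of hyperfinite ${\rm II}_1$ subfactors with expectation in type ${\rm III}$ factors, rather than the elementary corner-by-finite-projection reduction available in the semifinite setting.
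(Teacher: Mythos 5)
Your overall strategy coincides with the paper's: cut $N$ by the central projection $z$ so that $Nz$ has no amenable direct summand, apply Theorem \ref{thmA} to $Nz\subset zMz$, identify $(Nz)'\cap zM^\omega z=z(N'\cap M^\omega)z$, locate a finite von Neumann subalgebra with expectation and no type ${\rm I}$ direct summand inside it, and contradict the resulting intertwining into the abelian algebra $B^\omega$. Two remarks. First, with the paper's Definition \ref{definition intertwining} there is no matrix amplification, so the obstruction needs no disintegration over the spectrum of the (highly non-separable) algebra $B^\omega$: the unital normal $\ast$-homomorphism $\theta : eAe\to fB^\omega f$ has abelian image isomorphic to $eAez$ for a central projection $z$ of $eAe$, which forces $z=0$ and contradicts unitality when $A$ has no type ${\rm I}$ direct summand. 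Second, your explicit treatment of primeness is correct and in fact necessary: the "in particular" does not follow formally from the $\omega$-semisolidity statement alone (one does not know a priori that $Q_1'\cap M^\omega$ has no type ${\rm I}$ direct summand), so returning to Theorem \ref{thmA} and feeding it a ${\rm II_1}$ subalgebra of $Q_2$ with expectation, as you do, is the right move.

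The one step you should not leave as a sketch is the extraction of a type ${\rm II_1}$ subalgebra with expectation inside a von Neumann algebra with no type ${\rm I}$ direct summand. "Takesaki's theorem applied to a normal faithful state with non-abelian centralizer" is not sufficient: a non-abelian centralizer can still be of type ${\rm I}$, so you need a faithful normal state whose centralizer has a type ${\rm II_1}$ direct summand, and the existence of such a state in a general type ${\rm III}$ (in particular type ${\rm III_0}$) algebra is a genuine theorem rather than a formal consequence of Takesaki's expectation criterion. This is exactly what the paper invokes: \cite[Corollary 8]{CS78} and \cite[Theorem 11.1]{HS90} produce a copy of the hyperfinite ${\rm II_1}$ factor with expectation in any von Neumann algebra with no type ${\rm I}$ direct summand, and you should cite this rather than improvise it. A smaller wrinkle comes from your contrapositive formulation: if only a proper central summand $z'(N'\cap M^\omega)$ with $z'<z$ fails to be of type ${\rm I}$, the algebra $A$ you extract is non-unital in $(Nz)'\cap zM^\omega z$, which sits slightly outside the literal statement of the second bullet of Theorem \ref{thmA}; the paper's direct formulation avoids this because, under the hypothesis that $N'\cap M^\omega$ has no type ${\rm I}$ direct summand, the whole corner $p(N'\cap M^\omega)p$ has none and the hyperfinite subfactor can be taken unital there.
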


Our second main result, Theorem \ref{thmC} below, is an answer to the general problem mentioned earlier in the case when the acting group is bi-exact. Indeed, using Theorem \ref{thmA} in the case when the action $\Gamma \curvearrowright B$ arises from a strongly ergodic free nonsingular action $\Gamma \curvearrowright (X, \mu)$ of a bi-exact countable discrete group on a standard probability space, we show that the group measure space factor $\rL^\infty(X) \rtimes \Gamma$ is full.

\begin{letterthm}\label{thmC}
Let $\Gamma$ be any bi-exact countable discrete group and $\Gamma \curvearrowright (X,\mu)$ any strongly ergodic free nonsingular action on a standard probability space. Then the group measure space factor $\rL^\infty(X)\rtimes \Gamma$ is full.
\end{letterthm}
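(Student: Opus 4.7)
The plan is to apply Theorem A directly to $M := \rL^\infty(X) \rtimes \Gamma$ (with $N = M$ and $p = 1$), and then to turn the essential freeness and the strong ergodicity of the action into two successive reductions of the central sequence algebra $M' \cap M^\omega$. Write $B := \rL^\infty(X)$. Strong ergodicity forces ergodicity, so together with essential freeness it makes $M$ a factor. One may moreover assume that $\Gamma$ is nonamenable: if $\Gamma$ were amenable, then by a standard Jones--Schmidt type result it would admit no strongly ergodic essentially free nonsingular action on a non-atomic standard probability space, so the theorem would be vacuous in the interesting range. In particular $M$ is a nonamenable factor and has no nonzero amenable direct summand, so the first alternative in Theorem A is excluded and we obtain
\begin{equation*}
    M' \cap M^\omega \;\subset\; B^\omega \rtimes \Gamma.
\end{equation*}

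Next I would push the central sequence algebra further down into $B^\omega$ itself. Given $x \in M' \cap M^\omega$, viewed as an element of $B^\omega \rtimes \Gamma$, expand it formally as a Fourier series $x = \sum_{g \in \Gamma} x_g u_g$ with coefficients $x_g \in B^\omega$ obtained from the canonical conditional expectation $B^\omega \rtimes \Gamma \to B^\omega$. Since $x$ commutes with every $f \in B \subset M$ and since $B^\omega$ is abelian, the coefficients satisfy $(f - \sigma_g(f))\, x_g = 0$ in $B^\omega$ for every $f \in B$ and every $g \in \Gamma$, where $\sigma$ denotes the canonical $\Gamma$-action on $B^\omega$. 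Essential freeness means that for each $g \neq e$ the set $\{x \in X : gx = x\}$ is null, and a standard support argument on bounded representing sequences (applied to a countable family of $f$'s generating $B$) then forces $x_g = 0$ for every $g \neq e$. Hence $x = x_e \in B^\omega$, and we conclude
\begin{equation*}
    M' \cap M^\omega \;\subset\; B^\omega.
\end{equation*}

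The final step uses strong ergodicity. Any $x \in M' \cap M^\omega \subset B^\omega$ still commutes with every canonical unitary $u_g$, $g \in \Gamma$, so $\sigma_g(x) = x$ for all $g$, i.e.\ $x \in (B^\omega)^\Gamma$. The operator-algebraic reformulation of strong ergodicity of $\Gamma \curvearrowright (X,\mu)$ is precisely $(B^\omega)^\Gamma = \C \cdot 1$, from which $M' \cap M^\omega = \C \cdot 1$ and $M$ is full.

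The main obstacle I expect is the careful handling of the Ocneanu ultrapower $B^\omega$ in the non-tracial, possibly type ${\rm III}$ setting. In the second step one must justify the Fourier decomposition inside $B^\omega \rtimes \Gamma$ and the essential-freeness argument directly from the definition of the Ocneanu ultrapower, without recourse to a trace-preserving $\rL^2$-argument; and in the final step one must verify that the classical probabilistic formulation of strong ergodicity matches the triviality of the $\Gamma$-fixed point algebra of $B^\omega$. Both translations are by now standard and should be available among the preliminaries developed earlier in the paper (ultraproduct von Neumann algebras and Popa's intertwining in the type ${\rm III}$ setting); beyond them, the only substantive input is Theorem A.
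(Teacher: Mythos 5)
Your proof is correct, but it takes a genuinely different route from the paper's. The paper argues by contradiction: assuming $M$ is not full, it invokes a standalone lemma (Lemma \ref{lem-strong-ergodicity}, a nonsingular generalization of Hoff's argument, proved via the full group of the orbit equivalence relation and a Zorn's lemma maximality argument) to produce a unitary $u \in \mathcal U(M_\omega)$ with $\rE_{B^\omega}(u\lambda_s)=0$ for all $s\in\Gamma$, hence $\rE_{B^\omega\rtimes\Gamma}(u)=0$; Theorem \ref{thmA} then forces $u\in\mathcal U(B^\omega\rtimes\Gamma)$, so that $u=\rE_{B^\omega\rtimes\Gamma}(u)=0$, a contradiction. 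You instead compute $M'\cap M^\omega$ directly in three reductions: Theorem \ref{thmA} gives $M'\cap M^\omega\subset B^\omega\rtimes\Gamma$; essential freeness gives $B'\cap(B^\omega\rtimes\Gamma)=B^\omega$ via the Fourier-coefficient identity $(f-\sigma_g(f))x_g=0$; and strong ergodicity gives $(B^\omega)^\Gamma=\C 1$. Both reductions are sound. For the freeness step, the cleanest way to see that $\bigwedge_k\bigl(1-\mathrm{supp}(f_k-\sigma_g(f_k))\bigr)$ remains zero when computed in $B^\omega$ (and not merely in $B$) is to apply the faithful normal conditional expectation $\rE_\omega:B^\omega\to B$ to any subordinate projection, rather than to manipulate bounded representing sequences; with that phrasing the "standard support argument" closes without any diagonal extraction. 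What the paper's route buys is Lemma \ref{lem-strong-ergodicity} itself, which holds for arbitrary strongly ergodic nonsingular equivalence relations, uses neither bi-exactness nor Theorem \ref{thmA}, and is of independent interest; your route is shorter and more self-contained for the theorem at hand. Note finally that both arguments must rule out the first alternative of Theorem \ref{thmA}, i.e.\ must know that $M$ is a nonamenable factor: the paper asserts this without comment, while your appeal to the Schmidt/Jones--Schmidt theorem (an amenable group admits no strongly ergodic nonsingular action on a diffuse standard probability space, and the atomic case yields a full type ${\rm I}$ factor) is the right way to dispose of it.
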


The proof of Theorem \ref{thmC} uses a combination of Theorem \ref{thmA} and the useful Lemma \ref{lem-strong-ergodicity} below which proves the existence of a nontrivial centralizing sequence $(u_n)_n$ in every nonfull factor $M = \rL(\mathcal R)$ arising from a strongly ergodic nonsingular equivalence relation $\mathcal R$ defined on a standard probability space such that $(u_n)_n$ ``does not embed" into the Cartan subalgebra $\rL^\infty(X)$. Our Lemma 5.1 is a nonsingular generalization of a recent result of Hoff (see the first part of the proof of \cite[Proposition C]{Ho15}). In view of Choda's result \cite{Ch81}, we do not know whether Theorem \ref{thmC} holds true more generally for (arbitrary strongly ergodic free nonsingular actions of) arbitrary non-inner amenable groups instead of bi-exact groups. We point out that Ozawa recently showed in \cite{Oz16} that Theorem \ref{thmC} holds true for arbitrary strongly ergodic free nonsingular actions of $\SL_3(\Z)$, which is not bi-exact by \cite{Sa09}.

We finally exploit recent results of Boutonnet--Ioana--Salehi Golsefidy \cite{BISG15} to construct, for every $0 < \lambda \leq 1$, examples of type ${\rm III_\lambda}$ strongly ergodic free nonsingular actions of a free group on a standard probability space. It is shown in \cite[Theorem A]{BISG15} that for any (not necessarily compact) connected simple Lie group and any countable dense subgroup $\Lambda < G$ with ``algebraic entries" (e.g.\ $(\Lambda < G) = (\SL_n(\Q) < \SL_n(\R))$ for $n \geq 2$), the left translation action $\Lambda \curvearrowright G$ is strongly ergodic. By taking a suitable non-unimodular closed subgroup $P < G$, the quotient action $\Lambda \curvearrowright G/P$ is still strongly ergodic and of type ${\rm III}$. The quotient action $\Lambda \curvearrowright G/P$ need not be essentially free in general. However, using a ``direct product" construction similar to the one used in \cite[Corollary B]{HV12}, we can then construct strongly ergodic essentially free nonsingular actions of free groups and we obtain the following corollary.

\begin{lettercor}\label{corD}
For every $0 < \lambda \leq 1$, there exists a strongly ergodic free nonsingular action $\F_\infty \curvearrowright (X_\lambda, \mu_\lambda)$ of type ${\rm III_\lambda}$ so that the
 group measure space factor $\rL^\infty(X_\lambda, \mu_\lambda) \rtimes \F_\infty$ is of type ${\rm III_\lambda}$ and is full.
 
Moreover, there exists a strongly ergodic free nonsingular action $\F_\infty \curvearrowright (X_\infty, \mu_\infty)$ of type ${\rm II_\infty}$ so that the
 group measure space factor $\rL^\infty(X_\infty, \mu_\infty) \rtimes \F_\infty$ is of type ${\rm II_\infty}$ and is full.
\end{lettercor}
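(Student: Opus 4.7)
The strategy is to combine the strong ergodicity theorem of Boutonnet--Ioana--Salehi Golsefidy \cite{BISG15} with an essential-freeness trick via diagonal products, and then to invoke Theorem~\ref{thmC}.

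First, for each $\lambda \in (0,1]$, I would fix a connected simple real Lie group $G_\lambda$ together with a closed subgroup $P_\lambda < G_\lambda$ so that the natural nonsingular action of $G_\lambda$ on $G_\lambda/P_\lambda$, equipped with the quasi-invariant measure class, is of type $\mathrm{III}_\lambda$; concrete choices are standard (take $P_\lambda$ a suitable parabolic/solvable subgroup of a $\mathrm{PSL}_2$-type factor, adjusting $P_\lambda$ so that the ratio of modular functions $\Delta_{P_\lambda}/\Delta_{G_\lambda}|_{P_\lambda}$ generates $\lambda^{\Z}$ in $\R_{>0}$, with $\lambda = 1$ corresponding to a dense range). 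For the additional type $\mathrm{II}_\infty$ assertion, I would instead take $P$ unimodular in $G$ with infinite covolume. Inside $G_\lambda$, I choose a countably infinite dense free subgroup whose generators are ``with algebraic entries'' in the sense of \cite{BISG15}; such a subgroup exists by the Tits alternative applied to the $\Q$-points of $G_\lambda$, and I identify it with $\F_\infty$. By \cite[Theorem~A]{BISG15}, the translation action $\F_\infty \curvearrowright G_\lambda$ is strongly ergodic, and strong ergodicity descends to the quotient, so $\F_\infty \curvearrowright G_\lambda/P_\lambda$ is strongly ergodic of the prescribed type.

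Second, the quotient action $\F_\infty \curvearrowright G_\lambda/P_\lambda$ is typically not essentially free, since the point stabilizers are positive-dimensional. To repair this, I take any free probability measure preserving strongly ergodic weakly mixing action of $\F_\infty$, for instance the Bernoulli shift $\F_\infty \curvearrowright (Y,\nu) := ([0,1],\Leb)^{\F_\infty}$, and form the diagonal action on $(X_\lambda,\mu_\lambda) := (G_\lambda/P_\lambda \times Y,\mu \otimes \nu)$. This diagonal action is essentially free since its second coordinate is; it remains strongly ergodic, since the product of a strongly ergodic action with a strongly ergodic weakly mixing action is strongly ergodic; and, since the $Y$-factor is pmp, it contributes nothing to the flow of weights, so the Murray--von Neumann type of the group measure space factor attached to the diagonal action coincides with that of $\rL^\infty(G_\lambda/P_\lambda) \rtimes \F_\infty$, i.e., $\mathrm{III}_\lambda$ (respectively $\mathrm{II}_\infty$).

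Finally, since $\F_\infty$ is bi-exact, Theorem~\ref{thmC} applied to the diagonal action yields that $\rL^\infty(X_\lambda,\mu_\lambda) \rtimes \F_\infty$ is full, which completes both parts of the proof. The main obstacle is the bookkeeping of types and flows of weights: one must carefully choose $(G_\lambda,P_\lambda)$ so that the modular contribution produces precisely type $\mathrm{III}_\lambda$ (and $\mathrm{II}_\infty$ in the unimodular case with infinite covolume), and one must verify that the diagonal product with a pmp action does not alter the type while preserving strong ergodicity. Once the input action is shown to be essentially free, strongly ergodic, and nonsingular of the prescribed type, the fullness conclusion is immediate from Theorem~\ref{thmC}.
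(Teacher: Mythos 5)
Your overall architecture (the Boutonnet--Ioana--Salehi Golsefidy theorem for a dense subgroup of a simple Lie group acting on a homogeneous space, a diagonal product with a free pmp Bernoulli action to restore essential freeness, then Theorem~\ref{thmC}) is the same as the paper's, but two of the steps you defer to ``bookkeeping'' are precisely where the work lies, and as stated they are gaps.

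First, the strong ergodicity of the diagonal action. You invoke the principle that ``the product of a strongly ergodic action with a strongly ergodic weakly mixing action is strongly ergodic.'' This is not a theorem: strong ergodicity is notoriously unstable under diagonal products, and in the nonsingular setting weak mixing of the pmp factor buys you nothing by itself. What is actually needed is a \emph{spectral gap} statement that forces asymptotically invariant sequences for the product to localize onto the $G/P$-coordinate. The paper achieves this by choosing a surjection $\pi : \F_\infty \to \SL_n(\Q)$ with \emph{nonamenable kernel} and letting $\F_\infty$ act on $G/P$ through $\pi$ while acting honestly on the Bernoulli space $X$ (so no dense free subgroup of $G$ is needed at all); then $\rL^\infty(X) \rtimes \ker(\pi)$ is a full ${\rm II_1}$ factor by \cite{Ch81}, Connes's theorem \cite[Theorem 2.1]{Co75} gives $N' \cap (A \ovt B)^\omega \subset B^\omega$, and \cite[Theorem A]{BISG15} kills $(B^\omega)^{\Lambda}$. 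Your version is probably repairable (Bernoulli shifts of $\F_\infty$ do have spectral gap), but the justification you give is not valid, and you would additionally have to produce a dense copy of $\F_\infty$ in $G$ with algebraic entries to which \cite{BISG15} applies, which the paper's quotient trick avoids.

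Second, the production of type ${\rm III_\lambda}$ for $0 < \lambda < 1$ by choosing $P_\lambda$ so that $\Delta_{P_\lambda}(P_\lambda) = \lambda^{\Z}$ does not work as described: for the relevant closed subgroups of a connected simple Lie group the image of the modular homomorphism is a connected subgroup of $\R^*_+$ (hence $\{1\}$ or all of $\R^*_+$), and in any case the Murray--von Neumann type is governed by the ergodic decomposition of the Maharam extension of the $\Lambda$-action, not by $\Delta_P(P)$ alone. The paper's route is to first establish type ${\rm III_1}$ using the minimal parabolic (the Maharam extension of $\Gamma \curvearrowright X \times G/P$ is conjugate to $\Gamma \curvearrowright X \times G/\ker(\Delta_P)$, a strongly ergodic quotient of $\Gamma \curvearrowright X \times G$), and then to pass to the circle quotient $X \times G/P \times \R/(T\Z)$ with $T = 2\pi/|\log\lambda|$: this is again strongly ergodic as a quotient of the Maharam extension, its crossed product is identified with the discrete decomposition $M \rtimes_{\sigma^\varphi_T} \Z$, hence is type ${\rm III_\lambda}$, and fullness again follows from Theorem~\ref{thmC}. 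The ${\rm II_\infty}$ example is likewise the Maharam extension itself rather than a new unimodular $P$ of infinite covolume. You should incorporate this reduction from the ${\rm III_1}$ case rather than trying to engineer the type through the choice of $P_\lambda$.
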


The first examples of full factors of type ${\rm III}$ were discovered by Connes in \cite{Co74}. He showed that the factors $$M_{n,k, \varphi} = \left( \overline{\bigotimes}_{\F_n} (\mathbf M_k(\C), \varphi) \right) \rtimes \F_n$$ 
arising from Connes--St\o rmer Bernoulli shifts of free groups $\F_n \curvearrowright \overline{\bigotimes}_{\F_n} (\mathbf M_k(\C), \varphi)$ are full if $n, k \geq 2$ and of type ${\rm III}$ if $\varphi$ is not tracial. Observe that the factors $M_{n,k, \varphi}$ possess $\overline{\bigotimes}_{\F_n} \C^k$ as a Cartan subalgebra. This implies that the underlying ergodic nonsingular equivalence relation is strongly ergodic \cite{FM75}. However, Connes--St\o rmer Bernoulli crossed products need not be $\ast$-isomorphic to group measure space factors. In this respect, Corollary \ref{corD} above provides the first class of group measure space type ${\rm III}$ factors with no nontrivial central sequence.

We finally point out that the group measure space type ${\rm III}$ factors in Corollary \ref{corD} possess a unique Cartan subalgebra, up to unitary conjugacy, by \cite[Theorem A]{HV12} (see \cite{PV11} for the trace preserving case). Moreover, Corollary \ref{corD} provides new examples of group measure type ${\rm III}$ factors with a unique Cartan subalgebra, up to unitary conjugacy. Indeed, the examples of ergodic free nonsingular actions considered in \cite[Corollary B]{HV12} are not strongly ergodic since they have an amenable action as a quotient. In particular, the group measure space type ${\rm III}$ factors in \cite[Corollary B]{HV12} are not full while the ones in Corollary \ref{corD} are full.

\subsection*{Acknowledgments} It is our pleasure to thank Adrian Ioana, Dimitri Shlyakhtenko, Yoshimichi Ueda and Stefaan Vaes for their valuable comments.

\tableofcontents

\section{Preliminaries}\label{preliminaries}

For any von Neumann algebra $M$, we will denote by $\mathcal Z(M)$ the centre of $M$, by $\mathcal U(M)$ the group of unitaries in $M$ and by $(M, \rL^2(M), J^M, \mathfrak P^M)$ a standard form for $M$. We will say that an inclusion of von Neumann algebras $P \subset 1_P M 1_P$ is {\em with expectation} if there exists a faithful normal conditional expectation $\rE_P : 1_P M 1_P \to P$. 

\subsection*{Crossed product von Neumann algebras}

We will use the following terminology and notation regarding crossed product von Neumann algebras. 
Let $\Gamma$ be any discrete group, $B$ any $\sigma$-finite von Neumann algebra and $\Gamma \curvearrowright B$ any action. Denote by $M:= B\rtimes \Gamma$ the corresponding {\em crossed product} von Neumann algebra and by $\rE_{B} : M \to B$ the canonical faithful normal conditional expectation given by $\rE_B(b \lambda_g) = \delta_{g, e} b$ for all $g \in \Gamma$ and all $b \in B$. Fix a standard form $(B, \rL^2(B), J^B, \mathfrak P^B)$ for $B$. Denote by $u : \Gamma \to \mathcal U(\rL^2(B))$ the canonical unitary representation implementing the action $\Gamma \curvearrowright B$. A standard form $(M, \rL^2(M), J^M, \mathfrak P^M)$ for $M$ is given by $\rL^2(M) = \rL^2(B) \otimes \ell^2(\Gamma)$ and 
$$J^M (\xi \otimes \delta_g) = u_g^* J^B \xi \otimes \delta_{g^{-1}} \quad \text{for all } \xi  \in \rL^2(B) \text{ and all } g \in \Gamma.$$
The Jones projection $e_B : \rL^2(M) \to \rL^2(B)$ is then simply given by $e_B = 1 \otimes P_{\C \delta_e}$ where $P_{\C \delta_e} : \ell^2(\Gamma) \to \C \delta_e$ is the orthogonal projection onto $\C \delta_e$. For crossed product von Neumann algebras $M = B \rtimes \Gamma$, we will always use such a standard form $(M, \rL^2(M), J^M, \mathfrak P^M)$ as defined above.

\subsection*{Ultraproduct von Neumann algebras}

Let $M$ be any $\sigma$-finite von Neumann algebra and $\omega \in \beta(\N) \setminus \N$ any nonprincipal ultrafilter. Define
\begin{align*}
\mathcal I_\omega(M) &= \left\{ (x_n)_n \in \ell^\infty(M) \mid x_n \to 0\ \ast\text{-strongly as } n \to \omega \right\} \\
\mathcal M^\omega(M) &= \left \{ (x_n)_n \in \ell^\infty(M) \mid  (x_n)_n \, \mathcal I_\omega(M) \subset \mathcal I_\omega(M) \text{ and } \mathcal I_\omega(M) \, (x_n)_n \subset \mathcal I_\omega(M)\right\}.
\end{align*}
The {\em multiplier algebra} $\mathcal M^\omega(M)$ is a C$^*$-algebra and $\mathcal I_\omega(M) \subset \mathcal M^\omega(M)$ is a norm closed two-sided ideal. Following \cite[\S 5.1]{Oc85}, we define the {\em ultraproduct von Neumann algebra} $M^\omega$ by $M^\omega := \mathcal M^\omega(M) / \mathcal I_\omega(M)$, which is indeed known to be a von Neumann algebra. We denote the image of $(x_n)_n \in \mathcal M^\omega(M)$ by $(x_n)^\omega \in M^\omega$. 

For every $x \in M$, the constant sequence $(x)_n$ lies in the multiplier algebra $\mathcal M^\omega(M)$. We will then identify $M$ with $(M + \mathcal I_\omega(M))/ \mathcal I_\omega(M)$ and regard $M \subset M^\omega$ as a von Neumann subalgebra. The map $\rE_\omega : M^\omega \to M : (x_n)^\omega \mapsto \sigma \text{-weak} \lim_{n \to \omega} x_n$ is a faithful normal conditional expectation. For every faithful state $\varphi \in M_\ast$, the formula $\varphi^\omega := \varphi \circ \rE_\omega$ defines a faithful normal state on $M^\omega$. Observe that $\varphi^\omega((x_n)^\omega) = \lim_{n \to \omega} \varphi(x_n)$ for all $(x_n)^\omega \in M^\omega$. 

Following \cite[\S2]{Co74}, we define
$$\mathcal M_\omega(M) := \left\{ (x_n)_n \in \ell^\infty(M) \mid \lim_{n \to \omega} \|x_n \varphi - \varphi x_n\| = 0, \forall \varphi \in M_\ast \right\}.$$
We have $\mathcal I_\omega (M) \subset \mathcal M_\omega(M) \subset \mathcal M^\omega(M)$. The {\em asymptotic centralizer} is defined by $M_\omega := \mathcal M_\omega(M)/\mathcal I_\omega(M)$. We have $M_\omega \subset M^\omega$. Moreover, by \cite[Proposition 2.8]{Co74} (see also \cite[Proposition 4.35]{AH12}), we have $M_\omega = M' \cap (M^\omega)^{\varphi^\omega}$ for every faithful state $\varphi \in M_\ast$.

Let $Q \subset M$ be any von Neumann subalgebra with faithful normal conditional expectation $\rE_Q : M \to Q$. Choose a faithful state $\varphi \in M_\ast$ in such a way that $\varphi = \varphi \circ \rE_Q$. We have $\ell^\infty(Q) \subset \ell^\infty(M)$, $\mathcal I_\omega(Q) \subset \mathcal I_\omega(M)$ and $\mathcal M^\omega(Q) \subset \mathcal M^\omega(M)$. We will then identify $Q^\omega = \mathcal M^\omega(Q) / \mathcal I_\omega(Q)$ with $(\mathcal M^\omega(Q) + \mathcal I_\omega(M)) / \mathcal I_\omega(M)$ and be able to regard $Q^\omega \subset M^\omega$ as a von Neumann subalgebra. Observe that the norm $\|\cdot\|_{(\varphi |_Q)^\omega}$ on $Q^\omega$ is the restriction of the norm $\|\cdot\|_{\varphi^\omega}$ to $Q^\omega$. Observe moreover that $(\rE_Q(x_n))_n \in \mathcal I_\omega(Q)$ for all $(x_n)_n \in \mathcal I_\omega(M)$ and $(\rE_Q(x_n))_n \in \mathcal M^\omega(Q)$ for all $(x_n)_n \in \mathcal M^\omega(M)$. Therefore, the mapping $\rE_{Q^\omega} : M^\omega \to Q^\omega : (x_n)^\omega \mapsto (\rE_Q(x_n))^\omega$ is a well-defined conditional expectation satisfying $\varphi^\omega \circ \rE_{Q^\omega} = \varphi^\omega$. Hence, $\rE_{Q^\omega} : M^\omega \to Q^\omega$ is a faithful normal conditional expectation. For more on ultraproduct von Neumann algebras, we refer the reader to \cite{AH12, Oc85}.

We record the following observation that will be used throughout. Let $\Gamma$ be any discrete group, $B$ any $\sigma$-finite von Neumann algebra and $\Gamma \curvearrowright B$ any action. Put $M := B \rtimes \Gamma$ and denote by $\rE_B : M \to B$ the canonical faithful normal conditional expectation. Choose any faithful state $\varphi \in M_\ast$ such that $\varphi \circ \rE_B = \varphi$. Then the von Neumann subalgebra $B^\omega \vee M \subset M^\omega$ is globally invariant under the modular automorphism group $\sigma^{\varphi^\omega}$ and hence is with expectation. Observe that we have $B^\omega \vee M = B^\omega \rtimes \Gamma$ canonically. Therefore the von Neumann subalgebra $B^\omega \rtimes \Gamma \subset M^\omega$ is with expectation. Denote by $\rE_{B^\omega} : M^\omega \to B^\omega$ and by $\rE_{B^\omega \rtimes \Gamma} : M^\omega \to B^\omega \rtimes \Gamma$ the unique $\varphi^\omega$-preserving conditional expectations. By uniqueness of the $\varphi^\omega$-preserving conditional expectation $\rE_{B^\omega} : M^\omega \to B^\omega$, we have $\rE_{B^\omega} \circ \rE_{B^\omega \rtimes \Gamma} = \rE_{B^\omega}$.

We thank Hiroshi Ando for pointing out to us the following well-known result.

\begin{lem}\label{lem-ultraproduct}
Let $M$ be any $\sigma$-finite von Neumann algebra and $\omega \in \beta(\N) \setminus \N$ any nonprincipal ultrafilter. For any $u \in \mathcal U(M^\omega)$, there exists a sequence $(u_n)_n \in \mathcal M^\omega(M)$ such that $u = (u_n)^\omega$ and $u_n \in \mathcal U(M)$ for every $n \in \N$.
\end{lem}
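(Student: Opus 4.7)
The plan is to realize $u$ as the exponential of a self-adjoint element of $M^\omega$, then lift this self-adjoint element coordinatewise to a bounded self-adjoint sequence in $\mathcal M^\omega(M)$, and finally exponentiate pointwise to obtain a sequence of genuine unitaries in $M$. The attraction of this route (as opposed to polar-decomposing an arbitrary lift $(x_n)_n$ of $u$) is that it sidesteps the partial-isometry obstruction: extending the partial isometries of a polar decomposition to unitaries of $M$ would require an equivalence $1 - v_n^* v_n \sim 1 - v_n v_n^*$ in $M$, which is not guaranteed for a general $\sigma$-finite (possibly type III) $M$.

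The two ingredients I assemble are as follows. First, since $M^\omega$ is a von Neumann algebra and $u \in \mathcal U(M^\omega)$, Borel functional calculus through the principal branch of $\log$ yields a self-adjoint $h \in M^\omega$ with spectrum contained in $[-\pi, \pi]$ and $u = \exp(i h)$. Second, because $\mathcal I_\omega(M)$ is a $*$-ideal (the $*$-strong topology is $*$-invariant), $\mathcal M^\omega(M)$ is a unital $C^*$-subalgebra of $\ell^\infty(M)$, and the quotient map $q : \mathcal M^\omega(M) \to M^\omega$ is a surjective $*$-homomorphism of $C^*$-algebras. Take any lift $(y_n)_n \in \mathcal M^\omega(M)$ of $h$, replace it by its self-adjoint part $((y_n + y_n^*)/2)_n$, and apply the continuous truncation $f(t) := \max(-\pi, \min(\pi, t))$ inside the $C^*$-algebra $\mathcal M^\omega(M)$. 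This gives a self-adjoint $(h_n)_n \in \mathcal M^\omega(M)$ with $\|h_n\| \leq \pi$ for every $n$ and $(h_n)^\omega = f(h) = h$. Then set $u_n := \exp(i h_n) \in \mathcal U(M)$; applying continuous functional calculus once more, $(u_n)_n \in \mathcal M^\omega(M)$ and $(u_n)^\omega = \exp(i h) = u$, as required.

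The one technical point to verify carefully, and the main (if mild) obstacle, is that continuous functional calculus on a self-adjoint element of the $C^*$-algebra $\mathcal M^\omega(M)$ is computed coordinatewise in $\ell^\infty(M)$ and commutes with the quotient map $q$. Concretely: whenever $(a_n)_n \in \mathcal M^\omega(M)$ is self-adjoint with $\sup_n \|a_n\| < \infty$ and $g$ is continuous on a compact interval containing all the spectra involved, one has $(g(a_n))_n \in \mathcal M^\omega(M)$ and $(g(a_n))^\omega = g((a_n)^\omega)$. For polynomials this is immediate, since $\mathcal M^\omega(M) \subset \ell^\infty(M)$ is a $*$-subalgebra and $q$ is a $*$-homomorphism. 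The general case follows by uniform polynomial approximation on the interval, using that $\|q\| \leq 1$ and $\|g(a) - p(a)\| \leq \|g - p\|_\infty$ for any self-adjoint $a$ with spectrum inside the interval. Isolating this observation as a short preliminary claim makes the three-step argument above into a clean proof.
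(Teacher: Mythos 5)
Your argument is correct and follows essentially the same route as the paper's proof: write $u = \exp(\mathrm{i}h)$ for a self-adjoint logarithm $h$, lift $h$ to a self-adjoint sequence in $\mathcal M^\omega(M)$, and exponentiate coordinatewise, with Stone--Weierstrass/polynomial approximation justifying that continuous functional calculus passes through the quotient map. Your extra truncation of the lifted sequence to norm at most $\pi$ is a harmless (and in fact slightly more careful) refinement of the same argument, not a different approach.
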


\begin{proof}
Denote by $f : \mathbf T \to (- \pi, \pi]$ the unique Borel function such that $\exp({\rm i} f(z)) = z$ for all $z \in \mathbf T$. Let $u \in \mathcal U(M^\omega)$ and put $h = f(u) \in M^\omega$. Then $h^* = h$ and $\exp({\rm i} h) = u$. Write $h = (h_n)^\omega$ for some $(h_n)_n \in \mathcal M^\omega(M)$. Since $h^* = h$, up to replacing each $h_n$ by $\frac12(h_n +h_n^*)$, we may assume that $h_n^* = h_n$ for every $n \in \N$. Put $u_n = \exp({\rm i} h_n) \in \mathcal U(M)$ for every $n \in \N$. Since $[-\pi, \pi] \to \mathbf T : t \mapsto \exp({\rm i} t)$ is a continuous function, it is a uniform limit of polynomial functions by Stone--Weierstrass theorem. It follows that $(u_n)_n = (\exp({\rm i} h_n))_n = \exp({\rm i} (h_n)_n) \in \mathcal M^\omega(M)$ and $u = \exp({\rm i} h) = \exp({\rm i} (h_n)^\omega) = (\exp({\rm i} h_n))^\omega = (u_n)^\omega$.
\end{proof}

	We next recall the construction of the Groh--Raynaud ultraproduct. For any Hilbert space $H$, define the {\em ultraproduct Hilbert space} $H_\omega$ as the completion/separation of $\ell^\infty(H)$ with respect to the semi-inner product given by $\langle (\xi_n)_n, (\eta_n)_n \rangle := \lim_{n\to \omega} \langle \xi_n, \eta_n \rangle_H$ for all $(\xi_n)_n, (\eta_n)_n \in \ell^\infty(H)$. We denote the image of $(\xi_n)_n\in \ell^\infty(H)$ by $(\xi_n)_\omega \in H_\omega$. Let $M\subset \B(H)$ be any von Neumann algebra. We define the unital $\ast$-representation $\pi^\omega : \ell^\infty(M) \to \B(H_\omega)$  by 
	$$ \pi^\omega((x_n)_n) (\xi_n)_\omega = ( x_n \xi_n)_\omega \quad \text{for all }  (x_n)_n\in \ell^\infty(\B(H))  \text{ and all }  (\xi_n)_n \in \ell^\infty(H). $$
Let $(M, \rL^2(M), J^M, \mathfrak P^M)$ be a standard form for $M$. The {\em Groh--Raynaud ultraproduct} $N := \prod^\omega M$ is the von Neumann algebra generated by $\pi^\omega(\ell^\infty(M))$. It is known that the inclusion $N \subset \B(\rL^2(M)_\omega)$ is in standard form with  modular conjugation given by $J^N (\xi_n)_\omega := (J^M\xi_n)_\omega$ for all $(\xi_n)_\omega\in \rL^2(M)_\omega$ (see \cite[Corollary 3.9]{Ra99} and \cite[Theorem 3.18]{AH12}). By \cite[Theorem 3.7]{AH12}, the Ocneanu ultraproduct is $\ast$-isomorphic to a corner of the Groh--Raynaud ultraproduct. More precisely, for any faithful state $\varphi \in M_\ast$, denote by $\xi_\varphi \in \mathfrak P^M$ the canonical representing vector. Then the isometry given by
$$w_\varphi : \rL^2(M^\omega) \to \rL^2(M)_\omega : (x_n)^\omega \xi_{\varphi^\omega} \mapsto (x_n\xi_\varphi)_\omega$$
satisfies $w_\varphi^* N w_\varphi = M^\omega$. Define the ultraproduct state $\varphi_\omega = \langle \, \cdot \, (\xi_\varphi)_\omega, (\xi_\varphi)_\omega\rangle \in N_\ast$ and denote by $p \in N$ the support projection of $\varphi_\omega \in N_\ast$. We have $w_\varphi w_\varphi^* = pJ^N p J^N$. Then the condition $w_\varphi^* N w_\varphi = M^\omega$ implies that $pJ^NpJ^N \, N \, p J^NpJ^N\cong pNp\cong M^\omega$ so that the standard representation of $M^\omega$ is given by $\rL^2(M^\omega) = pJ^N pJ^N \, \rL^2(M)_\omega$ with modular conjugation $J^{M^\omega} = p J^N p$.

\begin{lem}\label{lemma for AO in ultraproduct}
	Let $B\subset M$ be any inclusion of $\sigma$-finite von Neumann algebras with faithful normal conditional expectation $\rE_B : M \to B$. Denote by $e_B : \rL^2(M) \to \rL^2(B)$ the corresponding Jones projection. Denote by $N := \prod^\omega M$ the Groh--Raynaud ultraproduct. Let $\varphi \in M_\ast$ be any faithful state such that $\varphi\circ \rE_B=\varphi$ and denote by $p \in N$ the support projection of $\varphi_\omega \in N_\ast$.
	 
Then $\pi^\omega( (e_B)_n )$ commutes with $p$ and $J^N$.
\end{lem}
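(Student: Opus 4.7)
The plan is to reduce both assertions to the base-level commutation $J^M e_B = e_B J^M$, lift the first assertion directly, and handle the second by treating the symmetric partner $J^N p J^N$ instead of $p$ itself.

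The base-level commutation $J^M e_B = e_B J^M$ is a standard consequence of the hypothesis $\varphi = \varphi \circ \rE_B$: the modular automorphism group $\sigma^\varphi$ then preserves $B$, the canonical isometric embedding $\rL^2(B,\varphi|_B)\hookrightarrow\rL^2(M,\varphi)$ with range $\overline{B\xi_\varphi}$ intertwines the modular conjugations, and $J^M$ therefore leaves the range of $e_B$ invariant. The commutation of $J^N$ with $\pi^\omega((e_B)_n)$ is then immediate: both antilinear operators, applied to an arbitrary $(\xi_n)_\omega \in \rL^2(M)_\omega$, yield $(J^M e_B \xi_n)_\omega = (e_B J^M \xi_n)_\omega$ by the base-level identity.

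The delicate part is commutation with $p$. The main obstacle is that $\pi^\omega((e_B)_n)$ lies neither in $N$ (because $e_B \notin M$) nor in $N'$ (it does not commute with general $\pi^\omega((x_n)_n)$ for $(x_n)_n \in \ell^\infty(M)$), so commutation with $p \in N$ is not automatic. The idea is to deal first with $J^N p J^N \in N'$. Since $p$ is the support of $\varphi_\omega = \langle\,\cdot\,(\xi_\varphi)_\omega, (\xi_\varphi)_\omega\rangle$ in $N$, the standard description of the support projection of a vector state in a standard form identifies $p$ with the orthogonal projection onto $\overline{N'(\xi_\varphi)_\omega}$; combined with $J^N(\xi_\varphi)_\omega = (\xi_\varphi)_\omega$ (which holds because $\xi_\varphi \in \mathfrak P^M$), this shows that $J^N p J^N$ is the orthogonal projection onto $\overline{N(\xi_\varphi)_\omega}$. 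For any $(x_n)_n \in \ell^\infty(M)$, a direct computation using $e_B x\xi_\varphi = \rE_B(x)\xi_\varphi$ gives
$$\pi^\omega((e_B)_n)\,\pi^\omega((x_n)_n)\,(\xi_\varphi)_\omega = (e_B x_n \xi_\varphi)_\omega = (\rE_B(x_n)\xi_\varphi)_\omega = \pi^\omega((\rE_B(x_n))_n)(\xi_\varphi)_\omega,$$
which lies in $N(\xi_\varphi)_\omega$ since $(\rE_B(x_n))_n \in \ell^\infty(B) \subset \ell^\infty(M)$. By $\sigma$-weak density of $\pi^\omega(\ell^\infty(M))$ in $N$ and boundedness of $\pi^\omega((e_B)_n)$, this operator preserves the closed subspace $\overline{N(\xi_\varphi)_\omega}$, and being self-adjoint it commutes with the projection $J^N p J^N$.

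To conclude, I conjugate the identity $\pi^\omega((e_B)_n)\,(J^N p J^N) = (J^N p J^N)\,\pi^\omega((e_B)_n)$ by $J^N$ and invoke the $J^N$-commutation (which gives $J^N \pi^\omega((e_B)_n) J^N = \pi^\omega((e_B)_n)$) to obtain $\pi^\omega((e_B)_n)\,p = p\,\pi^\omega((e_B)_n)$, as required.
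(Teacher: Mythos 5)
Your proof is correct and takes essentially the same route as the paper: both arguments reduce everything to the base identity $e_B J^M = J^M e_B$ and to the fact that $e_B x \xi_\varphi = \rE_B(x)\xi_\varphi$ forces $\pi^\omega((e_B)_n)$ to preserve the cyclic subspace determining the support projection, after which self-adjointness yields the commutation. The only (cosmetic) difference is that the paper works directly with $p$ as the projection onto $\overline{J^N \pi^\omega(\ell^\infty(M))(\xi_\varphi)_\omega}$, using the already-established commutation with $J^N$ inside the computation, whereas you treat $J^N p J^N$ as the projection onto $\overline{N(\xi_\varphi)_\omega}$ and conjugate by $J^N$ at the end.
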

\begin{proof}
Since $e_B$ commutes with $J^M$, $\pi^\omega( (e_B)_n )$ commutes with $J^N$. Denote by $\xi_\varphi \in \mathfrak P^M$ the canonical vector representing $\varphi \in M_\ast$. Since $e_B M e_B = B e_B$ and $e_B \xi_\varphi = \xi_\varphi$, we have
\begin{eqnarray*}
	\pi^\omega( (e_B)_n )J^N \pi^\omega(\ell^\infty(M))(\xi_\varphi)_\omega 
	&=&J^N \pi^\omega( (e_B)_n ) \pi^\omega(\ell^\infty(M))\pi^\omega( (e_B)_n ) (\xi_\varphi)_\omega \\
	&=& J^N \pi^\omega(\ell^\infty(B))\pi^\omega( (e_B)_n ) (\xi_\varphi)_\omega \\
	&\subset& J^N \pi^\omega(\ell^\infty(M)) (\xi_\varphi)_\omega.
\end{eqnarray*}
Since $p$ is the projection onto the closure of $J^N \pi^\omega(\ell^\infty(M)) (\xi_\varphi)_\omega$, we obtain that $\pi^\omega( (e_B)_n )$ commutes with $p$.
\end{proof}

\subsection*{Equivalence relations and von Neumann algebras}

\begin{df}[\cite{FM75}]
Let $(X, \mu)$ be any standard probability space. A {\em nonsingular equivalence relation} $\mathcal R$ {\em defined on} $(X, \mu)$ is an equivalence relation $\mathcal R \subset X \times X$ which satisfies the following three conditions:
\begin{itemize}
\item[$\rm(i)$]  $\mathcal R \subset X \times X$ is a Borel subset,
\item[$\rm(ii)$]  $\mathcal R$ has countable classes and
\item[$\rm(iii)$]  for every $\varphi \in [\mathcal R]$, we have $[\varphi_\ast \mu] = [\mu]$ where $[\mathcal R]$ denotes the {\em full group} of $\mathcal R$ consisting in all the Borel automorphisms $\varphi : X \to X$ such that $\gr(\varphi) \subset \mathcal R$.
\end{itemize}
\end{df}

Following \cite{FM75}, to any nonsingular equivalence relation $\mathcal R$ defined on a standard probability space $(X, \mu)$, one can associate a von Neumann algebra $M = \rL(\mathcal R)$ which contains $A = \rL^\infty(X)$ as a {\em Cartan subalgebra}, that is, $A \subset M$ is maximal abelian with expectation and the group of normalizing unitaries $\mathcal N_M(A) = \{u \in \mathcal U(M) : u A u^*\}$ generates $M$ as a von Neumann algebra.

When $\Gamma \curvearrowright (X, \mu)$ is a nonsingular Borel action of a countable discrete group on a standard probability space, we will denote by $\mathcal R(\Gamma \curvearrowright X)$ the nonsingular {\em orbit equivalence relation} defined by 
$$\mathcal R(\Gamma \curvearrowright X) = \{(x, \gamma x) \mid \gamma \in \Gamma, x \in X\}.$$
When the action $\Gamma \curvearrowright (X, \mu)$ is moreover {\em essentially free}, there is a canonical isomorphism of pairs of von Neumann algebras
$$\left( \rL^\infty(X) \subset \rL(\mathcal R(\Gamma \curvearrowright X)) \right) \cong \left(\rL^\infty(X) \subset \rL^\infty(X) \rtimes \Gamma \right).$$ 
For more information on nonsingular equivalence relations and their von Neumann algebras, we refer the reader to \cite{FM75}.

\subsection*{Strongly ergodic actions and full factors}

We first recall the concept of {\em strong ergodicity} for group actions and equivalence relations.

\begin{df}
Let $(X, \mu)$ be any standard probability space.
\begin{itemize}
\item[$\rm(i)$]  Let $\Gamma$ be any countable discrete group and $\Gamma \curvearrowright (X, \mu)$ any  ergodic nonsingular action. The action $\Gamma \curvearrowright (X, \mu)$ is said to be {\em strongly ergodic} if for any sequence $(C_n)_n$ of measurable subsets of $X$ such that $\lim_n \mu(\gamma C_n \triangle C_n) = 0$ for all $\gamma \in \Gamma$, we have $\lim_n \mu(C_n)(1 - \mu(C_n)) = 0$.
\item[$\rm(ii)$]  Let $\mathcal R$ be any ergodic nonsingular equivalence relation defined on $(X, \mu)$. The equivalence relation $\mathcal R$ is said to be {\em strongly ergodic} if for any sequence $(C_n)_n$ of measurable subsets of $X$ such that $\lim_n \mu(g C_n \triangle C_n) = 0$ for all $g \in [\mathcal R]$, we have $\lim_n \mu(C_n)(1 - \mu(C_n)) = 0$.
\end{itemize}
\end{df}

Put $A = \rL^\infty(X)$ and fix any nonprincipal ultrafilter $\omega \in \beta(\N) \setminus \N$. Then the ergodic nonsingular action $\Gamma \curvearrowright (X, \mu)$ is strongly ergodic if and only if the ultraproduct action $\Gamma \curvearrowright A^\omega$ defined by $\gamma \cdot (a_n)^\omega = (\gamma \cdot a_n)^\omega$ is ergodic, that is, $(A^\omega)^\Gamma = \C 1$. Likewise, the ergodic nonsingular equivalence relation $\mathcal R$ defined on $(X, \mu)$ is strongly ergodic if and only if $\rL(\mathcal R)' \cap A^\omega = \C 1$. We also have that the nonsingular action $\Gamma \curvearrowright (X, \mu)$ is strongly ergodic if and only if the nonsingular orbit equivalence relation $\mathcal R(\Gamma \curvearrowright X)$ is strongly ergodic.

Following \cite{Co74}, we say that a factor $M$ with separable predual is {\em full} if $M_\omega = \C 1$ for some (or any) nonprincipal ultrafilter $\omega \in \beta(\N) \setminus \N$. By \cite[Theorem 5.2]{AH12}, $M$ is full if and only if $M' \cap M^\omega = \C 1$ for some (or any) nonprincipal ultrafilter $\omega \in \beta(\N) \setminus \N$. Observe that for any ergodic nonsingular equivalence relation $\mathcal R$ defined on a standard probability space $(X, \mu)$, if $\rL(\mathcal R)$ is full then $\mathcal R$ is strongly ergodic.

Connes proved in \cite[Theorem 2.12]{Co74} that factors of type ${\rm III_0}$ are never full. Ueda showed in \cite[Corollary 11]{Ue00} that ergodic nonsingular equivalence relations of type ${\rm III_0}$ are never strongly ergodic. We give a short proof of Ueda's result. We refer to \cite{Co72} for the type classification of factors.

\begin{prop}[{\cite[Corollary 11]{Ue00}}]
Let $\mathcal R$ be any ergodic nonsingular equivalence relation defined on a standard probability space $(X, \mu)$. If $\mathcal R$ is of type ${\rm III_0}$, then $\mathcal R$ is not strongly ergodic.
\end{prop}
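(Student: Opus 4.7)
My strategy is to reduce to Connes' theorem that factors of type ${\rm III_0}$ are never full, together with the standard fact that for a Cartan inclusion $A \subset M$ one has $A' \cap M^\omega = A^\omega$. Set $M := \rL(\mathcal R)$ and $A := \rL^\infty(X)$. By the characterization of strong ergodicity recalled just above in the excerpt---namely, $\mathcal R$ is strongly ergodic if and only if $M' \cap A^\omega = \C 1$---it suffices to exhibit a non-trivial element of $M' \cap A^\omega$.

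By Connes' \cite[Theorem 2.12]{Co74}, the type ${\rm III_0}$ factor $M$ fails to be full, so $M' \cap M^\omega \supsetneq \C 1$. Any element of $M' \cap M^\omega$ commutes in particular with $A$, hence lies in $A' \cap M^\omega$. Invoking the equality $A' \cap M^\omega = A^\omega$, valid because $A$ is a Cartan subalgebra of $M$, places this non-trivial element inside $A^\omega$, and hence inside $M' \cap A^\omega$. By the characterization above, $\mathcal R$ is not strongly ergodic.

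The main obstacle is establishing $A' \cap M^\omega = A^\omega$ in the possibly type ${\rm III}$ setting of this proposition. In the tracial ${\rm II}_1$ case this is a classical Popa-type estimate: any $x \in A' \cap M^\omega$ is $\|\cdot\|_2$-approximated by $\rE_{A^\omega}(x)$, using the MASA identity $A' \cap M = A$ together with an averaging over the Feldman--Moore normalizers that generate $\rL(\mathcal R)$ over $A$. To transfer to the type ${\rm III}$ case, I would choose a faithful normal state $\varphi := \tau \circ \rE_A$ on $M$, with $\tau$ coming from an equivalent probability measure on $X$: then $\rE_A$ is $\varphi$-preserving, $A$ lies in the centralizer $M_\varphi$, the modular group satisfies $\sigma_t^\varphi |_A = \id$, and the $\|\cdot\|_{\varphi^\omega}$-norm restricts on $A^\omega$ to the $\tau^\omega$-norm, so that the tracial averaging argument carries over verbatim to yield $A' \cap M^\omega = A^\omega$ and complete the proof.
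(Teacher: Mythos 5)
There is a genuine gap: the identity $A' \cap M^\omega = A^\omega$ that your argument hinges on is false for a general Cartan inclusion. What is true (and what the paper uses, quoting \cite[Theorem A.1.2]{Po95}) is that $A^\omega$ is maximal abelian in $M^\omega$, i.e.\ $(A^\omega)' \cap M^\omega = A^\omega$; this is what the normalizer-averaging argument actually yields, because the averaging controls commutation with all of $A^\omega$, not merely with the copy of $A$ embedded as constant sequences. The relative commutant $A' \cap M^\omega$ is in general strictly larger. For a concrete counterexample, take $M = \overline{\bigotimes}_n (\mathbf M_2(\C), \tr)$ the hyperfinite ${\rm II_1}$ factor with Cartan subalgebra $A = \overline{\bigotimes}_n \C^2$, and let $x_n$ be the off-diagonal symmetry sitting in the $n$-th tensor factor: then $(x_n)^\omega$ commutes with every fixed $a \in A$ (approximate $a$ from the first $k$ factors), so $(x_n)^\omega \in A' \cap M^\omega$, yet $\rE_{A^\omega}((x_n)^\omega) = 0$ while $\|(x_n)^\omega\|_2 = 1$. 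Consequently, an element of $M' \cap M^\omega$ commutes with $A$ but need not commute with $A^\omega$, and you cannot conclude that it lies in $A^\omega$. This is not a repairable technicality of your reduction: the Connes--Jones example recalled in the introduction is a strongly ergodic action whose group measure space factor is McDuff, hence non-full; so no argument that only uses $M' \cap M^\omega \neq \C 1$ can ever produce a nontrivial element of $M' \cap A^\omega$.

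The paper's proof avoids this by using a strictly stronger input than non-fullness, which is where the type ${\rm III_0}$ hypothesis really enters: by \cite[Theorem 6.18]{AH12}, the ultrapower $M^\omega$ of a type ${\rm III_0}$ factor is not even a factor, i.e.\ $\mathcal Z(M^\omega) \neq \C 1$. An element of $\mathcal Z(M^\omega) = (M^\omega)' \cap M^\omega$ does commute with all of $A^\omega$, so maximal abelianness of $A^\omega$ (the correct form of your lemma) places it inside $A^\omega$, and since it also commutes with $M$ it lies in $M' \cap A^\omega$, contradicting strong ergodicity. If you replace Connes' non-fullness theorem by the Ando--Haagerup non-factoriality of $M^\omega$, and replace $A' \cap M^\omega = A^\omega$ by $(A^\omega)' \cap M^\omega = A^\omega$, your outline becomes the paper's proof.
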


\begin{proof}
Put $A = \rL^\infty(X)$ and $M = \rL(\mathcal R)$. Assume that $\mathcal R$ is of type ${\rm III_0}$. Then $M$ is of type ${\rm III_0}$. Fix a nonprincipal ultrafilter $\omega$ on $\N$. Then $\mathcal Z(M^\omega) \neq \C 1$ by \cite[Theorem 6.18]{AH12}. Since $A^\omega$ is maximal abelian in $M^\omega$ by \cite[Theorem A.1.2]{Po95}, we have
$$\C 1 \neq \mathcal Z(M^\omega) = (M^\omega)' \cap M^\omega =(M^\omega)' \cap A^\omega \subset M' \cap A^\omega.$$
Since $M' \cap A^\omega \neq \C 1$, $\mathcal R$ is not strongly ergodic.
\end{proof}

\subsection*{Popa's intertwining-by-bimodules}

	In this subsection, we briefly recall Popa's intertwining-by-bimodules \cite{Po01, Po03}. In the present paper, we will need a generalization of Popa's intertwining-by-bimodules to the framework of type ${\rm III}$ von Neumann algebras developed by the authors in \cite{HI15}. We will use the following terminology (see \cite[Definition 4.1]{HI15}). 

\begin{df}\label{definition intertwining}\upshape
	Let $M$ be any $\sigma$-finite von Neumann algebra, $1_A$ and $1_B$ any nonzero projections in $M$, $A\subset 1_AM1_A$ and $B\subset 1_BM1_B$ any von Neumann subalgebras with faithful normal conditional expectations $\rE_A : 1_A M 1_A \to A$ and $\rE_B : 1_B M 1_B \to B$ respectively.  

	We say that $A$ {\em embeds with expectation into} $B$ {\em inside} $M$ and write $A \preceq_M B$ if there exist projections $e \in A$ and $f \in B$, a nonzero partial isometry $v \in eMf$ and a unital normal $\ast$-homomorphism $\theta : eAe \to fBf$ such that the inclusion $\theta(eAe) \subset fBf$ is with expectation and $av = v \theta(a)$ for all $a \in eAe$.
\end{df}

The main characterization of intertwining subalgebras we will use in this paper is the following result proven in \cite[Theorem 4.3]{HI15}.

\begin{thm}\label{intertwining for type III}
	Keep the same notation as in Definition \ref{definition intertwining} and moreover assume that $A$ is finite. 
Then the following conditions are equivalent.
	\begin{enumerate}
		\item $A \preceq_M B$.
	\item There exists no net $(w_i)_{i \in I}$ of unitaries in $\mathcal U(A)$ such that $\rE_{B}(b^*w_i a)\rightarrow 0$ in the $\sigma$-strong topology for all $a,b\in 1_AM1_B$.
\end{enumerate}
	\end{thm}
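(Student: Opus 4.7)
The plan is to follow Popa's basic-construction strategy for intertwining-by-bimodules, adapted to the setting in which $A$ is finite but $M$ and $B$ need not be. Throughout, I would fix a faithful normal conditional expectation $\rE_A : 1_A M 1_A \to A$, choose a faithful normal state $\varphi$ on $1_A M 1_A$ satisfying $\varphi = \varphi \circ \rE_A$ with $\varphi|_A$ a faithful normal trace (which is possible since $A$ is finite), and extend compatibly so that the Jones basic construction $\langle M, e_B \rangle$ attached to $\rE_B$ can be realized on a common standard form together with its canonical semifinite trace $\Tr$ dual to $\rE_B$.

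For the implication $(1) \Rightarrow (2)$, I would argue by contrapositive. Given data $(e,f,v,\theta)$ as in Definition \ref{definition intertwining}, suppose for contradiction that $(w_i)_{i\in I}$ is a net in $\mathcal U(A)$ with $\rE_B(b^* w_i a) \to 0$ in the $\sigma$-strong topology for all $a,b \in 1_A M 1_B$. Taking $a = b = v$ and using $v = ev = vf$ together with the intertwining relation $xv = v\theta(x)$ for $x \in eAe$, one computes
$$\rE_B(v^* w_i v) = \rE_B(v^*(ew_ie)v) = v^* v\, \theta(ew_ie) \in fBf.$$
Composing with the faithful normal conditional expectation $\rE_{\theta(eAe)} : fBf \to \theta(eAe)$ provided by the hypothesis that the inclusion $\theta(eAe) \subset fBf$ is with expectation, and using that $\theta$ is a unital normal $*$-homomorphism, the resulting element $\theta(ew_ie)$ has $L^2$-norm bounded below in terms of the (finite) trace of $e$ in $A$, contradicting $\sigma$-strong convergence to $0$.

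For $(2) \Rightarrow (1)$ I would mimic the classical argument of Popa. Suppose there is no net of unitaries as in (2); then there exist $a_0, b_0 \in 1_AM1_B$ and $\delta > 0$ such that $\|\rE_B(b_0^* w a_0)\|_{\varphi} \geq \delta$ for every $w$ in a cofinal family of $\mathcal U(A)$. Translated to the basic construction, this produces a uniform lower bound for a matrix coefficient of $w e_B w^*$, so $0$ lies outside the weakly closed convex hull
$$K := \overline{\conv}^{\,\text{w}}\bigl\{\, 1_A w e_B w^* 1_A : w \in \mathcal U(A)\,\bigr\} \subset 1_A \langle M, e_B\rangle 1_A.$$
Since $A$ is finite, its unitary group carries an invariant mean coming from the chosen trace $\varphi|_A$, and a Day-type averaging argument yields a nonzero fixed element $T \in K \cap \bigl(A' \cap 1_A \langle M, e_B\rangle 1_A\bigr)$. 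Because $T$ lies in the convex hull of elements $w e_B w^*$ of trace one, it has finite value under $\Tr$. A nonzero spectral projection $q$ of $T^*T$ then lies in $A' \cap 1_A \langle M, e_B\rangle 1_A$ with $\Tr(q) < \infty$. The standard correspondence between finite-trace projections in $\langle M, e_B\rangle$ and finite-index $A$--$B$ subbimodules of $\rL^2(1_A M 1_B)$ produces projections $e \in A$, $f \in B$, a nonzero partial isometry $v \in eMf$ and a unital normal $*$-homomorphism $\theta : eAe \to fBf$ with $av = v\theta(a)$ for all $a \in eAe$; verifying that $\theta(eAe) \subset fBf$ is with expectation is the final step, handled by the modular analysis developed in \cite{HI15}.

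The main obstacle lies entirely on the $(2) \Rightarrow (1)$ side. In the classical tracial setting $\langle M, e_B\rangle$ is itself semifinite with a trace whose restriction to $M$ is finite, which lets one freely manipulate $\rL^2$-norms and extract fixed points in the finite part. Here $\Tr$ on $\langle M, e_B\rangle$ is genuinely semifinite and far from finite on $M$, so the convex averaging must be carried out in a way that keeps the output inside the finite part of the basic construction; the finiteness hypothesis on $A$, together with the choice of $\varphi$ with $\varphi|_A$ tracial, is precisely what forces this. The other delicate point, namely that the resulting normal $*$-homomorphism $\theta$ automatically has its image with expectation in $fBf$ rather than being only an abstract subalgebra, requires the modular-theoretic input from \cite{HI15}, which is why the theorem is invoked from there rather than reproved from scratch.
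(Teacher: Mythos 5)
First, a caveat on the comparison: the paper does not prove this statement at all --- it imports it verbatim from \cite[Theorem 4.3]{HI15} --- so your outline can only be judged on its own merits. Its general strategy (Popa's convexity argument in the basic construction, run with a state $\varphi$ chosen so that $\varphi=\varphi\circ\rE_A$ and $A$ sits in the centralizer of $\varphi$) is indeed the right one and is essentially the strategy of \cite{HI15}. However, two steps are genuinely broken. In $(1)\Rightarrow(2)$ you test the hypothetical net only against $a=b=v$ and assert that $\theta(ew_ie)$ has $\rL^2$-norm bounded below in terms of $\tau(e)$. This is false: for $w\in\mathcal U(A)$ the compression $ewe$ can vanish entirely (take $A=\mathbf M_2(\C)$, $e$ a minimal projection and $w$ the flip symmetry), so there is no uniform lower bound over $\mathcal U(A)$. (A smaller slip: $v^*v$ need not lie in $B$, so the correct identity is $\rE_B(v^*w_iv)=\rE_B(v^*v)\,\theta(ew_ie)$, using the $B$-bimodularity of $\rE_B$.) The contradiction has to be extracted from the full family of test vectors $a=xv$, $b=yv$ with $x,y\in Ae$, showing that $\rE_B(v^*v)\,\theta(ey^*w_ixe)\to 0$ for all such $x,y$ is incompatible with $\rE_B(v^*v)\neq 0$, the normality of $\theta$ and the unitarity of $w_i$ in the finite algebra $A$.

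The more serious problem is in $(2)\Rightarrow(1)$: the whole argument rests on ``the canonical semifinite trace $\Tr$ dual to $\rE_B$'' on $\langle M,e_B\rangle$. No such trace exists in the present setting. The basic construction $\langle M,e_B\rangle=(J^MBJ^M)'$ is Morita equivalent to $B$, and here $B$ is an arbitrary $\sigma$-finite --- possibly type ${\rm III}$ --- von Neumann algebra; that is precisely the case the theorem is designed to cover, and then $\langle M,e_B\rangle$ admits no semifinite trace whatsoever. Every place you use traciality (``elements $we_Bw^*$ of trace one'', ``$\Tr(q)<\infty$'', ``finite-trace projections correspond to subbimodules'', and above all the fact that conjugation by $\mathcal U(A)$ preserves the $\rL^2$-norm, which is what makes the Day/minimal-norm averaging produce an $A$-commuting fixed point) must be redone with Haagerup's canonical operator-valued weight from $\langle M,e_B\rangle$ to $M$ sending $xe_By$ to $xy$ and the associated dual weight of $\varphi$; the condition $A\subset(1_AM1_A)^\varphi$ is then exactly what restores the needed $\rL^2$-isometry. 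Finally, deferring the verification that $\theta(eAe)\subset fBf$ is with expectation to ``the modular analysis developed in \cite{HI15}'' is circular here, since the entire theorem being proved is \cite[Theorem 4.3]{HI15}; that step is part of what a self-contained proof must supply.
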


\section{Bi-exactness and Ozawa's condition (AO)}\label{AO}

\subsection*{Bi-exactness for discrete groups}
	Recall from \cite{Oz03} that a von Neumann algebra $M\subset \B(H)$ satisfies {\em condition} (AO) if there exist unital $\sigma$-weakly dense C$^{\ast}$-subalgebras $A\subset M$ and $B\subset M'$ such that $A$ is locally reflexive and the map 
\begin{equation*}
\nu : A\ota B\longrightarrow \B(H)/\K(H) : a\otimes b\mapsto ab+\K(H)
\end{equation*}
is continuous with respect to the minimal tensor norm. Recall that $A$ is {\em locally reflexive} or equivalently has {\em property} $C''$ (see e.g.\ \cite[Section 9]{BO08}) if for any C$^{\ast}$-algebra $C$, the inclusion map $A^{\ast\ast}\ota C\hookrightarrow (A\otm C)^{\ast\ast}$ is continuous with respect to the minimal tensor norm. In this case, any $\ast$-homomorphism $\pi\colon A\otm C \to \B(K)$ has an extension $\widetilde \pi : A^{**}\otm C  \to \B(K)$ which is normal on $A^{**}\otimes \C 1$ (since $\pi$ always has a canonical extension on $(A\otm C)^{**}$).

	We next recall the notion of {\em bi-exactness} for discrete groups which was introduced by Ozawa in \cite{Oz04} (using the terminology {\em class} $\mathcal S$) and intensively studied in \cite[Chapter 15]{BO08}. Our definition is different from the original one, but it is equivalent to it and it is moreover adapted to the framework of discrete quantum groups \cite[Definition 3.2]{Is13}.

\begin{df}[{\cite[Proposition 15.2.3(2)]{BO08}}]\upshape
	Let $\Gamma$ be any discrete group. We say that $\Gamma$ is {\em bi-exact} if there exists a $(\Gamma \times \Gamma)$-globally invariant unital $\rC^*$-subalgebra $\mathcal{B} \subset \ell^{\infty}(\Gamma)$ such that the following two conditions are satisfied:
\begin{itemize}
	\item[$\rm (i)$] The algebra $\mathcal{B}$ contains $c_0(\Gamma)$ so that the quotient $\mathcal{B}_\infty:=\mathcal{B}/c_0(\Gamma)$ is well-defined.
	\item[\rm (ii)] The left translation action $\Gamma \curvearrowright \ell^\infty(\Gamma)$ induces an amenable action $\Gamma \curvearrowright \mathcal{B}_\infty$ and the right translation action $\ell^\infty(\Gamma) \curvearrowleft \Gamma$ induces the trivial action on $\mathcal{B}_\infty$.
\end{itemize}
\end{df}

The class of bi-exact discrete groups includes amenable groups, free groups \cite{AO74}, discrete subgroups of simple connected Lie groups of real rank one \cite{Sk88} and Gromov word-hyperbolic groups \cite{Oz03}. Observe that for any bi-exact discrete group $\Gamma$, the group von Neumann algebra $\rL(\Gamma)$ satisfies condition (AO). We refer the reader to \cite[Chapter 15]{BO08} for more information on bi-exact discrete groups.

\subsection*{Ozawa's condition (AO) in crossed product von Neumann algebras}
In this subsection, we prove a relative version of Ozawa's condition (AO) in the framework of crossed product von Neumann algebras. This result will be used in the proof of Theorem \ref{thmA}. 

Let $\Gamma$ be any discrete group, $B \subset \mathcal B$ any inclusion of $\sigma$-finite von Neumann algebras and $\Gamma \curvearrowright \mathcal B$ any action that leaves globally invariant the subalgebra $B$. Denote by $M:= B\rtimes \Gamma$ and $\mathcal M = \mathcal B \rtimes \Gamma$ the corresponding crossed product von Neumann algebras, by $\rE_{\mathcal B} : \mathcal M \to \mathcal B$ the canonical faithful normal conditional expectation and by $e_{\mathcal B} : \rL^2(\mathcal M) \to \rL^2(\mathcal B)$ the corresponding Jones projection. We use the notation and terminology of Section \ref{preliminaries} for the standard forms $(\mathcal B, \rL^2(\mathcal B), J^{\mathcal B}, \mathfrak P^{\mathcal B})$ of $\mathcal B$ and $(\mathcal M, \rL^2(\mathcal M), J^{\mathcal M}, \mathfrak P^{\mathcal M})$ of $\mathcal M = \mathcal B \rtimes \Gamma$.

We define a nondegenerate (and possibly nonunital) $\rC^*$-algebra and its multiplier $\rC^*$-algebra inside $\B(\rL^2(\mathcal M))$ by
\begin{align*}
\mathcal{K_{\mathcal B}} &:= \mathrm{C}^* \left\{aJ^{\mathcal M}x J^{\mathcal M} e_{\mathcal B} b J^{\mathcal M}yJ^{\mathcal M}\mid a,b,x,y\in B\rtimes_{\rm red}\Gamma \right\} \subset \B(\rL^2(\mathcal M)) \\
\mathfrak{M}(\mathcal K_{\mathcal B})&:= \left\{ T \in \B(\rL^2(\mathcal M)) \mid T\mathcal K_{\mathcal B}\subset \mathcal K_{\mathcal B}  \text{ and } \mathcal K_{\mathcal B} T \subset \mathcal K_{\mathcal B} \right\}.
\end{align*}
where $\mathrm{C}^* \left\{ \mathcal Y \right \} \subset \mathbf B(\rL^2(\mathcal M))$ denotes the $\rC^*$-subalgebra of $\mathbf B(\rL^2(\mathcal M))$ generated by the subset $\mathcal Y \subset \mathbf B(\rL^2(\mathcal M))$. We record the following elementary lemma.

\begin{lem}\label{lemma AO3}
	We have
$$ \mathcal{K}_\mathcal{B} \subset \B(\rL^2(\mathcal{B})) \otm \K(\ell^2(\Gamma)).$$
\end{lem}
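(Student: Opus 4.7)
\emph{Plan.} My plan is to verify the inclusion directly on a norm-dense family of generators of $\mathcal{K}_\mathcal{B}$ by explicit computation in the standard form of $\mathcal M$. Both $\mathcal K_\mathcal B$ and $\B(\rL^2(\mathcal B)) \otm \K(\ell^2(\Gamma))$ are norm-closed $\rC^*$-subalgebras of $\B(\rL^2(\mathcal M))$. Since the $\ast$-algebra spanned by $\{b \lambda_g \mid b \in B,\ g \in \Gamma\}$ is norm-dense in $B \rtimes_{\mathrm{red}} \Gamma$ and multiplication in $\B(\rL^2(\mathcal M))$ is jointly norm-continuous on bounded sets, it suffices to show that for all $a_0, b_0, x_0, y_0 \in B$ and all $g_1, g_2, g_3, g_4 \in \Gamma$, the monomial
\[
T := (a_0 \lambda_{g_1})\, J^{\mathcal M} (x_0 \lambda_{g_2}) J^{\mathcal M}\, e_{\mathcal B}\, (b_0 \lambda_{g_3})\, J^{\mathcal M} (y_0 \lambda_{g_4}) J^{\mathcal M}
\]
belongs to $\B(\rL^2(\mathcal B)) \otm \K(\ell^2(\Gamma))$.

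Working with the identification $\rL^2(\mathcal M) = \rL^2(\mathcal B) \otimes \ell^2(\Gamma)$, the given formula $J^{\mathcal M}(\xi \otimes \delta_g) = u_g^* J^{\mathcal B} \xi \otimes \delta_{g^{-1}}$, and the fact that the canonical implementing unitaries $u_g$ commute with $J^{\mathcal B}$, I would first record the four building-block formulas on $\rL^2(\mathcal B) \otimes \ell^2(\Gamma)$: $b \in \mathcal B$ acts on the left as $b \otimes 1$; $\lambda_g$ acts on the left as $u_g \otimes \lambda_g$; the right actions satisfy $J^{\mathcal M} b' J^{\mathcal M} = \sum_{g \in \Gamma} \Ad(u_g)(J^{\mathcal B} b' J^{\mathcal B}) \otimes P_g$, where $P_g$ denotes the rank-one projection onto $\C \delta_g$; and $J^{\mathcal M} \lambda_h J^{\mathcal M} = 1 \otimes \rho_h$, where $\rho$ is the right regular representation of $\Gamma$ on $\ell^2(\Gamma)$. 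Under these formulas $e_{\mathcal B} = 1 \otimes P_e$ is rank one on the $\ell^2(\Gamma)$-factor.

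Plugging these expressions into $T$ and multiplying out, the rank-one projection $P_e$ inside $e_{\mathcal B}$ collapses the sum in $J^{\mathcal M}(x_0 \lambda_{g_2})J^{\mathcal M}$ to a single surviving index, so that $J^{\mathcal M}(x_0 \lambda_{g_2}) J^{\mathcal M} e_{\mathcal B}$ reduces to a rank-one tensor of the form $\Ad(u_{g_2^{-1}})(J^{\mathcal B} x_0 J^{\mathcal B}) \otimes |\delta_{g_2^{-1}}\rangle\langle \delta_e|$; a further right multiplication by $(b_0 \lambda_{g_3}) J^{\mathcal M}(y_0 \lambda_{g_4}) J^{\mathcal M}$ then collapses the remaining sum to the single index $g = g_3^{-1}$. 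A short computation yields
\[
T = A \otimes |\delta_{g_1 g_2^{-1}}\rangle\langle \delta_{g_3^{-1} g_4}|
\]
for a suitable operator $A \in \B(\rL^2(\mathcal B))$ built from $a_0, b_0, x_0, y_0$ and the implementing unitaries. This element manifestly belongs to the algebraic tensor product $\B(\rL^2(\mathcal B)) \otimes \K(\ell^2(\Gamma))$, and hence to $\B(\rL^2(\mathcal B)) \otm \K(\ell^2(\Gamma))$.

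The main obstacle is simply careful bookkeeping with the modular data: one must track how the unitaries $u_g$ combine with the $J^{\mathcal B}$'s while passing the various $J^{\mathcal M}(\cdot) J^{\mathcal M}$'s through the crossed-product generators and through $e_{\mathcal B}$. Once the four building-block formulas above are established, the rank-one collapse is immediate from $e_{\mathcal B} = 1 \otimes P_e$, and the norm-density/continuity argument then extends the inclusion to all of $\mathcal K_\mathcal B$.
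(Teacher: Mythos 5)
Your proof is correct and follows essentially the same route as the paper: both arguments compute the generators of $\mathcal K_{\mathcal B}$ explicitly in the standard form $\rL^2(\mathcal B)\otimes\ell^2(\Gamma)$ and observe that $e_{\mathcal B}=1\otimes P_{\C\delta_e}$ collapses the $\ell^2(\Gamma)$-leg to rank-one operators, the only difference being that you use the unitarily equivalent presentation $b\mapsto b\otimes 1$, $\lambda_g\mapsto u_g\otimes\lambda_g$ while the paper uses $\pi_\sigma(b)=\sum_h\sigma_{h^{-1}}(b)\otimes P_{\C\delta_h}$, $\lambda_g\mapsto 1\otimes\lambda_g$. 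The paper in fact identifies $\mathcal K_{\mathcal B}$ exactly as $\mathcal C\otm\K(\ell^2(\Gamma))$ for an explicit $\rC^*$-algebra $\mathcal C\subset\B(\rL^2(\mathcal B))$, but only the inclusion you prove is needed.
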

\begin{proof}
Denote by $\sigma : \Gamma \curvearrowright \mathcal B$ the action and by $u : \Gamma \to \mathcal U(\rL^2(\mathcal B))$ the canonical unitary representation implementing the action $\sigma$. Recall that $\rL^2(\mathcal M) = \rL^2(\mathcal B) \otimes \ell^2(\Gamma)$. Regard $\mathcal M = \mathcal B\rtimes \Gamma$ as generated by $\pi_\sigma(b) = \sum_{h \in \Gamma}\sigma_{h^{-1}}(b) \otimes P_{\C \delta_h}$ for $b \in \mathcal B$ and $1 \otimes \lambda_g$ for $g \in \Gamma$ where $P_{\C \delta_h} : \ell^2(\Gamma) \to \C \delta_h$ is the orthogonal projection onto $\C \delta_h$. We have $J^{\mathcal M} (1\otimes \lambda_g) J^{\mathcal M} = u_g \otimes \rho_g $ for all $g\in \Gamma$. Let $\mathcal C\subset \B(\rL^2(\mathcal B))$ be the $\rC^*$-algebra generated by $B$, $J^{\mathcal B}  B J^{\mathcal B}$ and $u_g$ for all $g\in \Gamma$. We will show that $\mathcal{K}=\mathcal C\otm \K(\ell^2(\Gamma))$.

	Recall that $e_{\mathcal B}=1\otimes P_{\C \delta_e}$. For all $g,h\in \Gamma$, denote by $e_{g,h} : \C \delta_h \to \C \delta_g$ the partial isometry sending $\delta_h$ onto $\delta_g$. For all $a,b\in B$ and all $g,h,s,t\in \Gamma$, we have
\begin{align*}
\pi_\sigma(a) (J^{\mathcal B} b J^{\mathcal B}\otimes 1) e_{\mathcal B} &= e_{\mathcal B} \pi_\sigma(a)(J^{\mathcal B} b J^{\mathcal B}\otimes 1) = aJ^{\mathcal B} b J^{\mathcal B} \otimes P_{\C\delta_e} \\
(1\otimes \lambda_g)  (u_s\otimes \rho_s) e_{\mathcal B} (1\otimes \lambda_h)(u_t\otimes  \rho_t) &= u_{st} \otimes \lambda_g\rho_{s}P_{\C\delta_e} \lambda_h\rho_{t}=u_{st}\otimes e_{gs^{-1}, h^{-1}t}.
\end{align*}
We then have
\begin{align*}
\mathcal K_{\mathcal B}&=\mathrm{C}^* \left\{aJ^{\mathcal M}x J^{\mathcal M} e_{\mathcal B} b J^{\mathcal M}yJ^{\mathcal M}\mid a,b,x,y\in B\rtimes_{\rm red}\Gamma \right\} \\
	&=\mathrm{C}^* \left\{aJ^{\mathcal B}b J^{\mathcal B} u_g \otimes e_{s,t} \mid a,b\in B , g,s,t\in \Gamma \right\}\\
	&=\mathcal C\otimes_{\rm min}\K(\ell^2(\Gamma)).
\end{align*}
This finishes the proof of Lemma \ref{lemma AO3}.
\end{proof}

Consider now the following unital $\ast$-homomorphism:
$$\nu_{\mathcal B} : (B\rtimes_{\rm red}\Gamma) \otimes_{\rm alg} J^{\mathcal M}(B\rtimes_{\rm red} \Gamma)J^{\mathcal M} \rightarrow \mathfrak{M}(\mathcal{K}_{\mathcal B})/\mathcal{K}_{\mathcal B} : a\otimes J^{\mathcal M}b J^{\mathcal M} \mapsto a \, J^{\mathcal M}bJ^{\mathcal M}+\mathcal{K}_{\mathcal B}.$$
Ozawa proved in \cite[Proposition 4.2]{Oz04} that when $\Gamma$ is bi-exact and $B = \mathcal B$ is finite and amenable, the map $\nu_{\mathcal B}$ is continuous with respect to the minimal tensor norm. This is nothing but a relative version of the condition (AO) in the framework of crossed product von Neumann algebras. Observe that when $B= \mathcal B = \C 1$, continuity of $\nu_{\mathcal B}$ with respect to the minimal tensor norm implies that $\rL(\Gamma)$ satisfies condition (AO). Since $\mathcal{K}_{\mathcal{B}}$ is the smallest $\rC^*$-algebra containing $1\otimes c_0(\Gamma)$ and such that its multiplier algebra contains $B\rtimes_{\rm red}\Gamma$ and $J^{\mathcal M}(B\rtimes_{\rm red} \Gamma)J^{\mathcal M}$, we can easily generalize \cite[Proposition 4.2]{Oz04} as follows. 
\begin{prop}\label{AO in crossed product}
	Keep the same setting as above and assume that $\Gamma$ is bi-exact and $B$ is amenable. Then the map 
$$\nu_{\mathcal B} : (B\rtimes_{\rm red}\Gamma) \otimes_{\rm alg} J^{\mathcal M}(B\rtimes_{\rm red} \Gamma)J^{\mathcal M} \rightarrow \mathfrak{M}(\mathcal{K}_{\mathcal B})/\mathcal{K}_{\mathcal B} : \ a\otimes J^{\mathcal M}b J^{\mathcal M} \mapsto a \, J^{\mathcal M}bJ^{\mathcal M}+\mathcal{K}_{\mathcal B}$$
is well-defined and continuous with respect to the minimal tensor norm.
\end{prop}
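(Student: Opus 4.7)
The strategy is to adapt Ozawa's original proof of condition (AO) \cite[Proposition 4.2]{Oz04}, replacing the finite-trace hypothesis on $B$ by mere injectivity. Two ingredients drive the argument. First, bi-exactness of $\Gamma$, via the amenable action $\Gamma \curvearrowright \mathcal{B}_\infty$ and the short exact sequence $0 \to \K(\ell^2(\Gamma)) \cong c_0(\Gamma) \red \Gamma \to \mathcal{B} \red \Gamma \to \mathcal{B}_\infty \red \Gamma \to 0$, combined with the triviality of the right translation on $\mathcal{B}_\infty$, yields a ucp map
$$ \nu_\Gamma : \rC^*_{\rm red}(\Gamma) \otm \rC^*_{\rm red}(\Gamma)^{\rm op} \to \B(\ell^2(\Gamma))/\K(\ell^2(\Gamma)), \quad \lambda_g \otimes \lambda_h^{\rm op} \mapsto \lambda_g \rho_h + \K(\ell^2(\Gamma)),$$
which is exactly the content of condition (AO) for $\rL(\Gamma)$. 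Second, amenability of $B$, through the Effros--Lance characterization of injectivity, provides a $\ast$-homomorphism
$$ \mu_B : B \otm B^{\rm op} \to \B(\rL^2(B)), \quad a \otimes b^{\rm op} \mapsto a \cdot J^{\mathcal B} b^* J^{\mathcal B},$$
continuous with respect to the minimal tensor norm on the domain.

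With these in hand, $\nu_{\mathcal B}$ can be assembled as follows. Denote by $\mathcal{C} \subset \B(\rL^2(\mathcal B))$ the $\rC^*$-algebra generated by $B$, $J^{\mathcal B}BJ^{\mathcal B}$, and $\{u_g\}_{g \in \Gamma}$, as in the proof of Lemma \ref{lemma AO3}, where the identification $\mathcal{K}_{\mathcal B} = \mathcal{C} \otm \K(\ell^2(\Gamma))$ is established. Using the Fell absorption unitary $U \in \mathcal U(\rL^2(\mathcal B) \otimes \ell^2(\Gamma))$ defined by $U(\xi \otimes \delta_g) := u_g^{-1} \xi \otimes \delta_g$, one computes $U^* \pi_\sigma(b) U = b \otimes 1$ and $U^* (1 \otimes \lambda_g) U = u_g \otimes \lambda_g$, exhibiting $B \red \Gamma$ as a $\rC^*$-subalgebra of $\mathcal{C} \otm \rC^*_{\rm red}(\Gamma)$. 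An analogous computation for $J^{\mathcal M} \pi_\sigma(b) J^{\mathcal M}$ and $J^{\mathcal M} (1 \otimes \lambda_g) J^{\mathcal M} = u_g \otimes \rho_g$ shows that $J^{\mathcal M}(B \red \Gamma) J^{\mathcal M}$ embeds into $\mathcal{C} \otm \rC^*_{\rm red}(\Gamma)^{\rm op}$. The map $\nu_{\mathcal B}$ is then the composition of the tensor product $\mu_B \otimes \nu_\Gamma$, continuous on the minimal tensor product by the two ingredients above, with the identification of its target $\mathcal{C} \otm \bigl(\B(\ell^2(\Gamma))/\K(\ell^2(\Gamma))\bigr)$ as a subquotient of $\mathfrak{M}(\mathcal{K}_{\mathcal B})/\mathcal{K}_{\mathcal B}$.

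The main obstacle is the careful bookkeeping required to verify that, after Fell absorption, the commuting pair of generators of $B \red \Gamma$ and $J^{\mathcal M}(B \red \Gamma)J^{\mathcal M}$ fits compatibly into $(\mathcal{C} \otm \rC^*_{\rm red}(\Gamma)) \otm (\mathcal{C} \otm \rC^*_{\rm red}(\Gamma))^{\rm op}$ modulo $\mathcal{K}_{\mathcal B}$. The explicit multiplication table from the proof of Lemma \ref{lemma AO3}, namely $(1 \otimes \lambda_g)(u_s \otimes \rho_s) e_{\mathcal B} (1 \otimes \lambda_h)(u_t \otimes \rho_t) = u_{st} \otimes e_{gs^{-1}, h^{-1}t}$, shows that the noncommutativity between the $\Gamma$-generators on either side produces only correction terms lying in $\mathcal{C} \otm \K(\ell^2(\Gamma))$, which are absorbed into $\mathcal{K}_{\mathcal B}$. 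Consequently no new conceptual input beyond Ozawa's original finite-trace argument is required.
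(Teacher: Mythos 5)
Your high-level strategy (adapt \cite[Proposition 4.2]{Oz04}, replacing finiteness of $B$ by semidiscreteness) is the right one and is what the paper does, but the concrete assembly you propose does not go through, and the gap sits exactly at the point you dismiss as bookkeeping. A single Fell absorption unitary cannot put the two commuting copies simultaneously into tensor-split position: with $U(\xi\otimes\delta_g)=u_g^{-1}\xi\otimes\delta_g$ you do get $U^*\pi_\sigma(b)U=b\otimes 1$ and $U^*(1\otimes\lambda_g)U=u_g\otimes\lambda_g$, but conjugating the commutant copy by the \emph{same} $U$ (and you must use the same one, since $\nu_{\mathcal B}$ is about products formed in a single representation) turns $J^{\mathcal M}\pi_\sigma(B)J^{\mathcal M}$ into twisted diagonals of the form $\sum_g J^{\mathcal B}\sigma_g(b)J^{\mathcal B}\otimes P_{\C\delta_g}$, elements of $J^{\mathcal B}BJ^{\mathcal B}\ovt\ell^\infty(\Gamma)$ which for a nontrivial action do not lie in $\mathcal C\otm\B(\ell^2(\Gamma))$. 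So the commuting pair $\bigl(B\red\Gamma,\ J^{\mathcal M}(B\red\Gamma)J^{\mathcal M}\bigr)$ does not embed into $(\mathcal C\otm \rC^*_{\rm red}(\Gamma))\cdot(\mathcal C\otm \rC^*_{\rm red}(\Gamma)^{\rm op})$ in a common representation, and there is nothing for $\mu_B\otimes\nu_\Gamma$ to be applied to. The multiplication table you cite from Lemma \ref{lemma AO3} does not repair this: every identity there has $e_{\mathcal B}$ sandwiched in the middle, and it is precisely that compression which collapses the twisted diagonals to single entries; without $e_{\mathcal B}$ the mixed products are not perturbations by elements of $\mathcal K_{\mathcal B}$ of split tensors.

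Relatedly, your first ingredient extracts from bi-exactness only condition (AO) for $\rL(\Gamma)$, and that alone is not known to suffice for the crossed-product statement. Ozawa's proof of \cite[Proposition 4.2]{Oz04} uses the boundary piece $\mathcal B\subset\ell^\infty(\Gamma)$ itself, not just the resulting map $\nu_\Gamma$: one adjoins the diagonal copy $1\otimes\mathcal B$ acting on $\ell^2(\Gamma)$, uses amenability of the action $\Gamma\curvearrowright\mathcal B_\infty$ to obtain nuclearity of the relevant quotient crossed product and hence the desired ucp map on the minimal tensor product, and uses triviality of the right translation action on $\mathcal B_\infty$ to see that this diagonal commutes with $J^{\mathcal M}(B\red\Gamma)J^{\mathcal M}$ modulo $\mathcal K_{\mathcal B}$; it is this mechanism, not a tensor splitting, that absorbs the twisting above. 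The paper's proof keeps all of that intact and changes exactly one step: the minimal continuity of the multiplication map $\pi_\sigma(B)\ota J^{\mathcal M}\pi_\sigma(B)J^{\mathcal M}\to\B(\rL^2(\mathcal M))$, which in the original argument came from finiteness together with injectivity, is now obtained from semidiscreteness of $B$ alone, after which Ozawa's argument applies mutatis mutandis (using also that $\mathcal K_{\mathcal B}$ is the smallest $\rC^*$-algebra containing $1\otimes c_0(\Gamma)$ whose multiplier algebra contains both copies). A secondary but real defect of your scheme: $\mathcal C$ contains the von Neumann algebra $B$ and is therefore not exact, so $\mathcal C\otm\bigl(\B(\ell^2(\Gamma))/\K(\ell^2(\Gamma))\bigr)$ is only a quotient of $\bigl(\mathcal C\otm\B(\ell^2(\Gamma))\bigr)/\mathcal K_{\mathcal B}$, which is the wrong direction for identifying your target with a subalgebra of $\mathfrak M(\mathcal K_{\mathcal B})/\mathcal K_{\mathcal B}$.
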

\begin{proof}
	As in the proof of Lemma \ref{lemma AO3}, regard $\mathcal M = \mathcal B \rtimes \Gamma$ as generated by $\pi_\sigma(\mathcal B)$ and $(1 \otimes \lambda)(\Gamma)$.  Since $B$ is amenable (i.e.\ semidiscrete), the map 
$$\pi_\sigma(B) \ota J^{\mathcal M} \pi_\sigma(B) J^{\mathcal M} \to \mathbf B(\rL^2(\mathcal M)) : \pi_\sigma(a)\otimes J^{\mathcal M} \pi_\sigma(b) J^{\mathcal M} \mapsto \pi_\sigma(a) \, J^{\mathcal M} \pi_\sigma(b) J^{\mathcal M}$$ 
is continuous with respect to the minimal tensor norm. Then the proof of \cite[Proposition 4.2]{Oz04} applies {\em mutatis mutandis} to show that the map $\nu_{\mathcal B}$ is continuous with respect to the minimal tensor norm.
\end{proof}
We will apply Proposition \ref{AO in crossed product} in Theorem \ref{theorem for thmA 1} (in the case when $\mathcal B = B$) and in Theorem \ref{theorem for thmA 2} (in the general case).

\subsection*{Ozawa's condition (AO) in ultraproduct von Neumann algebras}

In this subsection, we prove a version of Ozawa's condition (AO) in the framework of ultraproduct von Neumann algebras. Although we will not use this result in this paper, we nevertheless mention it since we believe it is interesting in its own right. 

We keep the same notation as in the previous subsection and we moreover assume that $B = \mathcal B$. Let $\omega \in \beta(\N) \setminus \N$ be any nonprincipal ultrafilter. Denote by $(M^\omega, \rL^2(M^\omega), J^{M^\omega}, \mathfrak P^{M^\omega})$ a standard form for $M^\omega$ and by $e_{B^\omega} : \rL^2(M^\omega) \to \rL^2(B^\omega)$ the Jones projection corresponding to the inclusion $B^\omega\subset M^\omega$. We define a (possibly degenerate and nonunital) C$^*$-subalgebra $\mathcal{K}_\omega$ and its multiplier algebra $\mathfrak{M}(\mathcal{K}_\omega)$ inside $\B(\rL^2(M^\omega))$ by
\begin{align*}
	\mathcal{K}_\omega &:= \mathrm{C}^* \left\{aJ^{M^\omega}xJ^{M^\omega} e_{B^\omega} b J^{M^\omega}yJ^{M^\omega} \mid a,b,x,y\in B\rtimes_{\rm red}\Gamma \right \} \subset \B(\rL^2(M^\omega)) \\
	\mathfrak{M}(\mathcal{K}_\omega) &:= \left \{ T \in \B(\rL^2(M^\omega))\mid T \mathcal{K}_\omega \subset \mathcal{K}_\omega \text{ and } \mathcal{K}_\omega T \subset \mathcal{K}_\omega \right\}.
\end{align*} 
Recall from Proposition \ref{AO in crossed product} (in the case when $B=\mathcal{B}$ with $\mathcal{K}:=\mathcal{K}_{\mathcal{B}}$ which is exactly \cite[Proposition 4.2]{Oz04}) that the map  
$$\nu \colon (B\rtimes_{\rm red}\Gamma) \otimes_{\rm alg} J^M(B\rtimes_{\rm red} \Gamma)J^M \rightarrow \mathfrak{M}(\mathcal{K})/\mathcal{K} : a\otimes J^M b J^M \mapsto a \, J^MbJ^M+\mathcal{K}$$
is continuous with respect to the minimal tensor norm. We now state a version of Ozawa's condition (AO) in the ultraproduct representation $\rL^2(M^\omega)$.

\begin{prop}\label{AO in ultraproduct}
	Keep the same setting as above and assume that $\Gamma$ is bi-exact and $B$ is amenable. Then the map
$$\nu_\omega : (B\rtimes_{\rm red}\Gamma) \otimes_{\rm alg} J^{M^\omega}(B\rtimes_{\rm red} \Gamma)J^{M^\omega} \rightarrow \mathfrak{M}(\mathcal{K}_\omega)/\mathcal{K}_\omega : a\otimes J^{M^\omega}bJ^{M^\omega} \mapsto a \, J^{M^\omega} bJ^{M^\omega}+\mathcal{K}_\omega$$
 is well-defined and continuous with respect to the minimal tensor norm.
\end{prop}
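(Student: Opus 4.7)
The plan is to transport the already-established continuity of $\nu$ (Proposition \ref{AO in crossed product} applied with $B=\mathcal B$) from $\B(\rL^2(M))/\mathcal K$ down to $\mathfrak{M}(\mathcal K_\omega)/\mathcal K_\omega$ via a canonical ucp map built from the Groh--Raynaud ultraproduct. Fix a faithful state $\varphi \in M_\ast$ with $\varphi\circ\rE_B=\varphi$, let $N=\prod^\omega M$, and let $p\in N$ be the support projection of $\varphi_\omega$, so that $w_\varphi$ identifies $\rL^2(M^\omega)$ with the corner $q\rL^2(M)_\omega$ for $q:=pJ^NpJ^N$, and $J^{M^\omega}=pJ^Np$ on that corner. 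Consider the ucp map
\[
\Psi : \B(\rL^2(M))\longrightarrow \B(\rL^2(M^\omega))
\]
obtained by composing the constant-sequence embedding $\B(\rL^2(M))\hookrightarrow\ell^\infty(\B(\rL^2(M)))$, the Groh--Raynaud representation $\pi^\omega$, the compression by $q$, and the spatial identification $\Ad(w_\varphi^\ast)$.

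The first step is to verify that each of $B\rtimes_{\rm red}\Gamma$, $J^M(B\rtimes_{\rm red}\Gamma)J^M$ and $e_B$ lies in the multiplicative domain of $\Psi$, so that $\Psi$ restricts to a $\ast$-homomorphism on the $\rC^*$-algebra they generate. Indeed, any $a\in B\rtimes_{\rm red}\Gamma\subset M\subset pNp$ satisfies $ap=pa=a$ and commutes with $J^NpJ^N\in N'$, hence $aq=qa$; any $J^MbJ^M$ is sent by $\pi^\omega$ to $J^NbJ^N\in N'$, which commutes with $p$ and $J^NpJ^N$; and Lemma \ref{lemma for AO in ultraproduct} asserts that $\pi^\omega((e_B)_n)$ commutes with both $p$ and $J^N$, hence with $q$. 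A direct computation using $e_Bx\xi_\varphi=\rE_B(x)\xi_\varphi$ for $x\in M$ together with the formula $w_\varphi((x_n)^\omega\xi_{\varphi^\omega})=(x_n\xi_\varphi)_\omega$ identifies $\Psi(e_B)$ with the Jones projection $e_{B^\omega}$ for the inclusion $B^\omega\subset M^\omega$. Consequently, $\Psi$ sends each generator $aJ^MxJ^Me_BbJ^MyJ^M$ of $\mathcal K$ to the corresponding generator $aJ^{M^\omega}xJ^{M^\omega}e_{B^\omega}bJ^{M^\omega}yJ^{M^\omega}$ of $\mathcal K_\omega$, so $\Psi(\mathcal K)\subset\mathcal K_\omega$.

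To conclude, take $X=\sum_i a_i\otimes J^{M^\omega}b_iJ^{M^\omega}$ with $a_i,b_i\in B\rtimes_{\rm red}\Gamma$ and its upstairs lift $\widetilde X=\sum_i a_i\otimes J^Mb_iJ^M$, which has the same minimal tensor norm. By Proposition \ref{AO in crossed product} and the definition of the quotient norm, for every $\varepsilon>0$ there exists $k\in\mathcal K$ with
\[
\Bigl\|\sum_i a_iJ^Mb_iJ^M - k\Bigr\|\leq \|X\|_{\rm min}+\varepsilon.
\]
Applying the contraction $\Psi$ and invoking $\Psi(k)\in\mathcal K_\omega$ together with the identifications above yields
\[
\Bigl\|\sum_i a_iJ^{M^\omega}b_iJ^{M^\omega}+\mathcal K_\omega\Bigr\|_{\mathfrak{M}(\mathcal K_\omega)/\mathcal K_\omega}\leq \|X\|_{\rm min}+\varepsilon,
\]
which is the desired continuity of $\nu_\omega$. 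The main obstacle will be the identification $\Psi(e_B)=e_{B^\omega}$ of the second paragraph: this rests crucially on Lemma \ref{lemma for AO in ultraproduct} and on the compatibility $\varphi=\varphi\circ\rE_B$, and it is precisely what promotes $\Psi$ from a mere ucp map to a genuine $\ast$-homomorphism on the subalgebra carrying the generators of $\mathcal K$.
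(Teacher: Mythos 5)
Your proposal is correct and follows essentially the same route as the paper: both transport the continuity of $\nu$ from Proposition \ref{AO in crossed product} to the ultraproduct via the Groh--Raynaud picture, using Lemma \ref{lemma for AO in ultraproduct} to see that the compression by $\widetilde p = pJ^NpJ^N$ acts multiplicatively on $B\red\Gamma$, $J^M(B\red\Gamma)J^M$ and $e_B$, and hence carries $\mathcal K$ into $\mathcal K_\omega$ while sending $e_B$ to $e_{B^\omega}$. (The paper packages this as a genuine $\ast$-homomorphism $\theta$ on the $\rC^*$-algebra $C+\mathcal K$ inducing $\widetilde\theta$ on the quotient, whereas you phrase it via the multiplicative domain of a globally defined ucp map and an explicit quotient-norm estimate; also, your intermediate assertion ``$ap=pa=a$'' is not needed and not quite right --- what matters, and what you correctly conclude, is only that the constant sequence commutes with $\widetilde p$.)
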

\begin{proof}
	Put
\begin{align*}
	C &:=\mathrm{C}^* \left\{ B\red\Gamma, J^M(B\red \Gamma)J^M \right\} \subset \B(\rL^2(M))\\
	C_\omega &:=\mathrm{C}^* \left\{ B\red\Gamma, J^{M^\omega}(B\red \Gamma)J^{M^\omega} \right\} \subset \B(\rL^2(M^\omega)).
\end{align*}
Observe that $C + \mathcal{K}$ (resp.\ $C_\omega + \mathcal{K}_\omega$) is a C$^*$-algebra since it is the sum of a $\rC^*$-subalgebra and an ideal in $\mathfrak{M}(\mathcal{K})$ (resp.\ $\mathfrak{M}(\mathcal{K}_\omega)$). 
\begin{claim}
There is a $\ast$-homomorphism $\theta :  C + \mathcal{K} \to \B(\rL^2(M^\omega))$ such that $\theta(x)=x$ and $\theta(J^M y J^M)= J^{M^\omega}y J^{M^\omega}$ for all $x,y\in B\red\Gamma$ and $\theta(e_B)=e_{B^\omega}$. 
\end{claim}

\begin{proof}[Proof of the Claim]
Indeed, fix any faithful state $\varphi \in M_\ast$ such that $\varphi \circ \rE_B = \varphi$ and denote by $p$ the support projection in $N =\prod^\omega M$ of the ultraproduct state $\varphi_\omega \in N_\ast$. By Lemma \ref{lemma for AO in ultraproduct}, $\pi^\omega((e_B)_n)$ commutes with $p$ and $J^N$ and hence $\pi^\omega((e_B)_n)$ commutes with $\widetilde{p}:= pJ^N p J^N$. Since $\widetilde{p}$ commutes with $\pi^\omega(M)$ and $\pi^\omega(J^M M J^M) = J^N \pi^\omega(M) J^N$, $\widetilde{p}$ commutes with $\pi^\omega(C + \mathcal{K})$. Recall that $\widetilde p N \widetilde p \cong p N p \cong M^\omega$ and $\widetilde p \rL^2(M)_\omega = \rL^2(M^\omega)$. Then the $\ast$-homomorphism
$$\theta : C + \mathcal{K} \to \B(\rL^2(M^\omega)) : T \mapsto \widetilde p \pi^\omega(T) \widetilde p$$ satisfies all the conditions of the Claim.
\end{proof}

Since $\theta(C)=C_\omega$ and $\theta(\mathcal{K}) = \mathcal{K}_\omega$, $\theta$ induces a $\ast$-homomorphism 
$$\widetilde{\theta} : (C + \mathcal{K})/\mathcal{K} \to (C_\omega + \mathcal{K}_\omega)/\mathcal{K}_\omega \subset \mathfrak{M}(\mathcal{K}_\omega)/\mathcal{K}_\omega.$$ 
Denote by $\iota : (B\rtimes_{\rm red}\Gamma) \otimes_{\min} J^{M^\omega}(B\rtimes_{\rm red} \Gamma)J^{M^\omega} \to (B\rtimes_{\rm red}\Gamma) \otimes_{\min} J^{M}(B\rtimes_{\rm red} \Gamma)J^{M}$ the tautological $\ast$-isomorphism. Then the composition map $$\nu_\omega = \widetilde{\theta}\circ \nu \circ \iota : (B\rtimes_{\rm red}\Gamma) \otimes_{\alg} J^{M^\omega}(B\rtimes_{\rm red} \Gamma)J^{M^\omega} \rightarrow \mathfrak{M}(\mathcal{K}_\omega)/\mathcal{K}_\omega$$
is continuous with respect to the minimal tensor norm. 
\end{proof}

\subsection*{Weak exactness for $\rC^*$-algebras}
To obtain structural results for von Neumann algebras $M$ satisfying Ozawa's (relative) condition (AO) \cite{Oz03, Oz04}, it is usually necessary to impose {\em local reflexivity} or {\em exactness} of the given unital $\sigma$-weakly dense $\rC^*$-algebra in $M$. Observe that in the setting of Proposition \ref{AO in crossed product}, the reduced crossed product $\rC^*$-algebra $B\red\Gamma$ need not be locally reflexive since it contains the von Neumann algebra $B$. To avoid this difficulty, Ozawa assumed in \cite[Theorem 4.6]{Oz04} that $\Gamma$ is exact and $B$ is {\em abelian} so that $B\red\Gamma$ is exact and hence locally reflexive. In \cite{Is12}, the second named author introduced a notion of {\em weak exactness} for $\rC^*$-algebras and could settle this problem. Namely, he generalized \cite[Theorem 4.6]{Oz04} under the assumptions that $\Gamma$ is exact and $B$ is  {\em amenable} (not necessarily abelian). The main idea behind this generalization was to use {\em some} exactness (or equivalently property $C'$) of the opposite algebra $(B\red \Gamma)^{\rm op}$, instead of local reflexivity of $B\red \Gamma$. In the present paper, to study more general cases, we will make use of this notion of weak exactness for $\rC^*$-algebras.

	Recall from \cite[Theorem 3.1.3(1)(ii)]{Is12} that for an inclusion of a unital $\rC^*$-algebra $A\subset M$ in a von Neumann algebra $M$, we say that \textit{$A$ is weakly exact in $M$} if for any unital $\rC^*$-algebra $C$, any $\ast$-homomorphism $\pi\colon A\otm C \to \B(K)$ which is $\sigma$-weakly continuous on $A\otimes  \C1$ has an extension $\widetilde \pi : A\otm C^{**} \to \B(K)$ which is normal on $\C1 \otimes C^{**}$. In the case when $A=M$, we simply say that $M$ is weakly exact. Here we recall the following fundamental fact.

\begin{prop}[{\cite[Proposition 4.1.7]{Is12}}]\label{weakly exact for crossed product}
Let $\Gamma$ be any exact discrete group, $B$ any $\sigma$-finite amenable (and hence weakly exact) von Neumann algebra and $\Gamma \curvearrowright B$ any action. Then the reduced crossed product $\rC^*$-algebra $B\red\Gamma$ is weakly exact in $B \rtimes \Gamma$. If moreover $\Gamma$ is countable and $B$ has separable predual, then $B\rtimes \Gamma$ is weakly exact.
\end{prop}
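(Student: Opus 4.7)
The plan is to verify the definition of weak exactness for the inclusion $B \red \Gamma \subset B \rtimes \Gamma$ directly, producing the required extension of an arbitrary $*$-homomorphism $\pi : (B \red \Gamma) \otm C \to \B(K)$ that is $\sigma$-weakly continuous on $B \red \Gamma \otimes \C 1$. The overall strategy is to separate $\pi$ into a normal ``$B\red\Gamma$-part'' and a ``$C$-part'' using normality, combine them on the maximal tensor product, and then descend back to the minimal tensor product using semidiscreteness of $B$ together with exactness of $\Gamma$.

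First I would extract the two normal pieces. The restriction $\pi|_{B \red \Gamma}$ extends uniquely to a normal $*$-homomorphism $\pi_M : B \rtimes \Gamma \to \B(K)$. Since $B \red \Gamma$ commutes with $C$ inside $(B \red \Gamma) \otm C$, the restriction $\pi|_C$ takes values in $\pi_M(B \rtimes \Gamma)'$; as a ucp $*$-homomorphism into a von Neumann algebra, it admits a unique normal extension $\pi_{C^{**}} : C^{**} \to \pi_M(B \rtimes \Gamma)'$. The commuting images of $\pi_M$ and $\pi_{C^{**}}$ induce a $*$-homomorphism $\Pi$ on the maximal tensor product $(B \rtimes \Gamma) \otimes_{\max} C^{**}$, hence on $(B \red \Gamma) \otimes_{\max} C^{**}$. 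The remaining task is to show that $\Pi$ is bounded by the minimal tensor norm on $(B \red \Gamma) \otimes_{\alg} C^{**}$; the required extension $\widetilde \pi$ is then obtained by continuous extension.

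For the minimal-norm bound I would invoke semidiscreteness of $B$ to reduce to exact intermediate $\rC^*$-algebras. Choose nets of normal ucp maps $\phi_i : B \to M_{n_i}(\C)$ and $\psi_i : M_{n_i}(\C) \to B$ with $\psi_i \circ \phi_i \to \id_B$ in the point-$\sigma$-strong topology. Using the canonical embedding $B \red \Gamma \subset B \ovt \B(\ell^2(\Gamma))$ and the covariance structure of the crossed product, I would lift these to ucp maps $\Phi_i : B \red \Gamma \to M_{n_i}(\C) \otm \rC^*_{\mathrm{r}}(\Gamma)$ and $\Psi_i : M_{n_i}(\C) \otm \rC^*_{\mathrm{r}}(\Gamma) \to B \red \Gamma$ satisfying $\Psi_i \circ \Phi_i \to \id$ pointwise $\sigma$-strongly in $B \rtimes \Gamma$. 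Because $M_{n_i}(\C)$ is nuclear and $\rC^*_{\mathrm{r}}(\Gamma)$ is exact, each intermediate algebra $M_{n_i}(\C) \otm \rC^*_{\mathrm{r}}(\Gamma)$ is exact, hence locally reflexive. Consequently the composition $\Pi \circ (\Psi_i \otimes \id_{C^{**}})$ is min-continuous on $(M_{n_i}(\C) \otm \rC^*_{\mathrm{r}}(\Gamma)) \otm C^{**}$; pulling back along $\Phi_i \otimes \id_{C^{**}}$ and passing to a $\sigma$-weak limit yields the desired bound $\|\Pi(x)\| \leq \|x\|_{\min}$ on $(B \red \Gamma) \otimes_{\alg} C^{**}$.

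The main obstacle I expect is the equivariant lifting step: since the maps $\phi_i$ and $\psi_i$ are not $\Gamma$-equivariant in general, one cannot apply them ``entrywise'' to $\sum_g b_g u_g$ and stay inside $M_{n_i}(\C) \otm \rC^*_{\mathrm{r}}(\Gamma)$. Overcoming this demands a careful corner/averaging procedure at the level of $B \ovt \B(\ell^2(\Gamma))$ (followed by a projection onto the exact subalgebra), with the error controlled by the pointwise convergence $\psi_i \circ \phi_i \to \id_B$ and by the exactness of $\Gamma$. For the final assertion, when $\Gamma$ is countable and $B$ has separable predual, $B \red \Gamma$ is $\sigma$-weakly dense in $B \rtimes \Gamma$, so any $*$-homomorphism $(B \rtimes \Gamma) \otm C \to \B(K)$ normal on $B \rtimes \Gamma$ restricts to one satisfying the hypotheses of the first part on $B \red \Gamma$; applying the first part and then extending normally in the first variable via the double commutant theorem delivers weak exactness of $B \rtimes \Gamma$ itself.
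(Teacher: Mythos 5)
The statement you are proving is quoted in the paper from \cite[Proposition 4.1.7]{Is12} without proof, so the comparison is with Isono's argument rather than with anything in the paper itself. Your framework in the first two paragraphs is correct and standard: extend $\pi|_{B\red\Gamma}$ normally to $B\rtimes\Gamma$, extend $\pi|_{C}$ normally to $C^{**}$ inside the commutant, combine the two on the maximal tensor product, and reduce everything to a minimal-norm bound for the resulting product map. All of the content, however, sits in that last bound, and the route you propose for it does not work. There is no way to manufacture ucp maps $\Phi_i : B\red\Gamma\to M_{n_i}(\C)\otm\rC^*_{\mathrm r}(\Gamma)$ and $\Psi_i$ back with $\Psi_i\circ\Phi_i\to\id$ out of non-equivariant semidiscreteness approximations $\phi_i,\psi_i$ of $B$: the covariance relation $\lambda_g b\lambda_g^*=\sigma_g(b)$ forces any $\Phi_i$ sending $b\mapsto\phi_i(b)\otimes1$ and $\lambda_g\mapsto1\otimes\lambda_g$ to satisfy $\phi_i(\sigma_g(b))\approx\phi_i(b)$, which fails for a nontrivial action. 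More fundamentally, $M_{n}(\C)\otm\rC^*_{\mathrm r}(\Gamma)$ is the wrong target: approximately factoring $B\red\Gamma$ through such tensor products is an amenability-type hypothesis on the group or the action, not a consequence of exactness, whose only role in your argument is the too-weak observation that $\rC^*_{\mathrm r}(\Gamma)$ is exact. The ``careful corner/averaging procedure'' you defer to is precisely the missing proof, not a technical refinement of a proof you have given. (Even granting such $\Phi_i,\Psi_i$, the asserted min-continuity of the ucp map $\Pi\circ(\Psi_i\otimes\id_{C^{**}})$ would need a separate justification, since local reflexivity arguments apply to $\ast$-homomorphisms, not to arbitrary ucp maps.)

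In the actual argument, exactness of $\Gamma$ enters through the amenability of the translation action $\Gamma\curvearrowright\beta\Gamma$, i.e.\ through finitely supported, approximately equivariant maps $\Gamma\to\Prob(\Gamma)$ (or $\ell^2(\Gamma)$). These produce ucp maps from $B\red\Gamma$ into corners $B\ovt\B(\ell^2(F))$ of $B\ovt\B(\ell^2(\Gamma))$ for finite $F\subset\Gamma$, and back, with the compositions converging to the identity point-$\sigma$-weakly; the local algebras $B\ovt\B(\ell^2(F))$ are amenable because $B$ is, and it is their semidiscreteness that yields the minimal-norm bound before passing to the limit. Equivalently, one embeds $B\red\Gamma$ into $(B\ovt\ell^\infty(\Gamma))\red\Gamma$, which is $\sigma$-weakly dense in an amenable von Neumann algebra. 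Your final paragraph on the ``moreover'' clause is also too quick: deducing weak exactness of $B\rtimes\Gamma$ from weak exactness of a $\sigma$-weakly dense subalgebra in it is a permanence theorem — this is exactly where separability of the predual is used — and not a formal double-commutant extension.
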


	Using this property, we prove an important lemma, which is a variant of {\cite[Lemma 5]{Oz03}} (see also \cite[Proposition 15.1.6]{BO08} and \cite[Lemma 5.1.1]{Is12}). The proof is essentially the same as the one of \cite[Proposition 15.1.6]{BO08} but we nevertheless include it for the reader's convenience.

\begin{lem}\label{lemma for AO}
	Let $M\subset \mathcal{M}$ be any inclusion of $\sigma$-finite von Neumann algebras with expectation and $(\mathcal{M}, \rL^2(\mathcal{M}), J^{\mathcal M}, \mathfrak P^{\mathcal M})$ a standard form for $\mathcal M$.  Let $C\subset M$ be any unital $\sigma$-weakly dense $\rC^*$-subalgebra, $p\in \mathcal{M}$ any nonzero projection and $\varphi : M \to p\mathcal{M}p$ any normal ucp map. We will use the identification $p \mathbf B(\rL^2(\mathcal M))p = \mathbf B(p \rL^2(\mathcal M))$.
	
Assume that the following two conditions hold:
\begin{itemize}
	\item The map 
$$\Phi : C \ota J^{\mathcal M}CJ^{\mathcal M} \to \B(p\rL^2(\mathcal{M})) : \sum_{i=1}^n x_i \otimes J^{\mathcal M}y_iJ^{\mathcal M} \mapsto \sum_{i=1}^n \varphi(x_i) \, J^{\mathcal M}y_iJ^{\mathcal M}p$$
is continuous with respect to the minimal tensor norm.
	\item The $\rC^*$-algebra $C$ is locally reflexive or $C$ is weakly exact in $M$.
\end{itemize}
Then the ucp map $\varphi : M \to p\mathcal{M}p$ has a ucp extension $\widetilde{\varphi}: \B(\rL^2(\mathcal{M})) \to(J^{\mathcal M}CJ^{\mathcal M}p)'\cap \B(p\rL^2(\mathcal{M}))$.
\end{lem}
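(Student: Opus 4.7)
The plan is a bootstrap in four steps: (i) extend $\Phi$ by min-continuity to a ucp map on $C \otm D$, where $D := J^{\mathcal M} C J^{\mathcal M}$; (ii) promote the first factor from $C$ to $M$ using local reflexivity or weak exactness, obtaining a ucp map $\Phi' : M \otm D \to \B(p\rL^2(\mathcal{M}))$ with $\Phi'(m \otimes 1) = \varphi(m)$; (iii) apply Arveson's extension theorem on the first factor to get a ucp map $\widetilde\Phi : \B(\rL^2(\mathcal{M})) \otm D \to \B(p\rL^2(\mathcal{M}))$; (iv) define $\widetilde\varphi(x) := \widetilde\Phi(x \otimes 1)$. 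The commutation of the range of $\widetilde\varphi$ with $J^{\mathcal M} C J^{\mathcal M} p$ will come from a multiplicative domain argument, enabled by the elementary identity
\[
\Phi(1 \otimes J^{\mathcal M} y^* y J^{\mathcal M}) = p J^{\mathcal M} y^* y J^{\mathcal M} p = (J^{\mathcal M} y J^{\mathcal M} p)^* (J^{\mathcal M} y J^{\mathcal M} p) = \Phi(1 \otimes J^{\mathcal M} y J^{\mathcal M})^* \Phi(1 \otimes J^{\mathcal M} y J^{\mathcal M})
\]
for every $y \in C$, which places $1 \otimes J^{\mathcal M} y J^{\mathcal M}$ in the multiplicative domain of $\Phi$ and hence of any of its ucp extensions.

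The main obstacle is step (ii). Write $B := \B(p\rL^2(\mathcal{M}))$. In the locally reflexive case, take the double-dual extension $\Phi^{**} : (C \otm D)^{**} \to B^{**}$ and precompose it with the min-continuous embedding $C^{**} \otm D \hookrightarrow (C \otm D)^{**}$ afforded by local reflexivity of $C$, producing a ucp map $\widehat\Phi : C^{**} \otm D \to B^{**}$ that is normal on $C^{**} \otimes \C 1$. The slice $\widehat\Phi(\cdot \otimes 1) : C^{**} \to B^{**}$ then coincides with $\varphi \circ \pi$ by uniqueness of normal extensions, where $\pi : C^{**} \to M$ is the canonical normal surjection; in particular it takes values in $B$. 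Combined with the multiplicative domain of $1 \otimes d$ for $d \in D$, one obtains $\widehat\Phi(c \otimes d) = \varphi(\pi(c))\,\Phi(1 \otimes d) \in B$, independent of the lift of $\pi(c)$, so $\widehat\Phi$ descends to the required ucp map $\Phi' : M \otm D \to B$. In the weakly exact case, one dilates $\Phi$ via Stinespring to a $\ast$-homomorphism $\pi_\Phi : C \otm D \to \B(K)$ chosen to be $\sigma$-weakly continuous on $C \otimes \C 1$ (possible since $\varphi|_C$ is normal on $C \subset M$), then invokes the defining property of weak exactness of $C$ in $M$ to extend $\pi_\Phi$ across the first factor, mirroring the argument of \cite[Proposition 15.1.6]{BO08} and \cite[Lemma 5.1.1]{Is12}.

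Once $\Phi'$ is available, Arveson's extension theorem provides the desired $\widetilde\Phi : \B(\rL^2(\mathcal{M})) \otm D \to B$ ucp, and setting $\widetilde\varphi(x) := \widetilde\Phi(x \otimes 1)$ gives a ucp map $\B(\rL^2(\mathcal{M})) \to B$ extending $\varphi$, since $\widetilde\varphi|_M = \Phi'(\cdot \otimes 1) = \varphi$. Finally, for any $x \in \B(\rL^2(\mathcal{M}))$ and $y \in C$, the multiplicative domain property of $1 \otimes J^{\mathcal M} y J^{\mathcal M}$ inside $\widetilde\Phi$ yields
\[
\widetilde\varphi(x) \cdot J^{\mathcal M} y J^{\mathcal M} p = \widetilde\Phi(x \otimes J^{\mathcal M} y J^{\mathcal M}) = J^{\mathcal M} y J^{\mathcal M} p \cdot \widetilde\varphi(x),
\]
so $\widetilde\varphi$ takes values in $(J^{\mathcal M} C J^{\mathcal M} p)' \cap \B(p\rL^2(\mathcal{M}))$, as required.
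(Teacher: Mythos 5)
Your proposal is correct and follows essentially the same route as the paper's proof: extend the first tensor leg from $C$ to $M$ via local reflexivity or weak exactness (the paper does this at the level of the minimal Stinespring representation rather than the bidual of $\Phi$, but the two are equivalent), apply Arveson's extension theorem in the first variable, and use the multiplicative domain of $\C 1 \otimes J^{\mathcal M}CJ^{\mathcal M}$ to land in the commutant. The one imprecision worth fixing is in the weakly exact case: the defining property of weak exactness of $C$ in $M$ extends the \emph{second} tensor factor to its bidual, so to promote the \emph{first} factor you must flip the legs and use weak exactness of $J^{\mathcal M}CJ^{\mathcal M}$ in $J^{\mathcal M}MJ^{\mathcal M}$, and the hypothesis actually needed is $\sigma$-weak continuity of the Stinespring representation on $\C 1 \otimes J^{\mathcal M}CJ^{\mathcal M}$ (which holds by minimality, since $\Phi(1 \otimes J^{\mathcal M}yJ^{\mathcal M}) = J^{\mathcal M}yJ^{\mathcal M}p$ is normal in $y$), not on $C \otimes \C 1$ as you wrote.
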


\begin{proof}
To simplify the notation, we will write $J = J^{\mathcal M}$. Observe that $\Phi = \nu \circ (\varphi \otimes \id_{JCJ})$ where $\nu : p\mathcal{M}p \ota JCJ \to \mathbf B(p \rL^2(\mathcal M))$ is the multiplication map. We first prove the following result.

\begin{claim}
The ucp map $\Phi : C \otimes_{\min} JCJ \to \B(p\rL^2(\mathcal{M}))$ can be extended to a ucp map $ \widetilde \Phi : M \otimes_{\min} JCJ \to \B(p\rL^2(\mathcal{M}))$ which is normal $M \otimes \C 1$. In particular, we have $\widetilde \Phi(x \otimes 1) = \varphi(x)$ for all $x \in M$.
\end{claim}

\begin{proof}[Proof of the Claim]
Indeed, let $(\pi, V, K)$ be a {\em minimal} Stinespring dilation for $\Phi : C \otimes_{\min} JCJ \to \B(p\rL^2(\mathcal{M}))$, that is, $\pi : C\otm JCJ\to \B(K)$ is a unital $\ast$-representation and $V : p\rL^2(\mathcal{M})\to K$ is an isometry such that the subspace $\pi(C \otimes_{\min} JCJ)V p \rL^2(\mathcal M)$ is dense in $K$ and $\Phi(x) = V^* \pi(x) V$ for all $x \in C$. By minimality of $(\pi, V, K)$ and since $\Phi$ is $\sigma$-weakly continuous on $C \otimes \C 1$ (resp.\ $\C1 \otimes JCJ$), we have that $\pi$ is also $\sigma$-weakly continuous on $C \otimes \C 1$ (resp.\ $\C 1 \otimes JCJ$). Indeed, it suffices to notice that for all $c_1, c_2 \in C$, all $x, y \in C \otimes_{\min} JCJ$ and all $\xi , \eta \in p \rL^2(\mathcal M)$, we have
\begin{align*}
\langle \pi (c_1 \otimes Jc_2J) \, \pi(x)V \xi, \pi(y)V\eta\rangle_K &= \langle V^* \pi(y^*(c_1 \otimes Jc_2J)x) V \xi , \eta\rangle_K \\
&= \langle \Phi(y^*(c_1 \otimes Jc_2J)x) \xi, \eta\rangle_K.
\end{align*}
 Since $C$ is assumed to be locally reflexive or weakly exact in $M$ (which is equivalent to saying that $JCJ$ is weakly exact in $JMJ$), the unital $\ast$-homomorphism $\pi : C\otm JCJ\to \B(K)$ always has an extension $\widetilde{\pi} : C^{**}\otm JCJ \to \mathbf B(K)$ which is normal on $C^{**} \otimes \C 1$. Observe that we do not need $\sigma$-weak continuity on $\C1\otimes JCJ$ when $C$ is locally reflexive.

Let $z\in C^{**}$ be the central projection such that $zC^{**} = M$ canonically and let $z_i\in C$ be a bounded net converging to $z$ in the $\sigma$-weak topology in $C^{**}$. Observe that $z_i \to 1_M$ $\sigma$-weakly in $M$ and recall that $\widetilde{\pi}$ (resp.\ $\pi$) is $\sigma$-weakly continuous on $C^{**}\otimes \C1 $ (resp.\ $C\otimes \C1 \subset M\otimes \C1$). 
We then have that
$$\widetilde{\pi}(z\otimes 1) = \lim_i \pi(z_i \otimes 1) = \pi(1_M \otimes 1)=1$$
and hence $\widetilde{\pi}((1-z)\otimes 1)=0$. Since $\widetilde{\pi}$ is a $\ast$-homomorphism, it satisfies $\widetilde{\pi}((z\otimes 1) x )= \widetilde{\pi}(x)$ for all $x\in C^{**}\otm JCJ$. In particular, we have $\widetilde \pi ((z \otimes 1) \, \cdot \,)|_{C \otimes_{\min} JCJ} = \pi$. Using moreover the identification $M \otimes_{\min} JCJ = z C^{**} \otimes_{\min} JCJ$, we obtain that the unital $\ast$-homomorphism $\widetilde \pi((z \otimes 1) \, \cdot \, ) : M \otimes_{\min} JCJ \to \mathbf B(K)$ is an extension of $\pi : C \otimes_{\min} JCJ \to \mathbf B(K)$ which is normal on $M \otimes \C1$. Therefore the ucp map $\widetilde \Phi = \Ad(V^*) \circ \widetilde \pi((z \otimes 1) \, \cdot \, ) : M \otimes_{\min} JCJ \to \mathbf B(p \rL^2(\mathcal M))$ is an extension of $\Phi : C \otimes_{\min} JCJ \to \B(p\rL^2(\mathcal{M}))$ which is normal on $M \otimes \C1$. In particular, we have $\widetilde \Phi(x \otimes 1) = \varphi(x)$ for all $x \in M$.
\end{proof}

We next apply Arveson's extension theorem to the ucp map $\widetilde \Phi : M \otimes_{\min} JCJ \to \B(p\rL^2(\mathcal{M}))$ and we obtain a ucp extension map that we still denote by $\widetilde \Phi : \B(\rL^2(\mathcal{M}))\otm JCJ \to \B(p\rL^2(\mathcal{M}))$. Since $\widetilde \Phi |_{\C1\otimes JCJ} : \C1 \otimes JCJ \to \mathbf B(p \rL^2(\mathcal M)) : 1 \otimes JxJ \mapsto JxJp$ is a unital $\ast$-homomorphism, $\C1\otimes JCJ$ is contained in the multiplicative domain of $\widetilde \Phi$ (see e.g.\ \cite[Section 1.5]{BO08}). Therefore, for all $u\in \mathcal{U}( C)$ and all $x\in \B(\rL^2(\mathcal{M}))$, we have 
$$\Phi(x\otimes 1) \, JuJ p = \Phi(x\otimes 1)\Phi(1\otimes JuJ)= \Phi(x\otimes  JuJ)=\Phi(1\otimes JuJ)\Phi(x\otimes 1)=JuJp \, \Phi(x\otimes 1)$$
and hence $\Phi(\B(\rL^2(\mathcal{M}))\otimes 1)\subset (JCJp)' \cap \B(p\rL^2(\mathcal{M}))$. Thus, $\widetilde{\varphi}:=\Phi(\, \cdot \, \otimes 1) : \B(\rL^2(\mathcal{M})) \to(JCJp)'\cap \B(p\rL^2(\mathcal{M}))$ is the desired ucp extension map.
\end{proof}

\section{Proofs of Theorem \ref{thmA} and Corollary \ref{corB}}\label{section-thmA}

We first prove two intermediate results, namely Theorems \ref{theorem for thmA 1} and \ref{theorem for thmA 2}, from which we will deduce Theorem \ref{thmA}. While these two results are independent from each other, their proofs are in fact very similar and use Ozawa's condition (AO) for crossed product von Neumann algebras from Section \ref{AO}. 

Theorem \ref{theorem for thmA 1} below is a spectral gap rigidity result for subalgebras with expectation $N \subset M$ of crossed product von Neumann algebras $M = B \rtimes \Gamma$ arising from arbitrary actions of bi-exact discrete groups on amenable von Neumann algebras.

\begin{thm}\label{theorem for thmA 1}
	Let $\Gamma$ be any bi-exact discrete group, $B$ any amenable $\sigma$-finite von Neumann algebra and $\Gamma \curvearrowright B$ any action. Put $M:=B\rtimes \Gamma$. Let $p\in M$ be any nonzero projection and $N\subset pMp$ any von Neumann subalgebra with expectation. Let $\omega \in \beta(\N) \setminus \N$ be any nonprincipal ultrafilter.

Then at least one of the following conditions holds true.
\begin{itemize}
	\item The von Neumann algebra $N$ has a nonzero amenable direct summand.	
	\item We have $N'\cap pM^\omega p \subset p (B^\omega \rtimes \Gamma) p$.
\end{itemize}
\end{thm}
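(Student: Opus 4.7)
I would argue by contrapositive: assuming $N'\cap pM^\omega p\not\subset p(B^\omega\rtimes\Gamma)p$, I will produce a nonzero amenable direct summand of $N$. Pick $x\in N'\cap pM^\omega p$ with $\rE_{B^\omega\rtimes\Gamma}(x)=0$ and $x\neq 0$, obtained by subtracting the $B^\omega\rtimes\Gamma$-component of any witness. Fix a faithful state $\varphi\in M_\ast$ with $\varphi\circ\rE_B=\varphi$ and consider the $N$-central vector $\xi=x\,\xi_{\varphi^\omega}\in p\rL^2(M^\omega)$. The overall idea is to combine the relative (AO) of Proposition \ref{AO in crossed product} with Lemma \ref{lemma for AO} to manufacture a norm-one projection witnessing amenability of a corner of $N$.

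Concretely, I would pass to the Groh--Raynaud picture $\rL^2(M^\omega)=\widetilde p\,\rL^2(M)_\omega$, using Lemma \ref{lemma for AO in ultraproduct} so that $\pi^\omega((e_B)_n)$ serves as the Jones projection inside the ultraproduct representation. The assumption $\rE_{B^\omega\rtimes\Gamma}(x)=0$ forces the vector state $\omega_\xi=\langle\,\cdot\,\xi,\xi\rangle$ on $\B(p\rL^2(M^\omega))$ to annihilate the generators $aJ^{M}uJ^{M}e_B b J^Mv J^M$ of the $\rC^*$-algebra $\mathcal K_B$ from Section \ref{AO} (with $a,b,u,v\in B\red\Gamma$), so $\omega_\xi$ descends to a state on $\mathfrak M(\mathcal K_B)/\mathcal K_B$. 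Simultaneously, since $x$ commutes with $N$, this state is asymptotically $N$-central.

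Next I would extract from $x$ a normal ucp map $\varphi_x : M\to pM^\omega p$ (for instance, a properly normalized version of $y\mapsto x^*yx$, or better, the map induced by the polar decomposition of $x$) with the property that $N$ lies in its multiplicative domain. Coupling $\varphi_x$ with the multiplication map yields
$$\Phi : (B\red\Gamma)\ota J^M(B\red\Gamma)J^M \to \B(p\rL^2(M^\omega)),\quad \sum_i a_i\otimes J^Mb_iJ^M\mapsto \sum_i \varphi_x(a_i)\,J^Mb_iJ^M p,$$
which is minimal-tensor continuous by Proposition \ref{AO in crossed product} combined with the vanishing of $\omega_\xi$ on $\mathcal K_B$. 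Since bi-exact groups are exact, Proposition \ref{weakly exact for crossed product} gives weak exactness of $B\red\Gamma$ in $M$, so Lemma \ref{lemma for AO} extends $\Phi$ to a ucp map $\widetilde{\varphi_x}:\B(\rL^2(M))\to (J^M(B\red\Gamma)J^Mp)'\cap \B(p\rL^2(M))$. Because $N$ sits in the multiplicative domain of $\varphi_x$, the extension $\widetilde{\varphi_x}$ is $N$-bimodular; composing with a normal conditional expectation onto $N$ and cutting by a central projection $q\in\mathcal Z(N)$ detecting where $\xi$ is nonzero produces a norm-one projection onto $Nq$, realizing $Nq$ as injective and hence amenable --- the desired contradiction.

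The step I expect to be the main obstacle is the translation between the algebraic $\rE_{B^\omega\rtimes\Gamma}$-orthogonality of $x$ and the analytic vanishing of $\omega_\xi$ on $\mathcal K_B$ needed to invoke (AO) in the quotient, together with the matching construction of $\varphi_x$ so that $\Phi$ is simultaneously (AO)-continuous and sufficiently $N$-bimodular for Lemma \ref{lemma for AO}. This is the type III-friendly replacement for Popa's trace-based spectral gap argument, and it is precisely where the authors' earlier intertwining-by-bimodules framework for type III algebras from \cite{HI15} is meant to bear the load.
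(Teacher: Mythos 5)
Your overall architecture is the paper's: take a witness $Y\in N'\cap pM^\omega p$ with $\rE_{B^\omega\rtimes\Gamma}(Y)=0$, use it to build a ucp map that annihilates $\mathcal K_B$, feed the result through Proposition \ref{AO in crossed product}, Proposition \ref{weakly exact for crossed product} and Lemma \ref{lemma for AO}, and read off a norm one projection onto a corner of $N$. But two steps as written would fail. First, your map $\Phi$ does not typecheck: $\varphi_x(a_i)\in pM^\omega p$ acts on $\rL^2(M^\omega)$ while $J^Mb_iJ^M$ acts on $\rL^2(M)$, so the product $\varphi_x(a_i)\,J^Mb_iJ^Mp$ is not defined on either Hilbert space. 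You gesture at repairing this through the Groh--Raynaud picture, i.e.\ through the ultraproduct condition (AO) of Proposition \ref{AO in ultraproduct}; but then Lemma \ref{lemma for AO} applied on $\rL^2(M^\omega)$ only lands you in $(J^{M^\omega}(B\red\Gamma)J^{M^\omega}p)'\cap\B(p\rL^2(M^\omega))$, which is strictly larger than $pM^\omega p$ (it contains $p(M'\cap M^\omega)p$), so no conditional expectation onto $N$ is available and an extra compression by the Jones projection of $M\subset M^\omega$ would be needed --- this is exactly the extra step Theorem \ref{theorem for thmA 2} has to perform. The paper's proof of Theorem \ref{theorem for thmA 1} avoids all of this by never leaving $\rL^2(M)$: writing $Y=(y_n)^\omega$, it defines $\Psi(T)=\sigma\text{-weak}\lim_{n\to\omega}y_n^*Ty_n$ directly on $\B(\rL^2(M))$, whose restriction to $M$ is the normal ucp map $\rE_\omega(Y^*\cdot\,Y)$ with values in $p_0Mp_0$; Lemma \ref{lemma for AO} is then applied with $\mathcal M=M$, and the commutant computation gives $p_0Mp_0$ on the nose.

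Second, your alternative normalization ``by the polar decomposition of $x$'' destroys the one property you cannot afford to lose: if $x=v|x|$, there is no reason why $\rE_{B^\omega\rtimes\Gamma}(v)=0$, and without that the key vanishing on $\mathcal K_B$ fails. The correct normalization multiplies $Y$ on the right by $c=(\rE_\omega(Y^*Y)p_0)^{-1/2}\in N'\cap pMp$, which preserves orthogonality to $B^\omega\rtimes\Gamma$ precisely because $c\in M\subset B^\omega\rtimes\Gamma$. Relatedly, vanishing of the vector state $\omega_\xi$ on the generators of $\mathcal K_B$ is not the operative condition: Lemma \ref{lemma for AO} needs the ucp map itself to kill the ideal $\mathcal K_B$, and this is obtained by deducing from $\rE_{B^\omega}(b^*Ya)=0$ that $\rE_B(b^*y_na)\to0$ $\sigma$-strongly, hence $\Psi(ae_Bb)=0$ for all $a,b\in M$, and then bootstrapping to all of $\mathcal K_B=\mathcal C\otm\K(\ell^2(\Gamma))$ via Lemma \ref{lemma AO3} and complete positivity. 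Finally, the intertwining machinery of \cite{HI15} that you invoke at the end plays no role in this particular theorem; it only enters in Theorem \ref{theorem for thmA 2} and in the second bullet of Theorem \ref{thmA}.
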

\begin{proof}
	Assume that $N' \cap pM^\omega p \not \subset p(B^\omega \rtimes \Gamma)p$. Let $Y \in N' \cap pM^\omega p$ be such that $Y \notin p(B^\omega \rtimes \Gamma)p$. Up to replacing $Y$ by $Y - \rE_{B^\omega \rtimes \Gamma}(Y) \neq 0$ which still lies in $N' \cap pM^\omega p$, we may assume that $Y \in N' \cap p M^\omega p$, $Y \neq 0$ and $\rE_{B^\omega \rtimes \Gamma}(Y) = 0$. Put $y = \rE_\omega(Y^*Y) \in (N' \cap pMp)^+$. Define the nonzero spectral projection $p_0 := \mathbf 1_{[\frac12\|y\|_\infty, \|y\|_\infty]}(y) \in N' \cap pMp$ and put $c := (yp_0)^{-1/2} \in N' \cap pMp$. We have 
$$\rE_\omega ((Yc)^*(Yc)) = \rE_\omega(c \, Y^*Y \, c) = c \, \rE_{\omega}(Y^*Y) \, c = c \, y \, c = p_0.$$
Up to replacing $Y$ by $Yc$ which still lies in $N' \cap pM^\omega p$, we may assume that $Y \in N' \cap p M^\omega p$, $Y \neq 0$, $\rE_{B^\omega \rtimes \Gamma}(Y) = 0$ and $\rE_\omega(Y^*Y) = p_0$. Write $Y = (y_n)^\omega$ for some $(y_n)_n \in \mathcal M^\omega(M)$. Observe that $\sigma\text{-weak} \lim_{n \to \omega} y_n^*  y_n = \rE_\omega(Y^*Y) = p_0$.

Denote by $(M, \rL^2(M), J^M, \mathfrak P^M)$ a standard form for $M = B \rtimes \Gamma$ as in Section \ref{preliminaries}. To further simplify the notation, we will write $J = J^M$ and $\mathfrak P = \mathfrak P^M$. Define the cp map 
$$\Psi : \mathbf B(\rL^2(M)) \to \mathbf B(\rL^2(M)) : T \mapsto \sigma\text{-weak} \lim_{n \to \omega} y_n^* T y_n.$$
Observe that $\Psi(1) = p_0$. Since $\Psi$ is a cp map and $\Psi(1) = p_0$ is a projection, we have $\Psi(T) = \Psi(1) \Psi(T) \Psi(1) = p_0 \Psi(T) p_0$ for every $T \in \mathbf B(\rL^2(M))$ and hence $\Psi(\mathbf B(\rL^2(M))) \subset \mathbf B(p_0\rL^2(M))$ using the identification $p_0 \mathbf B(\rL^2(M)) p_0 = \mathbf B(p_0 \rL^2(M))$. We will then regard $\Psi : \mathbf B(\rL^2(M)) \to \mathbf B(p_0\rL^2(M))$ as a ucp map. Observe that $\Psi(x) = \rE_\omega(Y^*x Y)$ for all $x \in M$ and hence $\Psi(M) \subset p_0Mp_0$ and $\Psi |_M$ is normal. Moreover, observe that $\Psi |_N : N \to \mathbf B(p_0\rL^2(M)) : x \mapsto xp_0$ and $\Psi |_{JMJ} : {JMJ} \to \mathbf B(p_0\rL^2(M)) : JxJ \mapsto JxJ p_0$ are unital $\ast$-homomorphisms. We will denote by $\psi := \Psi |_M : M \to p_0Mp_0 : x \mapsto \Psi(x)$. 

Let $\mathcal K_B$ as in Proposition \ref{AO in crossed product} (for $\mathcal B = B$). For all $a, b \in M$, we have 
$$\rE_{B^\omega}(b^* Y a) = \rE_{B^\omega}( \rE_{B^\omega \rtimes \Gamma}(b^* Y a)) = \rE_{B^\omega}(b^* \rE_{B^\omega \rtimes \Gamma}( Y) a) = 0.$$
Since $0 = \rE_{B^\omega}(b^* Y a) = (\rE_B(b^* y_n a))^\omega$, we obtain that $\rE_B(b^* y_n a) \to 0$ $\sigma$-strongly as $n \to \omega$. Choose any cyclic unit vector $\xi \in \mathfrak P$ such that $e_B \xi = \xi$. For all $a, b, c, d \in M$, we have
\begin{align*}
\left|\langle \Psi(a e_B b) \, c\xi , d\xi \rangle_{\rL^2(M)} \right| &= \lim_{n \to \omega} \left|\langle y_n^* a e_B b y_n \, c \xi, d\xi \rangle_{\rL^2(M)} \right| \\
&= \lim_{n \to \omega} \left| \langle   e_B b y_n c \xi, e_B a^*y_nd\xi \rangle_{\rL^2(M)} \right| \\
&= \lim_{n \to \omega}\left| \langle   e_B \, b y_n c \, e_B \xi, e_B \, a^*y_nd \, e_B\xi \rangle_{\rL^2(M)} \right| \\
&= \lim_{n \to \omega} \left| \langle   \rE_B( b y_n c) e_B\xi, \rE_B( a^*y_nd) e_B\xi \rangle_{\rL^2(M)} \right| \\
&= \lim_{n \to \omega} \left| \langle   \rE_B( b y_n c) \xi, \rE_B( a^*y_nd) \xi \rangle_{\rL^2(M)} \right| \\
& \leq \lim_{n \to \omega} \| \rE_B( b y_n c) \xi \|_{\rL^2(M)} \|\rE_B( a^*y_nd) \xi\|_{\rL^2(M)} \\
&= 0.
\end{align*}
This implies that $\Psi(a e_B b) = 0$. By construction, we have $M = B \rtimes \Gamma$ and hence $e_B$ corresponds to the projection $1 \otimes P_{\C \delta_e}$. Taking $a = \lambda_g$ and $b = \lambda_h$ for $g, h \in \Gamma$, we then obtain $\Psi \left(\C 1 \otimes \mathbf K(\ell^2(\Gamma)) \right) = 0$. Since $\Psi$ is a ucp map, we obtain $\Psi \left(\mathbf B(\rL^2(B)) \otimes_{\min} \mathbf K(\ell^2(\Gamma)) \right) = 0$ and hence $\Psi(\mathcal K_B) = 0$ using Lemma \ref{lemma AO3}. Define the ucp map $\overline \Psi : \mathfrak M(\mathcal{K}_B)/\mathcal{K}_B \to \mathbf B(p_0\rL^2(M)) : a + \mathcal K_B \mapsto \Psi(a)$.

Using Proposition \ref{AO in crossed product} in the case when $B = \mathcal B$, we may then define the ucp composition map
\begin{equation*}
\Phi = \overline \Psi \circ \nu : (B \red \Gamma) \otimes_{\min} J(B \red \Gamma)J \to \mathbf B(p_0\rL^2(M)) : a \otimes JbJ \mapsto \Psi(a \, JbJ).
\end{equation*}
Since $\Psi |_{JMJ}$ is a unital $\ast$-homomorphism  and since  $\psi = \Psi |_M$ by definition, we have $\Phi(a \otimes JbJ) = \Psi(a \, JbJ) = \Psi(a) \, \Psi(JbJ) = \psi(a) \, JbJp_0$ for all $a, b \in B \red \Gamma$. Since 
$$(J(B \red \Gamma)J p_0)' \cap \mathbf B(p_0\rL^2(M)) = p_0 (JMJ)' p_0 \cap \mathbf B(p_0\rL^2(M)) = p_0Mp_0,$$ Proposition \ref{weakly exact for crossed product} and Lemma \ref{lemma for AO} imply that the normal ucp map $\psi : M \to p_0Mp_0$ has a ucp extension $\widetilde \psi : \mathbf B(\rL^2(M)) \to p_0Mp_0$. Observe that $Np_0 \subset p_0 M p_0$ is still with expectation by \cite[Proposition 2.2]{HU15}. Denote by $\rE_{Np_0} : p_0 Mp_0 \to Np_0$ a faithful normal conditional expectation. Define the unital $\ast$-homomorphism $\iota : N \to Np_0 : x \mapsto xp_0$ and denote by $z \in \mathcal Z(N)$ the unique central projection such that $\ker(\iota) = N z^\perp$. Then $Nz \cong Np_0$ and $Nzp_0 = Np_0$. Define the ucp map $\Theta =  \iota^{-1} \circ \rE_{Np_0} \circ \widetilde \psi (z \cdot z) : \mathbf B(z\rL^2(M)) \to Nz$. Since $\Theta |_{Nz} = \id_{Nz}$, $\Theta$ is a norm one projection and hence $Nz$ is amenable. We have therefore proved that if $N' \cap pM^\omega p \not\subset p(B^\omega \rtimes \Gamma)p$, then $N$ has a nonzero amenable direct summand.
\end{proof}

\begin{thm}\label{theorem for thmA 2}
	Let $\Gamma$ be any bi-exact discrete group, $B\subset \mathcal{B}$ any inclusion of $\sigma$-finite von Neumann algebras with expectation and $\Gamma \curvearrowright \mathcal B$ any action that leaves the subalgebra $B$ globally invariant. Assume moreover that $B$ is amenable. Put $M:=B\rtimes \Gamma\subset \mathcal{B}\rtimes \Gamma=:\mathcal{M}$. Let $p\in M$ be any nonzero projection and $N\subset pMp$ any von Neumann subalgebra with expectation. 

Then at least one of the following conditions holds true.
\begin{itemize}
	\item The von Neumann algebra $N$ is amenable.	
	\item We have $A\preceq_{\mathcal{M}} \mathcal{B}$ for any finite von Neumann subalgebra $A\subset N'\cap p\mathcal{M} p$ with expectation.
\end{itemize}
\end{thm}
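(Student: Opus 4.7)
I will argue by contrapositive: assume there exists a finite von Neumann subalgebra $A\subset N'\cap p\mathcal{M}p$ with expectation such that $A\not\preceq_{\mathcal{M}}\mathcal{B}$, and deduce that $N$ is amenable. By Theorem \ref{intertwining for type III}, I obtain a net $(w_i)_{i\in I}$ of unitaries in $\mathcal{U}(A)$ such that $\rE_{\mathcal{B}}(b^* w_i a)\to 0$ $\sigma$-strongly for all $a,b\in p\mathcal{M}$. Fix a cofinal ultrafilter $\mathcal{U}$ on $I$ and define the ucp map $\Psi:\B(\rL^2(\mathcal{M}))\to \B(p\rL^2(\mathcal{M}))$ by $\Psi(T):=\lim_{i\to \mathcal{U}} w_i^* T w_i$ in the $\sigma$-weak topology. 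Since $w_i^* w_i = p$, $w_i\in A\subset N'$, and $w_i\in \mathcal{M}$ commutes with $J^{\mathcal M}\mathcal{M}J^{\mathcal M}=\mathcal{M}'$, we have $\Psi(1)=p$, $\Psi|_N=\id_N$, and $J^{\mathcal M}\mathcal{M}J^{\mathcal M}$ lies in the multiplicative domain of $\Psi$. Set $\psi:=\Psi|_M:M\to p\mathcal{M}p$, a normal ucp map.

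The key computation is $\Psi(a\, e_{\mathcal{B}}\, b)=0$ for all $a,b\in \mathcal{M}$. Fix a faithful normal state $\varphi=\varphi_{\mathcal B}\circ \rE_{\mathcal B}\in \mathcal{M}_*$ with standard vector $\xi_\varphi\in \mathfrak P^{\mathcal M}$ satisfying $e_{\mathcal B}\xi_\varphi=\xi_\varphi$. Using $e_{\mathcal B}x\xi_\varphi=\rE_{\mathcal B}(x)\xi_\varphi$ for $x\in \mathcal{M}$ and $w_i=pw_ip$, Cauchy--Schwarz gives, for $c,d\in \mathcal{M}$,
\begin{equation*}
|\langle \Psi(a\, e_{\mathcal B}\, b)\, pc\xi_\varphi, pd\xi_\varphi\rangle| \leq \lim_{i\to \mathcal{U}} \|\rE_{\mathcal B}((bp)\, w_i\, (pc))\xi_\varphi\| \cdot \|\rE_{\mathcal B}((a^*p)\, w_i\, (pd))\xi_\varphi\| = 0,
\end{equation*}
each factor vanishing by the intertwining failure applied to the pairs $(pc,pb^*),(pd,pa)\subset p\mathcal{M}$. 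Density of $\{pc\xi_\varphi:c\in \mathcal{M}\}$ in $p\rL^2(\mathcal M)$ then gives $\Psi(a\, e_{\mathcal B}\, b)=0$. Specializing to $a=\lambda_g, b=\lambda_h$ yields $\Psi(\C 1\otm \K(\ell^2(\Gamma)))=0$; a standard Schwarz-inequality argument extends this to $\Psi(\B(\rL^2(\mathcal B))\otm \K(\ell^2(\Gamma)))=0$, and hence $\Psi$ vanishes on $\mathcal{K}_{\mathcal B}$ by Lemma \ref{lemma AO3}. Composing with Proposition \ref{AO in crossed product} produces a min-continuous ucp map $\Phi:(B\red \Gamma)\otm J^{\mathcal M}(B\red \Gamma)J^{\mathcal M}\to \B(p\rL^2(\mathcal M))$ satisfying $\Phi(a\otimes J^{\mathcal M}bJ^{\mathcal M})=\psi(a)\,J^{\mathcal M}bJ^{\mathcal M}p$.

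Next I apply Lemma \ref{lemma for AO} with $C=B\red \Gamma$, which is weakly exact in $M$ by Proposition \ref{weakly exact for crossed product}, to extend $\psi$ to a ucp map $\widetilde{\psi}:\B(\rL^2(\mathcal M))\to (J^{\mathcal M}(B\red \Gamma)J^{\mathcal M}p)'\cap \B(p\rL^2(\mathcal M))=p\mathcal{M}_1 p$, where $\mathcal{M}_1:=\langle \mathcal{M},e_M\rangle=(J^{\mathcal M}MJ^{\mathcal M})'$ denotes the Jones basic construction of $M\subset \mathcal{M}$. The main obstacle---absent in the proof of Theorem \ref{theorem for thmA 1} since there $\mathcal{M}=M$ forces $\mathcal{M}_1=\mathcal{M}$---is descending from $p\mathcal{M}_1 p$ back to $pMp$. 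I resolve this via the pull-down identity $e_M\mathcal{M}_1 e_M=Me_M$: the projection $pe_M$ commutes with $p\in M$ and with $N\subset M$, and compression yields $pe_M\mathcal{M}_1 pe_M=pMp\cdot pe_M$, canonically $*$-isomorphic to $pMp$ via $\iota:y\mapsto y\cdot pe_M$. The composition $\xi:=\iota^{-1}\bigl(pe_M\,\widetilde{\psi}(\cdot)\,pe_M\bigr):\B(\rL^2(\mathcal M))\to pMp$ is then ucp with $\xi(1)=p$ (since $pe_M\cdot p\cdot pe_M=pe_M=\iota(p)$) and $\xi|_N=\id_N$ (since $\widetilde{\psi}(x)=x$ commutes with $pe_M$ for $x\in N$).

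To conclude, composing with the conditional expectation $\rE_N:pMp\to N$ afforded by the assumption that $N\subset pMp$ is with expectation yields a norm-one projection $\rE_N\circ \xi:\B(\rL^2(\mathcal M))\to N$, which witnesses the injectivity and hence the amenability of $N$. The central new difficulty in this proof, compared to that of Theorem \ref{theorem for thmA 1}, is precisely this Jones-compression descent step: the natural output of Lemma \ref{lemma for AO} lands in the basic construction $\mathcal{M}_1$ rather than in $\mathcal{M}$, and one must exploit the pull-down identity to recover $pMp$ before projecting onto $N$.
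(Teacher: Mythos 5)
Your proof is correct and follows essentially the same route as the paper's: the same intertwining net from Theorem \ref{intertwining for type III}, the same ucp ultralimit map $\Psi$ killing $\mathcal{K}_{\mathcal{B}}$, the same appeal to Proposition \ref{AO in crossed product}, Proposition \ref{weakly exact for crossed product} and Lemma \ref{lemma for AO}, and the same final descent from the basic construction $(J^{\mathcal M}(B\red\Gamma)J^{\mathcal M}p)'\cap\B(p\rL^2(\mathcal M))$ to $pMp$ by compressing with the Jones projection $e_M$ (which is exactly how the paper handles what you rightly identify as the new difficulty relative to Theorem \ref{theorem for thmA 1}). The one step you assert without justification is the normality of $\psi=\Psi|_M$, which Lemma \ref{lemma for AO} requires as a hypothesis and which does not follow automatically for a pointwise $\sigma$-weak ultralimit; the paper proves it by using the finiteness of $A$ and the fact that $A\subset p\mathcal Mp$ is with expectation to produce a faithful normal state $\varphi\in(p\mathcal Mp)_*$ with $A\subset(p\mathcal Mp)^{\varphi}$, whence $\varphi\circ\Psi|_{p\mathcal Mp}=\varphi$ and normality follows. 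This is a minor omission in the write-up rather than a flaw in the approach.
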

\begin{proof}
Since the proof is very similar to the one of Theorem \ref{theorem for thmA 1}, we will simply sketch it and point out the necessary changes compared to Theorem \ref{theorem for thmA 1}. Denote by $(\mathcal M, \rL^2(\mathcal M), J^{\mathcal M}, \mathfrak P^{\mathcal M})$ a standard form for $\mathcal M = \mathcal B \rtimes \Gamma$ as in Section \ref{preliminaries}. Suppose that there exists a finite von Neumann subalgebra $A\subset N'\cap p\mathcal{M}p$ with expectation such that $A\not\preceq_{\mathcal{M}} \mathcal{B}$. Observe that since $N'\cap p\mathcal{M}p \subset p\mathcal Mp$ is with expectation, so is $A\subset p\mathcal{M}p$. We will show $N$ is amenable. We will use the identifications $p\mathbf B(\rL^2(M))p = \mathbf B(p \rL^2(M))$ and $p\mathbf B(\rL^2(\mathcal M))p = \mathbf B(p \rL^2(\mathcal M))$.

	Take a net of unitaries $(u_i)_{i\in I}$ in $\mathcal{U}(A)$ as in Theorem \ref{intertwining for type III}($\rm ii$) such that $\rE_\mathcal{B}(b^* u_i a) \to 0$ $\sigma$-strongly for any $a,b \in \mathcal{M}$. 
Fix a cofinal ultrafilter $\mathcal{U}$ on the directed set $I$ and define the ucp map
$$\Psi : \B(\rL^2(\mathcal{M})) \to \B(p\rL^2(\mathcal{M})): T \mapsto \sigma\text{-weak } \lim_{i\to \mathcal{U}} u_i^* T u_i.$$
Observe that $\Psi|_{\mathcal M} : \mathcal{M}\to p\mathcal{M}p$ is normal. Indeed, since $A \subset p\mathcal{M}p$ is finite and with expectation and since $\mathcal M$ is $\sigma$-finite, there exists a faithful state $\varphi \in (p \mathcal M p)_\ast$ such that $A\subset (p \mathcal{M} p)^\varphi$. Since $u_i \in \mathcal U(A)$ for all $i \in I$, this implies that $(\varphi \circ \Psi)(p  x  p) = \varphi(x)$ for all $x \in p \mathcal M p$. Since $\varphi$ is faithful and normal and since $\Psi = \Psi (p \cdot p)$, it follows that $\Psi|_{\mathcal M} : \mathcal M \to p\mathcal Mp$ is indeed normal. Moreover, we have $\Psi (x) = x$ for all $x \in N$.

	Let $\mathcal{K}_{\mathcal{B}}$ be as in Proposition \ref{AO in crossed product}. By a reasoning entirely similar to the one of the proof of Theorem \ref{theorem for thmA 1}, we have $\Psi(\mathcal{K}_{\mathcal{B}}) = 0$. Define the ucp map $\overline \Psi : \mathfrak M(\mathcal K_{\mathcal B})/\mathcal K_{\mathcal B} \to \mathbf B(p\rL^2(\mathcal M)) : a + \mathcal K_{\mathcal B} \mapsto \Psi(a)$. Using Proposition \ref{AO in crossed product}, we may then define the ucp composition map
\begin{equation*}
\overline \Psi \circ \nu_{\mathcal B} : (B \red \Gamma) \otimes_{\min} J^{\mathcal M}(B \red \Gamma)J^{\mathcal M} \to \mathbf B(p\rL^2(\mathcal M)) : a \otimes J^{\mathcal M}bJ^{\mathcal M} \mapsto \Psi(a \, J^{\mathcal M}bJ^{\mathcal M}).
\end{equation*}
Proposition \ref{weakly exact for crossed product} and Lemma \ref{lemma for AO} imply that the normal ucp map $\psi := \Psi |_M : M \to p\mathcal Mp$ has a ucp extension $\widetilde \psi : \mathbf B(\rL^2(\mathcal M)) \to (J^{\mathcal M}(B\red \Gamma)J^{\mathcal M}p)'\cap  \B(p\rL^2(\mathcal{M}))$. Denote by $e_M : \rL^2(\mathcal M) \to \rL^2(M)$ the Jones projection corresponding to the inclusion $M \subset \mathcal M$. We then have the identifications $e_M \B(p\rL^2(\mathcal M)) e_M = \B(p\rL^2(M))$ and $e_M J^{\mathcal M} e_M = J^M$. Then we have 
$$e_M \left( (J^{\mathcal M}(B\red \Gamma)J^{\mathcal M}p)'\cap  \B(p\rL^2(\mathcal{M})) \right) e_M = (J^M(B\red \Gamma)J^Mp)'\cap  \B(p\rL^2({M})) = pMp$$
and hence the ucp map $\widetilde \Psi := e_M \, \widetilde \psi(\, \cdot \,) \, e_M : \B(\rL^2(\mathcal M)) \to pMp$ takes indeed values in $pMp$. Moreover, we have $\widetilde \Psi(x) = x$ for all $x \in N$. If we denote by $\rE_N : p M p \to N$ a faithful normal conditional expectation, the ucp map $\Theta = \rE_N \circ \widetilde \Psi(p \cdot p) : \mathbf B(p \rL^2(\mathcal M)) \to N$ is a norm one projection. Therefore, $N$ is amenable.
\end{proof}

\begin{proof}[Proof of Theorem \ref{thmA}]
	Suppose that $N$ has no amenable direct summand. Then by Theorem \ref{theorem for thmA 1}, we have $N'\cap pM^\omega p \subset p(B^\omega \rtimes \Gamma) p$. We then apply Theorem \ref{theorem for thmA 2} to $N$ in the case when $\mathcal{B}:=B^\omega$ and we obtain $A\preceq_{B^\omega \rtimes \Gamma} B^\omega$ for any finite von Neumann subalgebra $A \subset N'\cap pM^\omega p$ with expectation. 
\end{proof}

\begin{proof}[Proof of Corollary \ref{corB}]
	Let $N\subset M$ be any von Neumann subalgebra with expectation such that $N' \cap M^\omega$ has no type ${\rm I}$ direct summand. Denote by $p \in \mathcal Z(N)$ the unique central projection such that $Np$ has no amenable direct summand and $N(1 - p)$ is amenable. Assume by contradiction that $p \neq 0$. Then $(Np)' \cap p M^\omega p \subset p(B^\omega \rtimes \Gamma)p$ by Theorem \ref{thmA} and $(Np)' \cap p M^\omega p = p(N' \cap M^\omega)p$ has no type ${\rm I}$ direct summand. By \cite[Corollary 8]{CS78} (see also \cite[Theorem 11.1]{HS90}), $(Np)'\cap pM^\omega p$ contains a copy of the hyperfinite $\rm II_1$ factor $R$ with expectation. We then have $R\preceq_{B^\omega \rtimes \Gamma} B^\omega$ by Theorem \ref{thmA}. Since $R$ is of type ${\rm II_1}$ and $B^\omega$ is abelian and hence of type ${\rm I}$, we obtain a contradiction. Therefore, $p = 0$ and $N$ is amenable.
\end{proof}

\section{Proof of Theorem \ref{thmC}}

We start by proving a useful lemma which can be regarded as a generalization of the first part of the proof of \cite[Proposition C]{Ho15}.

\begin{lem}\label{lem-strong-ergodicity}
Let $\mathcal R$ be any strongly ergodic nonsingular equivalence relation defined on a standard probability space $(X, \mu)$. Put $A = \rL^\infty(X)$ and $M = \rL(\mathcal R)$. Denote by $\rE_A : M \to A$ the unique faithful normal conditional expectation. Fix any faithful state $\tau \in A_\ast$ and put $\varphi = \tau \circ \rE_A \in M_\ast$.

If $M$ is not full, then there exists a sequence of unitaries $u_n \in \mathcal U(M)$ such that the following conditions hold:
\begin{itemize}
\item[$\rm(i)$]  $\lim_n \|u_n \varphi - \varphi u_n\| = 0$,
\item[$\rm(ii)$]  $\lim_n \|x u_n - u_n x\|_\varphi = 0$ for all $x \in M$ and
\item[$\rm(iii)$]  $\lim_n \|\rE_A(x u_n y)\|_\varphi = 0$ for all $x, y \in M$.
\end{itemize}
\end{lem}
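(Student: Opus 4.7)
The plan is to produce a single unitary $u$ in the asymptotic centralizer $M_\omega$ (for a fixed nonprincipal ultrafilter $\omega$) that is maximally transverse to the ultrapower subalgebra $\widetilde M := A^\omega \vee M \subset M^\omega$, and then convert it into a sequence by a diagonal argument. Since $\tau$ is automatically tracial on the abelian algebra $A$, one checks that $A$ sits in the centralizer $M^\varphi$, so $\widetilde M$ is globally $\sigma^{\varphi^\omega}$-invariant and carries the canonical $\varphi^\omega$-preserving conditional expectation $\rE_{\widetilde M} : M^\omega \to \widetilde M$, which furthermore satisfies $\rE_{A^\omega} \circ \rE_{\widetilde M} = \rE_{A^\omega}$. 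For a centralizing unitary $u \in M_\omega$, the identity $xuy = u(xy)$ gives $\rE_{A^\omega}(xuy) = \rE_{A^\omega}(\rE_{\widetilde M}(u)\, xy)$, so condition $\rm(iii)$ for any lift of $u$ will follow from the single ultraproduct identity $\rE_{\widetilde M}(u) = 0$.

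The key structural input is $M' \cap \widetilde M = \C 1$. The inclusion $A^\omega \subset \widetilde M$ is again Cartan, since $A^\omega$ is maximal abelian in $M^\omega$ (hence in $\widetilde M$) by Popa's theorem and is still normalized by $\mathcal N_M(A)$. Writing any $Y \in \widetilde M$ through its $A^\omega$-valued Fourier decomposition $Y = \rE_{A^\omega}(Y) + \sum_\varphi Y_\varphi v_\varphi$ along the partial normalizers coming from fixed-point-free elements of $[\mathcal R]$, commutation of $Y$ with $A$ yields $Y_\varphi(\alpha_\varphi(a) - a) = 0$ for every $a \in A$, and a measurable covering of the domain of each such $\varphi$ by finitely many subsets $B_j$ with $B_j \cap \alpha_\varphi(B_j) = \emptyset$ forces $Y_\varphi\, v_\varphi = 0$. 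Hence $Y \in A^\omega$, and then commutation of $Y$ with $\mathcal N_M(A)$ places $Y$ in $(A^\omega)^{[\mathcal R]}$, which equals $\C 1$ by strong ergodicity of $\mathcal R$. I expect this Fourier-analytic step over the pseudogroup associated with $\mathcal R$ (as opposed to the simpler discrete-group analogue already used elsewhere in the paper) to be the main technical obstacle.

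Assume now that $M$ is not full, so that $M_\omega \neq \C 1$. Since $M = \rL(\mathcal R)$ is a factor with separable predual, by Connes' work on full factors $M_\omega$ is a diffuse finite von Neumann algebra with faithful tracial state $\tau_\omega := \varphi^\omega|_{M_\omega}$. Pick a projection $p \in M_\omega$ with $\tau_\omega(p) = 1/2$ and set $u := 2p - 1 \in \mathcal U(M_\omega)$. Since $\rE_{\widetilde M}$ is $M$-bimodular and $u$ commutes with $M$, the element $\rE_{\widetilde M}(u)$ lies in $M' \cap \widetilde M = \C 1$, and together with $\varphi^\omega \circ \rE_{\widetilde M} = \varphi^\omega$ this pins down $\rE_{\widetilde M}(u) = \tau_\omega(u)\,1 = 0$. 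Lifting $u$ to a sequence $(v_n)_n \subset \mathcal U(M)$ via Lemma \ref{lem-ultraproduct} then gives $\rm(i)$--$\rm(iii)$ along $\omega$. The ultrafilter-level conclusion being available along every nonprincipal ultrafilter, a standard diagonalization against a countable $\|\cdot\|_\varphi$-dense subset of $M$ upgrades these to honest limits as $n \to \infty$, producing the desired sequence.
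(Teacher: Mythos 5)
Your reduction of condition $\rm(iii)$ to the single identity $\rE_{\widetilde M}(u)=0$, and of that to the statement $M'\cap\widetilde M=\C 1$ for $\widetilde M=A^\omega\vee M$, is clean, and the intermediate claims you use along the way (that $A\subset M^\varphi$, that $\widetilde M$ is with $\varphi^\omega$-preserving expectation satisfying $\rE_{A^\omega}\circ\rE_{\widetilde M}=\rE_{A^\omega}$, that $M_\omega$ is diffuse when $M$ is not full, and the lifting/diagonalization at the end) are all correct. The statement $M'\cap\widetilde M=\C1$ is in fact true. But your proof of it has a genuine gap at its central step: you decompose an arbitrary $Y\in\widetilde M$ as $Y=\rE_{A^\omega}(Y)+\sum_g Y_g v_g$ with coefficients in $A^\omega$ indexed by the pseudogroup of $\mathcal R$, and you use both directions of this decomposition --- analysis (computing the coefficients and showing they vanish) and synthesis (concluding $Y\in A^\omega$, and ultimately $Y\in\C1$, because the coefficients vanish). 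The synthesis direction is exactly what is not routine here: $\widetilde M$ is a nonseparable algebra generated by $A^\omega$ and $M$, the state $\varphi^\omega$ is not a trace, and $\rL(\mathcal R)$ is not a group crossed product, so the assertion that an element of $\widetilde M$ is determined by the family $\rE_{A^\omega}(Y u_g^*)$, $g\in[\mathcal R]$, requires a separate argument (e.g.\ that $\varphi^\omega(m^*b^*Y)=0$ for all $b\in A^\omega$ and all $m\in M$ forces $Y\xi_{\varphi^\omega}=0$, which needs an $\rL^2$-density argument using $\sigma^{\varphi^\omega}$-analytic elements of $M$ because one must move the $M$-leg across the non-tracial state). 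You flag this as ``the main technical obstacle'' but do not resolve it, and as written the whole proof routes through it. A second, smaller gap: the covering of the support of a fixed-point-free partial isomorphism by finitely many Borel sets disjoint from their images is a Borel $3$-coloring fact that also deserves justification.

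It is worth seeing how the paper avoids this entirely: it never reconstructs an element of $A^\omega\vee M$ from its coefficients. Starting from a centralizing sequence $(u_n)_n$ with $u_n\to0$ $\sigma$-weakly (obtained from \cite[Theorem A]{HR14} and Lemma \ref{lem-ultraproduct}), it proves $\rm(iii)$ coefficient by coefficient: first $\|\rE_A(u_n)\|_\varphi\to0$ via strong ergodicity (your $M'\cap A^\omega=\C1$ step), then $\|\rE_A(u_nu_g^*)\|_\varphi\to0$ for each \emph{involution} $g\in[\mathcal R]$ by splitting off the fixed-point set $X_g$ and using Hoff's halving projection $z$ with $z+u_gzu_g^*=1-z_g$ and $z\perp u_gzu_g^*$ --- for an involution the free part always $2$-colors, which is why the coloring issue disappears --- and finally invokes Feldman--Moore to get $\sigma$-strong density of $\spn\{au_g: a\in A,\ g\in[\mathcal R],\ g^2=1\}$ in $M$. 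Since condition $\rm(iii)$ is itself a statement about coefficients, this one-directional argument suffices. Your stronger claim $M'\cap\widetilde M=\C1$ is a nice structural statement, but proving it costs you precisely the synthesis step that the lemma does not actually require; either supply that step (via the modular density argument sketched above) or bypass it by arguing directly, as the paper does, that $\rE_{A^\omega}(u\,x)=0$ for all $x$ in a generating family of $M$.
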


\begin{proof}
Assume that $M$ is not full. Then $M' \cap (M^\omega)^{\varphi^\omega}$ is diffuse by \cite[Corollary 2.6]{HR14} for any nonprincipal ultrafilter $\omega \in \beta(\N) \setminus \N$. Then a combination of the first part of the proof of \cite[Theorem A]{HR14} and Lemma \ref{lem-ultraproduct} shows that there exists a sequence of unitaries $u_n \in \mathcal U(M)$ such that the following conditions hold:
\begin{itemize}
\item[$\rm(i)$]  $\lim_n \|u_n \varphi - \varphi u_n\| = 0$,
\item[$\rm(ii)$]  $\lim_n \|x u_n - u_n x\|_\varphi = 0$ for all $x \in M$ and
\item[$\rm(iii)$]  $u_n \to 0$ $\sigma$-weakly as $n \to \infty$.
\end{itemize}
It remains to prove that Conditions $\rm(i), \rm(ii), \rm(iii)$ imply that $\lim_n \|\rE_A(x u_n y)\|_\varphi = 0$ for all $x, y \in M$. The rest of the proof is entirely analogous to the first part of the proof of \cite[Proposition C]{Ho15} and we only give the details for the sake of completeness. Observe that for every nonprincipal ultrafilter $\omega \in \beta(\N) \setminus \N$, Condition $\rm(i)$ implies that $(u_n)_n \in \mathcal M^\omega(M)$ and Conditions $\rm(ii)$ and $\rm(iii)$ imply that $(u_n)^\omega \in M' \cap (M^\omega)^{\varphi^\omega}$ and $\varphi^\omega((u_n)^\omega) = 0$. We start by proving the following claim.

\begin{claim}
We have $\lim_n \|\rE_A(u_n)\|_\varphi = 0$
\end{claim}

\begin{proof}[Proof of the Claim]
Let $g \in [\mathcal R]$ be any element and denote by $u_g \in \mathcal U(\rL(\mathcal R))$ the corresponding unitary element. Since $u_g \rE_A(u_n) u_g^* = \rE_A(u_g u_n u_g^*)$, we have
\begin{align*}
\|\rE_A(u_n) u_g^* - u_g^* \rE_A(u_n)\|_\varphi &= \|u_g \rE_A(u_n) u_g^* - \rE_A(u_n)\|_\varphi \\
&= \|\rE_A(u_g u_n u_g^* - u_n)\|_\varphi \\
&\leq \|u_g u_n u_g^* - u_n\|_\varphi \\
&= \| u_n u_g^* - u_g^* u_n\|_\varphi \to 0 \quad \text{as} \quad n \to \infty.
\end{align*}
Define $\mathcal E= \spn \left\{a u_g \mid a \in A, g \in [\mathcal R] \right\}$ and observe that $\mathcal E$ is a unital $\sigma$-strongly dense $\ast$-subalgebra of $M$. The above calculation implies that $\lim_n \|x \rE_A(u_n) - \rE_A(u_n) x\|_\varphi = 0$ for every $x \in \mathcal E$. Let $\omega \in \beta(\N) \setminus \N$ be any nonprincipal ultrafilter. For every $x \in \mathcal E$, we have $\|x \rE_{A^\omega}((u_n)^\omega) - \rE_{A^\omega}((u_n)^\omega) x\|_{\varphi^\omega} = \lim_{n \to \omega} \|x \rE_A(u_n) - \rE_A(u_n) x\|_\varphi = 0$ and hence we have $x \rE_{A^\omega}((u_n)^\omega) = \rE_{A^\omega}((u_n)^\omega) x$. Since $\mathcal E$ is $\sigma$-strongly dense in $M$, this further implies that $x \rE_{A^\omega}((u_n)^\omega) = \rE_{A^\omega}((u_n)^\omega) x$ for every $x \in M$. Since $\mathcal R$ is strongly ergodic, this implies that $\rE_{A^\omega}((u_n)^\omega) = \varphi^\omega ((u_n)^\omega) 1 = 0$ and hence $\lim_{n \to \omega} \|\rE_A(u_n)\|_\varphi = \|\rE_{A^\omega}((u_n)^\omega)\|_{\varphi^\omega} = 0$. Since this is true for every $\omega \in \beta(\N) \setminus \N$, we finally obtain that $\lim_n \|\rE_A(u_n)\|_\varphi = 0$.
\end{proof}

We can now finish the proof of Lemma \ref{lem-strong-ergodicity}. Let $g \in [\mathcal R]$ be any element such that $g^2 = 1$. Put $X_g = \{s \in X \mid g \cdot s = s\}$ and observe that $u_g = u_g^*$, $z_g := \rE_A(u_g) = \mathbf 1_{X_g}$ and $u_g^* z_g = z_g u_g^* = z_g$. 
Since $A$ is abelian and hence tracial, a combination of the fact that $u_g^* z_g \in A$ and the Claim implies that 
$$\|\rE_A(u_n u_g^*) z_g \|_\varphi = \|\rE_A(u_n \, u_g^* z_g) \|_\varphi = \|\rE_A(u_n) \, u_g^* z_g \|_\varphi \leq \|\rE_A(u_n) \|_\varphi  \to 0 \quad \text{as} \quad n \to \infty.$$
Denote by $\mathcal J$ the nonempty directed set (for the inclusion) of all the families of projections $(z_i)_{i \in I}$ in $A$ such that $z_i \leq 1 - z_g$, $z_i \perp z_j$ for all $i \neq j \in I$ and $z_i \perp u_g z_j u_g^*$ for all $i, j \in I$. By Zorn's lemma, let $(z_i)_{i \in I}$ be a maximal element in $\mathcal J$. Put $z = \sum_{i \in I} z_i$ and assume by contradiction that $z + u_g z u_g^* \neq 1 - z_g$. Put $e = 1 - z_g - z - u_gz u_g^* \neq 0$. Since $g^2=1$, we have $u_geu_g^*=e$. Since $e\leq 1-z_g=\mathbf 1_{\{s \in X \mid g\cdot s\neq s\}}$, we can find $0 \neq z' \leq e$ such that $z' \perp u_g z' u_g^*$. Then the family $((z_i)_{i \in I}, z')$ is in $\mathcal J$ and this contradicts the maximality of the family $(z_i)_{i \in I}$ in $\mathcal J$. Therefore, we have $z + u_g z u_g^* = 1 - z_g$. A calculation entirely analogous to \cite[Proposition C, Equation (6.6)]{Ho15} shows that
\begin{align*}
\|\rE_A(u_n u_g^*)(1 - z_g)\|_\varphi^2 &= \|\rE_A(u_n u_g^*)(z + u_g z u_g^*)\|_\varphi^2 \\
&= \|\rE_A(u_n u_g^*)z\|_\varphi^2 + \|\rE_A(u_n u_g^*)u_g z u_g^*\|_\varphi^2 \\
&= \|\rE_A(u_n u_g^*)(z - u_g z u_g^*)\|_\varphi^2 \\
&= \|\rE_A((z u_n - u_n z) u_g^*)\|_\varphi^2.
\end{align*}
Since $zu_n - u_n z \to 0$ $\sigma$-strongly as $n \to \infty$, we also have that $\rE_A((z u_n - u_n z) u_g^*) \to 0$ $\sigma$-strongly as $n \to \infty$. The above calculation implies that $\lim_n \|\rE_A(u_n u_g^*)(1 - z_g)\|_\varphi = 0$. This further implies that 
$$\limsup_n \|\rE_A(u_n u_g^*) \|_\varphi^2 = \limsup_n \left(\|\rE_A(u_n u_g^*)z_g \|_\varphi^2 + \|\rE_A(u_n u_g^*)(1 - z_g) \|_\varphi^2\right) = 0.$$

Define $\mathcal F = \spn \{ au_g \mid a \in A, g \in [\mathcal R], g^2 = 1\}$. By the proof of \cite[Theorem 1]{FM75}, it follows that $\mathcal F$ is a $\sigma$-strongly dense linear $\ast$-subspace of $M$. The previous reasoning shows that $\lim_n \|\rE_A(u_n x)\|_\varphi = 0$ for every $x \in \mathcal F$. Let $\omega \in \beta(\N) \setminus \N$ be any nonprincipal ultrafilter. For every $x \in \mathcal F$, we have $\|\rE_{A^\omega}((u_n)^\omega x)\|_{\varphi^\omega} = \lim_{n \to \omega} \|\rE_A(u_n x)\|_\varphi = 0$ and hence $\rE_{A^\omega}((u_n)^\omega x) = 0$. Since $\mathcal F$ is $\sigma$-strongly dense in $M$, this further implies that $\rE_{A^\omega}((u_n)^\omega x) = 0$ for every $x \in M$. Using Condition $(\rm ii)$, we also have $\rE_{A^\omega}(x (u_n)^\omega y) = \rE_{A^\omega}((u_n)^\omega xy) = 0$ for every $x, y \in M$. This implies that $\lim_{n \to \omega} \|\rE_A(x u_n y)\|_\varphi = \|\rE_{A^\omega}(x (u_n)^\omega y)\|_{\varphi^\omega} = 0$. Since this is true for every $\omega \in \beta(\N) \setminus \N$, we finally obtain that $\lim_n \|\rE_A(x u_n y)\|_\varphi = 0$ for all $x, y \in M$.
\end{proof}

\begin{proof}[Proof of Theorem \ref{thmC}]
Simply write $B = \rL^\infty(X)$ and $M = B \rtimes \Gamma$. Assume by contradiction that $M$ is not full. Fix any nonprincipal ultrafilter $\omega \in \beta(\N) \setminus \N$. Since $\Gamma \curvearrowright (X, \mu)$ is strongly ergodic, Lemma \ref{lem-strong-ergodicity} shows that there exists $u \in \mathcal U(M_\omega)$ such that $\rE_{B^\omega}(u \lambda_s) = 0$ for every $s \in \Gamma$. Then for every $s \in \Gamma$, we have
$$\rE_{B^\omega}(\rE_{B^\omega \rtimes \Gamma}(u) \lambda_s) = \rE_{B^\omega}(\rE_{B^\omega \rtimes \Gamma}(u \lambda_s)) = \rE_{B^\omega}(u \lambda_s) = 0.$$
This implies that $\rE_{B^\omega \rtimes \Gamma}(u) = 0$. Since $M$ is a nonamenable factor, Theorem \ref{thmA} shows that $M_\omega \subset M' \cap M^\omega \subset B^\omega \rtimes \Gamma$ and hence $u \in \mathcal U(B^\omega \rtimes \Gamma)$. We then have $u = \rE_{B^\omega \rtimes \Gamma}(u) = 0$. This is a contradiction.
\end{proof}

\section{Group measure space type ${\rm III}$ factors with no central sequence}

\begin{df}
Let $G$ be any locally compact second countable group. Let $G \curvearrowright (X, \mu)$ and $G \curvearrowright (Y, \nu)$ be any nonsingular Borel actions on standard probability spaces. We say that 
\begin{itemize}
\item $G \curvearrowright (Y, \nu)$ is a {\em measurable quotient} of $G \curvearrowright (X, \mu)$ if, after discarding null $G$-invariant Borel subsets, there exists a $G$-equivariant Borel quotient map $q : X \to Y$ such that $[q_\ast \mu] = [\nu]$.
\item $G \curvearrowright (Y, \nu)$ is {\em measurably conjugate} to $G \curvearrowright (X, \mu)$ if, after discarding null $G$-invariant Borel subsets, there exists a $G$-equivariant Borel isomorphism $\theta : X \to Y$ such that $[\theta_\ast \mu] = [\nu]$.
\end{itemize}
\end{df}

Let $G$ be any locally compact second countable group and $H < G$ any closed subgroup. Endowed with the quotient topology, $G/H$ is a continuous $G$-space, that is, the action $G \curvearrowright G/H$ defined by $(g, hH) \mapsto ghH$ is continuous. The quotient space $G/H$ carries, up to equivalence, a unique $G$-quasi-invariant regular Borel probability measure $\nu \in \Prob(G/H)$. Any such $G$-quasi-invariant regular Borel probability measure is associated with a rho-function for the pair $(G, H)$ (see e.g.\ \cite[Appendix B]{BdlHV08}). The action $G \curvearrowright G/H$ is indeed a measurable quotient of the translation action $G \curvearrowright G$  (see \cite[Theorem B.1.4]{BdlHV08}).

Let $G$ be any noncompact connected simple Lie group and $P < G$ any minimal parabolic subgroup (e.g.\ $G = \SL_n(\R)$ and $P = $ subgroup of upper triangular matrices, for $n \geq 2$). Fix a $G$-quasi-invariant Borel regular probability measure $\nu \in \Prob(G/P)$. Denote by $\Delta_P : P \to \R^*_+$ the modular homomorphism and observe that $\Delta_P(P) = \R^*_+$ (this follows from  \cite[Proposition B.1.6 (ii)]{BdlHV08} and \cite[Proposition 4.3.2]{Zi84}). Put $L = \ker(\Delta_P)$. The Radon-Nikodym cocycle associated with the action $G \curvearrowright G/P$ is the map defined by 
$$\Omega : G \times G/P \to \R : (g, hP) \mapsto \log \left(\frac{{\rm d} g_\ast\nu}{{\rm d}\nu}(hP)\right).$$
Observe that $\Omega : G \times G/P \to \R$ is a continuous map by \cite[Theorem B.1.4]{BdlHV08}. The Maharam extension $G \curvearrowright G/P \times \R$ is the continuous action defined by 
$$g \cdot (hP, t) = (ghP, t + \Omega(g, hP)).$$
By \cite[Lemma B.1.3]{BdlHV08}, we have $\Omega(g, P) = (\log \circ \Delta_P)(g)$ for every $g \in P$. Since moreover $(\log \circ \Delta_P)(P) = \R$, the Maharam extension $G \curvearrowright G/P \times \R$ is transitive and the stabilizer of the point $(P, 0)$ is equal to $L$. The mapping 
$$\theta : G/L \to G/P \times \R : gL \mapsto (gP, \Omega(g, P))$$
is a well-defined $G$-equivariant homeomorphism that yields a measurable conjugacy between the  action $G \curvearrowright G/L$ and the Maharam extension $G \curvearrowright G/P \times \R$. Therefore, we have proved the following useful fact.

\begin{prop}[{see \cite[Proposition 4.7]{BN11}}]\label{proposition-maharam}
The Maharam extension of $G \curvearrowright G/P$ is measurably conjugate to $G \curvearrowright G/L$.
\end{prop}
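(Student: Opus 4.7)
The plan is to verify directly that the map $\theta : G/L \to G/P \times \R$, $gL \mapsto (gP, \Omega(g, P))$, already written down in the excerpt, is a well-defined $G$-equivariant Borel isomorphism that intertwines the relevant measure classes. The three essential ingredients are the cocycle identity $\Omega(g_1 g_2, x) = \Omega(g_1, g_2 \cdot x) + \Omega(g_2, x)$ for the Radon--Nikodym cocycle of $G \curvearrowright G/P$, the formula $\Omega(p, P) = (\log \circ \Delta_P)(p)$ for $p \in P$ already recorded in the excerpt from \cite[Lemma B.1.3]{BdlHV08}, and the surjectivity $\Delta_P(P) = \R^*_+$.

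First I would check that $\theta$ descends to cosets. If $g' = g\ell$ with $\ell \in L \subset P$, then $g'P = gP$, and the cocycle identity combined with $\Omega(\ell, P) = (\log \circ \Delta_P)(\ell) = 0$ yields
$$\Omega(g\ell, P) = \Omega(g, \ell P) + \Omega(\ell, P) = \Omega(g, P).$$
The same cocycle identity gives $G$-equivariance: for $g_0, g \in G$,
$$\theta(g_0 g L) = (g_0 g P,\, \Omega(g_0, gP) + \Omega(g, P)) = g_0 \cdot (gP, \Omega(g, P)) = g_0 \cdot \theta(gL).$$

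Bijectivity then reduces to the two orbit-theoretic statements already noted in the excerpt: transitivity of the Maharam action and $\stab_G(P, 0) = L$. For transitivity I would seek, given $(hP, t)$, a representative $g = hp$ with $p \in P$; the cocycle identity gives $\Omega(hp, P) = \Omega(h, P) + (\log \circ \Delta_P)(p)$, and since $\log \circ \Delta_P$ is surjective onto $\R$, a suitable $p \in P$ exists. For the stabilizer, $g \cdot (P, 0) = (P, 0)$ forces $g \in P$, and then $(\log \circ \Delta_P)(g) = \Omega(g, P) = 0$ gives $g \in L$. Continuity of $\theta$ is immediate from continuity of $\Omega$, and continuity of $\theta^{-1}$ follows from the general fact that a continuous bijective orbit map of a transitive action of a $\sigma$-compact locally compact Polish group is a homeomorphism.

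Finally, to upgrade the homeomorphism to a measurable conjugacy in the sense of the definition above, I would use that $G/L$ carries a unique $G$-quasi-invariant measure class, which $\theta$ transports to a $G$-quasi-invariant measure class on $G/P \times \R$; by Fubini together with uniqueness, the latter coincides with $[\nu \otimes \Leb]$, the class used to build the Maharam extension. There is no serious obstacle in this proof; the only bookkeeping point requiring care is the consistent application of the cocycle identity together with the identification $\Omega|_P = \log \circ \Delta_P$, and all remaining assertions follow from standard facts about homogeneous spaces of locally compact second countable groups.
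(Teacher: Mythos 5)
Your proof is correct and follows essentially the same route as the paper: the paper's argument is precisely the identification $\Omega|_{P\times\{P\}}=\log\circ\Delta_P$, the surjectivity $\Delta_P(P)=\R^*_+$ giving transitivity of the Maharam extension with stabilizer $L$ at $(P,0)$, and the explicit orbit map $\theta(gL)=(gP,\Omega(g,P))$; you have merely filled in the cocycle-identity verifications and the measure-class bookkeeping that the paper leaves implicit. The only point to watch is the sign convention for the Radon--Nikodym cocycle (you use the identity $\Omega(g_1g_2,x)=\Omega(g_1,g_2x)+\Omega(g_2,x)$, which is the one under which the Maharam formula defines an action, and is clearly the intended reading of the paper's definition).
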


From now on, fix $n \geq 2$, $G = \SL_n(\R)$ and $\Lambda = \SL_n(\Q)$ and denote by $P < G$ the minimal subgroup of upper triangular matrices. By \cite[Theorem A and Proposition 7.4]{BISG15}, the translation action $\Lambda \curvearrowright G$ is strongly ergodic and so is the nonsingular action $\Lambda \curvearrowright G/P$ (recall that strong ergodicity is stable under taking measurable quotients). Fix a surjective group homomorphism $\pi : \F_\infty \to \Lambda$ such that $\ker(\pi) < \F_\infty$ is a nonamenable subgroup.  For simplicity, write $\Gamma = \F_\infty$.

Put $X = [0, 1]^\Gamma$ and $\mu = \Leb^{\otimes \Gamma}$ and consider the Bernoulli shift action $\Gamma \curvearrowright X$ defined by $\gamma \cdot (x_{\gamma'})_{\gamma' \in \Gamma} = (x_{\gamma^{-1}\gamma'})_{\gamma' \in \Gamma}$. Observe that $\Gamma \curvearrowright X$ preserves the Borel probability measure $\mu$ and is essentially free and strongly ergodic. Since $\ker(\pi)$ is nonamenable, the restricted action $\ker(\pi) \curvearrowright X$ is also strongly ergodic. Since $\ker(\pi) < \F_\infty$ is a nonamenable free subgroup and hence not inner amenable, the crossed product ${\rm II_1}$ factor $\rL^\infty(X) \rtimes \ker(\pi)$ is full by \cite{Ch81}. Define the  action $\Gamma \curvearrowright X \times G/P$ by 
$$\gamma \cdot (x, hP) = (\gamma x, \pi(\gamma) hP).$$
Observe that $\Gamma \curvearrowright X \times G/P$ quasi-preserves the product measure $\mu \otimes \nu$ and is essentially free.

\begin{thm}\label{thm-examples}
Keep the same notation as above. The following assertions hold true:
\begin{itemize}
\item [$\rm(i)$] The nonsingular action $\Gamma \curvearrowright X \times G/P$ is essentially free and strongly ergodic and its Maharam extension $\Gamma \curvearrowright X \times G/P \times \R$ is also essentially free and strongly ergodic.
\item [$\rm(ii)$] The group measure space factor $M = \rL^\infty(X \times G/P) \rtimes \Gamma$ is a full type ${\rm III_1}$ factor and its continuous core $\core(M)$ is a full type ${\rm II_\infty}$ factor.
\end{itemize}
\end{thm}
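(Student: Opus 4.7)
For part $\rm(i)$, essential freeness of $\Gamma \curvearrowright X \times G/P$ is immediate from essential freeness of the Bernoulli action $\Gamma \curvearrowright X$, and similarly for the Maharam extension $\Gamma \curvearrowright X \times G/P \times \R$. The content lies in strong ergodicity, and my plan is the same for both actions: exploit the subgroup $\ker(\pi) < \Gamma$, which is infinite and acts on $X$ by the Bernoulli shift (hence strongly mixing on $\rL^2(X) \ominus \C 1$) while acting trivially on the second factor. A standard ultraproduct Hilbert-space argument then shows that any $\ker(\pi)$-asymptotically invariant sequence in $\rL^\infty(X \times G/P) = \rL^\infty(X) \ovt \rL^\infty(G/P)$ must lie, asymptotically, in $\rL^\infty(G/P)$, and similarly on $X \times G/P \times \R$. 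Given this reduction, strong ergodicity of $\Gamma \curvearrowright X \times G/P$ reduces to strong ergodicity of $\Lambda \curvearrowright G/P$, which is a measurable quotient of the strongly ergodic translation action $\Lambda \curvearrowright G$ from \cite[Theorem A]{BISG15}.

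For the Maharam extension of $\Gamma \curvearrowright X \times G/P$, I would first observe that the Radon-Nikodym cocycle is trivial on the $X$ factor since the Bernoulli action is pmp, so the extension factors as $\Gamma \curvearrowright X \times (G/P \times \R)$ with $\Gamma$ acting diagonally. Using Proposition \ref{proposition-maharam} and the homomorphism $\pi$, the action on $G/P \times \R$ is conjugate (through $\pi$) to $\Lambda \curvearrowright G/L$, which is again a measurable quotient of the strongly ergodic action $\Lambda \curvearrowright G$ and hence strongly ergodic. Plugging this into the same mixing argument will yield strong ergodicity of the Maharam extension.

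For part $\rm(ii)$, fullness of $M = \rL^\infty(X \times G/P) \rtimes \F_\infty$ is a direct application of Theorem \ref{thmC}: $\F_\infty$ is bi-exact and, by part $\rm(i)$, the action is strongly ergodic, essentially free and nonsingular on a standard probability space. To identify the type of $M$, I would use the flow-of-weights description: ergodicity of the Maharam extension (which part $\rm(i)$ provides, in fact with strong ergodicity) is equivalent to $M$ being of type $\rm III_1$. For the continuous core, the standard duality for abelian crossed products identifies $\core(M)$ with $\rL^\infty(X \times G/P \times \R) \rtimes \F_\infty$ endowed with the Maharam extension action. After replacing the $\sigma$-finite Maharam measure by an equivalent probability measure on $X \times G/P \times \R$ (which preserves the nonsingular orbit structure and hence strong ergodicity), part $\rm(i)$ again provides a strongly ergodic essentially free nonsingular action on a standard probability space, so a second application of Theorem \ref{thmC} shows that $\core(M)$ is full. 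The type $\rm II_\infty$ conclusion then follows from $M$ being of type $\rm III_1$.

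The main obstacle I foresee is the careful execution of the ultraproduct mixing argument in part $\rm(i)$ that pushes $\ker(\pi)$-asymptotically invariant sequences in $\rL^\infty(X \times G/P)$ into $\rL^\infty(G/P)$. The ingredients are classical (mixing of the Koopman representation of the Bernoulli shift on $\rL^2(X) \ominus \C 1$ and infiniteness of $\ker(\pi)$), but some care is needed because the ambient product measure is only $\Gamma$-quasi-invariant on the $G/P$ factor, even though it is $\ker(\pi)$-invariant there.
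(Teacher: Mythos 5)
Your overall architecture is right and part $\rm(ii)$ matches the paper, which is in fact terser than you are: it simply cites Theorem \ref{thmC}, and your remarks about identifying the type via ergodicity of the Maharam extension and about replacing the $\sigma$-finite Maharam measure by an equivalent probability measure before applying Theorem \ref{thmC} to the core are exactly the implicit steps. Where you genuinely diverge is the key reduction in $\rm(i)$. The paper does not run a Koopman-representation argument: it first proves strong ergodicity of $\Gamma \curvearrowright X \times G$ by observing that $(\rL^\infty(X) \ovt \rL^\infty(G)) \rtimes \ker(\pi) = (\rL^\infty(X) \rtimes \ker(\pi)) \ovt \rL^\infty(G)$, that $\rL^\infty(X) \rtimes \ker(\pi)$ is a \emph{full} ${\rm II_1}$ factor by Choda's theorem (using non-inner-amenability of the free group $\ker(\pi)$), and then invokes Connes' relative commutant theorem $P' \cap (P \ovt Q)^\omega = Q^\omega$ for full ${\rm II_1}$ factors $P$ to land in $(\rL^\infty(G)^\omega)^\Lambda = \C 1$; the actions on $X \times G/P$ and on $X \times G/L \cong X \times G/P \times \R$ are then handled in one stroke as measurable quotients of $X \times G$. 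Your spectral-gap route is more hands-on but also slightly more economical in hypotheses (nonamenability of $\ker(\pi)$ suffices, rather than non-inner-amenability plus fullness), whereas the paper's route outsources all estimates to two black boxes.

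Two corrections to your version. First, the ingredients you name --- mixing of the Bernoulli Koopman representation and \emph{infiniteness} of $\ker(\pi)$ --- are not sufficient: mixing pmp actions of infinite amenable groups are never strongly ergodic, so the concentration of $\ker(\pi)$-asymptotically invariant sequences onto $\rL^\infty(G/P)^\omega$ really needs the spectral gap coming from nonamenability of $\ker(\pi)$ together with the fact that $\rL^2(X) \ominus \C 1$ is a multiple of the regular representation of the torsion-free group $\F_\infty$ (hence of $\ker(\pi)$). The construction does guarantee $\ker(\pi)$ nonamenable, so this is a fixable misstatement rather than a fatal gap. Second, the difficulty you flag about quasi-invariance is absent precisely where your argument runs: $\ker(\pi)$ acts trivially on $G/P$ and measure-preservingly on $X$, so $\mu \otimes \nu$ is genuinely $\ker(\pi)$-invariant and the Koopman representation of $\ker(\pi)$ on $\rL^2(X \times G/P) \ominus \rL^2(G/P)$ is an honest unitary multiple of $\rL^2(X) \ominus \C 1$; quasi-invariance only enters for the full group $\Gamma$, at which point you are already inside $\rL^\infty(G/P)^\omega$ and conclude by strong ergodicity of $\Lambda \curvearrowright G/P$.
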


\begin{proof}
$\rm(i)$ As we already pointed out, the nonsingular action $\Gamma \curvearrowright X \times G/P$ is essentially free and so is its Maharam extension $\Gamma \curvearrowright X \times G/P \times \R$. 

We next prove that the nonsingular action $\Gamma \curvearrowright X \times G$ defined by $\gamma \cdot (x, h) = (\gamma x, \pi(\gamma) h)$ is strongly ergodic. Put $A = \rL^\infty(X)$ and $B = \rL^\infty(G)$ so that $\rL^\infty(X \times G) = A \ovt B$. Write $N = (A \ovt B) \rtimes \Gamma$. Fix a nonprincipal ultrafilter $\omega \in \beta(\N) \setminus \N$. We need to show that $N' \cap (A \ovt B)^\omega = \C 1$. Observe that $(A \ovt B) \rtimes \ker(\pi) = (A \rtimes \ker(\pi)) \ovt B$ and $N' \cap (A \ovt B)^\omega \subset (A \rtimes \ker(\pi))' \cap ((A \rtimes \ker(\pi)) \ovt B)^\omega$. Since $A \rtimes \ker(\pi)$ is a full type ${\rm II_1}$ factor, \cite[Theorem 2.1]{Co75} implies that $(A \rtimes \ker(\pi))' \cap ((A \rtimes \ker(\pi)) \ovt B)^\omega = B^\omega$ and hence $N' \cap (A \ovt B)^\omega = N' \cap B^\omega = (B^\omega)^{\Lambda}$. By \cite[Theorem A]{BISG15}, the nonsingular action $\Lambda \curvearrowright G$ is strongly ergodic, that is, $(B^\omega)^{\Lambda} = \C 1$. This implies that $N' \cap (A \ovt B)^\omega = \C 1$ and hence the nonsingular action $\Gamma \curvearrowright X \times G$ is strongly ergodic.

Since the nonsingular action $\Gamma \curvearrowright X \times G/P$ is a quotient of the strongly ergodic nonsingular action $\Gamma \curvearrowright X \times G$, it follows that $\Gamma \curvearrowright X \times G/P$ is also strongly ergodic. Consider the Maharam extension $\Lambda \curvearrowright G/P \times \R$ of the nonsingular action $\Lambda \curvearrowright G/P$. By Proposition \ref{proposition-maharam}, the Maharam extension $\Lambda \curvearrowright G/P \times \R$ is measurably conjugate to the nonsingular action $\Lambda \curvearrowright G/L$ where $L = \ker(\Delta_P)$ and $\Delta_P : P \to \R^*_+$ is the modular homomorphism. Since $\Gamma \curvearrowright X$ is pmp, the  action $\Gamma \curvearrowright X \times G/L$ defined by $\gamma \cdot (x, hL) = (\gamma x, \pi(\gamma)hL)$ can be identified with the Maharam extension of the nonsingular  action $\Gamma \curvearrowright X \times G/P$. Since the nonsingular action $\Gamma \curvearrowright X \times G/L$ is a quotient of the strongly ergodic nonsingular action $\Gamma \curvearrowright X \times G$, it follows that $\Gamma \curvearrowright X \times G/L$ is also strongly ergodic. Therefore, the Maharam extension $\Gamma \curvearrowright X \times G/P \times \R$ of the nonsingular action $\Gamma \curvearrowright X \times G/P$ is strongly ergodic.

$\rm(ii)$ This is a consequence of Theorem \ref{thmC}.
\end{proof}

Keep the same notation as above. Fix $0 < \lambda < 1$, put $T = \frac{2 \pi}{|\log \lambda|}$ and identify $\mathbf T = \R / (T \Z)$. Define the nonsingular  action $\Gamma \curvearrowright X \times G/P \times \mathbf T$ by 
$$\gamma \cdot (x, hP, t + T \Z) = (\gamma x, \pi(\gamma) hP, t + \Omega(\pi(\gamma), hP) + T \Z).$$
Observe that the nonsingular action $\Gamma \curvearrowright X \times G/P \times \mathbf T$ is a measurable quotient of the nonsingular action $\Gamma \curvearrowright X \times G/P \times \mathbf R$ and hence is strongly ergodic by Theorem \ref{thm-examples}$\rm(i)$. Moreover, we have a canonical identification 
$$\rL^\infty(X \times G/P \times \mathbf T) \rtimes \Gamma = M \rtimes_{\sigma^\varphi_T} \Z.$$
It follows that $\rL^\infty(X \times G/P \times \mathbf T) \rtimes \Gamma$ is a type ${\rm III_\lambda}$ factor by \cite[Lemma 1]{Co85}. Observe that $\rL^\infty(X \times G/P \times \mathbf T) \rtimes \Gamma$ is full by Theorem \ref{thmC}. Alternatively, since $\core(M)$ is full by Theorem \ref{thm-examples}$(\rm ii)$, $\rL^\infty(X \times G/P \times \mathbf T) \rtimes \Gamma = M \rtimes_{\sigma^\varphi_T} \Z$ is full by \cite[Lemma 6]{TU14}.

\begin{proof}[Proof of Corollary \ref{corD}]
This is a consequence of Theorem \ref{thm-examples} and the above construction.
\end{proof}

\section{Further remarks}

	In \cite{HR14}, the first named author and Raum investigated the asymptotic structure of Shlyakhtenko's free Araki--Woods factors \cite{Sh96}. Among other things, they proved in \cite[Theorem A]{HR14} that any diffuse von Neumann algebra $M$ with separable predual satisfying Ozawa's condition (AO) is $\omega$-{\em solid}, that is, for any von Neumann subalgebra with expectation $N \subset M$ such that the relative commutant $N' \cap M^\omega$ is diffuse, we have that $N$ is amenable. The proof was based on a combination of Ozawa's C$^*$-algebraic techniques and an analysis of the relative commutant $N' \cap M^\omega$ and its centralizer \cite[Theorem 2.3]{HR14} (see \cite[Lemma 2.7]{Io12a} for the tracial case). 
	
In this subsection, we observe that $\omega$-solidity can be easily obtained using the same proof as the one of Theorem \ref{theorem for thmA 1} without relying on the analysis of the relative commutant $N' \cap M^\omega$ from \cite[Theorem 2.3]{HR14}. We moreover remove the separability assumption of the predual.

\begin{thm}[{\cite[Theorem A]{HR14}}]
	Let $M$ be any diffuse $\sigma$-finite von Neumann algebra satisfying Ozawa's condition $\rm (AO)$. Let $p\in M$ be any nonzero projection and $N\subset pMp$ any von Neumann subalgebra with expectation. 
	
	Then at least one of the following conditions holds true:
\begin{itemize}
	\item The von Neumann algebra $N$ has a nonzero amenable direct summand.
	\item We have $N'\cap pM^\omega p \subset  pMp$. In that case, $N'\cap pM^\omega p = N' \cap pMp$ is moreover discrete.
\end{itemize}
\end{thm}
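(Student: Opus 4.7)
The plan is to mimic the proof of Theorem \ref{theorem for thmA 1}, replacing the crossed-product subalgebra $B^\omega \rtimes \Gamma \subset M^\omega$ by $M \subset M^\omega$, the relative compacts $\mathcal K_B$ by the usual compact operators $\K(\rL^2(M))$, and Proposition \ref{AO in crossed product} by Ozawa's condition (AO) applied to $M$. Assume $N' \cap pM^\omega p \not\subset pMp$. Since $\rE_\omega : M^\omega \to M$ is $N$-bimodular, it preserves $N' \cap pM^\omega p$, so after replacing a witness $Y_0$ by $Y_0 - \rE_\omega(Y_0)$ we obtain a nonzero $Y \in N' \cap pM^\omega p$ with $\rE_\omega(Y) = 0$. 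The same spectral-projection normalization as in the proof of Theorem \ref{theorem for thmA 1} applied to $\rE_\omega(Y^*Y)$ further produces $Y$ with $\rE_\omega(Y^*Y) = p_0$ a nonzero projection in $N' \cap pMp$. Writing $Y = (y_n)^\omega$, define the ucp map
\[
\Psi : \B(\rL^2(M)) \to \B(p_0 \rL^2(M)) : T \mapsto \sigma\text{-weak}\lim_{n \to \omega} y_n^* T y_n.
\]
Its restriction $\psi := \Psi|_M$ is a normal ucp map into $p_0 M p_0$ satisfying $\psi(x) = xp_0$ for every $x \in N$, and $\Psi|_{M'}$ is a $\ast$-homomorphism since $p_0 \in M$ commutes with $M'$.

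The key intermediate claim is $\Psi(\K(\rL^2(M))) = 0$, which is actually simpler than in Theorem \ref{theorem for thmA 1}: the equality $\rE_\omega(Y)=0$ is exactly $\sigma$-weak convergence $y_n \to 0$ along $\omega$ (and hence also $y_n^* \to 0$), so for a rank-one operator $\xi\eta^*$ we directly compute
\[
\langle y_n^* \xi \eta^* y_n \zeta_1, \zeta_2 \rangle = \langle y_n \zeta_1, \eta \rangle \, \langle y_n^* \xi, \zeta_2\rangle \xrightarrow[n \to \omega]{} 0,
\]
and the claim follows by norm density of finite-rank operators and norm-contractivity of $\Psi$. Taking a unital $\sigma$-weakly dense locally reflexive (or weakly exact) $\rC^*$-subalgebra $C \subset M$ from condition (AO), the multiplication map $C \ota JCJ \to \B(\rL^2(M))/\K(\rL^2(M))$ is min-continuous, and composing with the induced quotient map $\overline\Psi : \B(\rL^2(M))/\K(\rL^2(M)) \to \B(p_0\rL^2(M))$ we get that the ucp map $C \ota JCJ \to \B(p_0\rL^2(M)) : a \otimes JbJ \mapsto \psi(a)\,JbJp_0$ is min-continuous. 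Lemma \ref{lemma for AO} (with $\mathcal M = M$) then yields a ucp extension $\widetilde\psi : \B(\rL^2(M)) \to (JCJp_0)' \cap \B(p_0\rL^2(M)) = p_0 M p_0$ of $\psi$. The end of the proof of Theorem \ref{theorem for thmA 1} then applies verbatim: letting $z \in \mathcal Z(N)$ be the central projection with $\ker(x \mapsto xp_0) = Nz^\perp$, the composition $\iota^{-1} \circ \rE_{Np_0} \circ \widetilde\psi(z\cdot z) : \B(z\rL^2(M)) \to Nz$ is a norm-one projection, so $Nz$ is a nonzero amenable direct summand of $N$.

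For the ``moreover'' assertion, suppose we are in the second case of the dichotomy. The inclusion $N' \cap pM^\omega p \subset pMp \cap N' = N' \cap pMp$ is immediate and gives the equality. To establish discreteness, assume for contradiction that $N' \cap pMp$ has a non-type-${\rm I}$ direct summand; by \cite[Corollary 8]{CS78} and \cite[Theorem 11.1]{HS90} it then contains a copy of the hyperfinite ${\rm II_1}$ factor with expectation, and in particular a Haar unitary $u$, whose powers $u^n$ are unitaries in $N' \cap pMp$ converging $\sigma$-weakly to $0$. Since $W^*(u)$ is abelian, the sequence $(u^n)_n$ lies in $\mathcal M^\omega(W^*(u)) \subset \mathcal M^\omega(M)$, and the unitary $(u^n)^\omega \in W^*(u)^\omega$ sits in $N' \cap pM^\omega p = N' \cap pMp \subset M$; hence $(u^n)^\omega = \rE_\omega((u^n)^\omega) = \sigma\text{-weak}\lim_{n \to \omega} u^n = 0$, contradicting its unitarity. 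The main technical content of the plan is the combined vanishing-plus-extension step $\Psi(\K(\rL^2(M)))=0 \leadsto \widetilde\psi$ via Lemma \ref{lemma for AO}; the rest is a largely mechanical transcription of the corresponding steps from Theorem \ref{theorem for thmA 1} together with the short Haar-unitary argument for discreteness.
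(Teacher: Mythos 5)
Your proof of the dichotomy is essentially the paper's own: the authors simply invoke ``the exact same argument as in the proof of Theorem \ref{theorem for thmA 1} using Ozawa's condition (AO) in lieu of Proposition \ref{AO in crossed product}'', and your transcription of that argument is correct. In particular, your observation that $\rE_\omega(Y)=0$ (i.e.\ $y_n\to 0$ $\sigma$-weakly along $\omega$) reduces the vanishing of $\Psi$ on $\K(\rL^2(M))$ to a direct rank-one computation is accurate, and the passage through Lemma \ref{lemma for AO} to the norm-one projection onto $Nz$ is a faithful adaptation. (One cosmetic point: condition (AO) provides dense subalgebras $A\subset M$ and $B\subset M'$ that need not satisfy $B=J^MAJ^M$; Lemma \ref{lemma for AO} is stated for $C$ and $J^{\mathcal M}CJ^{\mathcal M}$, but its proof and conclusion go through with $B$ in place of $J^MCJ^M$, since all that is used at the end is the $\sigma$-weak density of $B$ in $M'$.)

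For the ``moreover'' statement you genuinely diverge from the paper: the authors deduce discreteness from solidity of $M$ (\cite[Theorem 6]{Oz03}) --- a diffuse subalgebra with expectation of $N'\cap pMp$ would have relative commutant containing a corner of $N$, which would then be amenable --- whereas you reuse the inclusion $N'\cap pM^\omega p\subset M$ together with a Haar unitary whose powers define a unitary of $pM^\omega p$ annihilated by $\rE_\omega$. Your mechanism is sound and self-contained (the membership $(u^n)_n\in\mathcal M^\omega(M)$ does require $W^*(u)\subset M$ to be with expectation, which holds along the chain of expectations you have available), but as written it proves too little: excluding non-type-${\rm I}$ direct summands only shows that $N'\cap pMp$ is of type ${\rm I}$, and a type ${\rm I}$ algebra such as $\rL^\infty([0,1])$ need not be discrete. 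Here ``discrete'' must mean atomic, i.e.\ no diffuse direct summand, since that is exactly what is needed to recover $\omega$-solidity from the theorem. The fix is immediate: a diffuse direct summand of $N'\cap pMp$ always contains a diffuse abelian von Neumann subalgebra with expectation (take the center on its diffuse type ${\rm I}$ part, and a diffuse abelian subalgebra of the hyperfinite subfactor provided by \cite{CS78} and \cite{HS90} on its non-type-${\rm I}$ part), hence a Haar unitary, and your computation $(u^n)^\omega=\rE_\omega((u^n)^\omega)=0$ then yields the same contradiction. With that adjustment the proof is complete.
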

\begin{proof}
	Suppose that $N$ has no amenable direct summand. Then the exact same argument as in the proof of Theorem \ref{theorem for thmA 1} using Ozawa's condition (AO) in lieu of Proposition \ref{AO in crossed product} shows that $N'\cap pM^\omega p\subset pMp $ and hence $N'\cap pM^\omega p= N'\cap pMp $. Since $M$ is diffuse and solid \cite[Theorem 6]{Oz03} (see also \cite[Theorem 2.5]{VV05}), it follows that $p M p$ is also diffuse and solid. Since $N \subset pMp$ has no amenable direct summand, it follows that $N' \cap pMp$ is necessarily discrete.
\end{proof}

In view of Proposition \ref{AO in ultraproduct}, we finally observe the following condition (AO) in the ultraproduct representation. 
\begin{prop}
	Let $M$ be any $\sigma$-finite von Neumann algebra and $\omega \in \beta(\N) \setminus \N$ any nonprincipal ultrafilter. Denote by $(M, \rL^2(M), J^M, \mathfrak P^M)$ (resp.\ $(M^\omega, \rL^2(M^\omega), J^{M^\omega}, \mathfrak P^{M^\omega})$) a standard form for $M$ (resp.\ $M^\omega$). Assume there are unital $\rC^*$-subalgebras $A, B\subset M$ such that the map
$$\nu : A \ota J^MBJ^M \to \B(\rL^2(M))/\K(\rL^2(M)) : a \otimes J^MbJ^M \mapsto  a \, J^MbJ^M + \K(\rL^2(M))$$ 
is continuous with respect to the minimal tensor norm. 

Then the map 
$$\nu_\omega : A \otimes_{\rm alg} J^{M^\omega}B J^{M^\omega} \rightarrow \B(\rL^2(M^\omega))/\K(\rL^2(M^\omega)) :  a \otimes J^{M^\omega} b J^{M^\omega} \mapsto a \, J^{M^\omega}bJ^{M^\omega} + \K(\rL^2(M^\omega))$$
is continuous with respect to the minimal tensor norm.
\end{prop}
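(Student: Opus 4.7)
The plan is to adapt the proof of Proposition~\ref{AO in ultraproduct} (the crossed-product analogue) to this more general setting, using the Groh--Raynaud ultraproduct as the bridge between $\B(\rL^2(M))$ and $\B(\rL^2(M^\omega))$. Fix any faithful state $\varphi \in M_\ast$, form $N := \prod^\omega M \subset \B(\rL^2(M)_\omega)$, let $p \in N$ be the support projection of $\varphi_\omega \in N_\ast$, and set $\widetilde p := p J^N p J^N$ so that $\widetilde p N \widetilde p \cong M^\omega$ and $\widetilde p\, \rL^2(M)_\omega = \rL^2(M^\omega)$ with $J^{M^\omega} = p J^N p$. Write $C := \rC^\ast\{A, J^M B J^M\} \subset \B(\rL^2(M))$ and $C_\omega := \rC^\ast\{A, J^{M^\omega} B J^{M^\omega}\} \subset \B(\rL^2(M^\omega))$; both $C + \K(\rL^2(M))$ and $C_\omega + \K(\rL^2(M^\omega))$ are $\rC^\ast$-subalgebras as sums of a $\rC^\ast$-subalgebra and a closed ideal.

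I will build a $\ast$-homomorphism $\theta : C + \K(\rL^2(M)) \to \B(\rL^2(M^\omega))$ by $\theta(T) := \widetilde p\, \pi^\omega(T)\, \widetilde p$; the main point is that $\widetilde p$ commutes with $\pi^\omega(C + \K(\rL^2(M)))$. Since $M\xi_\varphi$ is dense in $\rL^2(M)$ and $(x\xi_\varphi)_\omega = w_\varphi((x)^\omega \xi_{\varphi^\omega}) \in \rL^2(M^\omega)$ for every $x \in M$, a density argument shows $(\xi)_\omega \in \rL^2(M^\omega)$ for every $\xi \in \rL^2(M)$; in particular $\rL^2(M^\omega)$ is $\pi^\omega(M)$-invariant, and by self-adjointness of $\pi^\omega(M)$ the orthogonal complement $\rL^2(M^\omega)^\perp$ is also $\pi^\omega(M)$-invariant, so $\widetilde p$ commutes with $\pi^\omega(M) \supset \pi^\omega(A)$. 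Using $\pi^\omega(J^M M J^M) = J^N \pi^\omega(M) J^N$ together with $J^N p J^N \in N'$, the same argument yields commutation of $\widetilde p$ with $\pi^\omega(J^M B J^M)$. Finally, for any rank-one operator $R = |\xi\rangle\langle\eta|$ with $\xi, \eta \in \rL^2(M)$ one has $\pi^\omega(R) = |(\xi)_\omega\rangle\langle(\eta)_\omega|$ with $(\xi)_\omega, (\eta)_\omega \in \rL^2(M^\omega)$, whence $\widetilde p\, \pi^\omega(R) = \pi^\omega(R) = \pi^\omega(R)\, \widetilde p$; by norm-density, $\widetilde p$ commutes with $\pi^\omega(\K(\rL^2(M)))$.

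Under the Ando--Haagerup identification $\widetilde p N \widetilde p \cong M^\omega$, one has $\widetilde p\, \pi^\omega(x)\, \widetilde p = (x)^\omega$, so $\theta(a) = a$ for $a \in A \subset M \subset M^\omega$ and $\theta(J^M b J^M) = J^{M^\omega} b J^{M^\omega}$ for $b \in B$. Moreover, $\theta$ maps $\K(\rL^2(M))$ into $\K(\rL^2(M^\omega))$ because $\pi^\omega$ sends rank-one operators to rank-one operators on $\rL^2(M)_\omega$ (hence compacts to compacts) and compression by $\widetilde p$ preserves compactness. Thus $\theta(C) \subset C_\omega$ and $\theta$ descends to a $\ast$-homomorphism
\[
\widetilde \theta : (C + \K(\rL^2(M)))/\K(\rL^2(M)) \to \B(\rL^2(M^\omega))/\K(\rL^2(M^\omega)).
\]
Writing $\iota : A \ota J^{M^\omega}B J^{M^\omega} \to A \ota J^M B J^M$ for the tautological $\ast$-isomorphism, one has $\nu_\omega = \widetilde\theta \circ \nu \circ \iota$, which is continuous with respect to the minimal tensor norm because $\nu$ is (by hypothesis) and $\iota, \widetilde\theta$ are $\ast$-homomorphisms. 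The delicate step --- and the principal obstacle when writing this out in full --- is the commutation $[\widetilde p, \pi^\omega(M)] = 0$ together with the identification $\widetilde p\, \pi^\omega(x)\, \widetilde p = (x)^\omega$; these two structural facts about the Ocneanu versus Groh--Raynaud ultraproduct comparison drive the entire argument, after which the remainder is a direct transcription of Proposition~\ref{AO in ultraproduct}.
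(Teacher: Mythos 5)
Your proof is correct and follows essentially the same route as the paper's: pass to the Groh--Raynaud ultraproduct, show that $\widetilde p = pJ^NpJ^N$ commutes with $\pi^\omega(C + \K(\rL^2(M)))$, compress by $\widetilde p$ to get $\theta$, and factor $\nu_\omega = \widetilde\theta \circ \nu \circ \iota$ through the quotient. The only (harmless) variation is that you verify $\theta(\K(\rL^2(M))) \subset \K(\rL^2(M^\omega))$ directly on rank-one operators, whereas the paper writes $\K(\rL^2(M))$ as the norm closure of $MeM$ for the rank-one projection $e$ onto $\C\xi_\varphi$ and invokes its Lemma \ref{lemma for AO in ultraproduct} to get $\theta(e) = f$.
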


\begin{proof}
The proof is a variation of the one of Proposition \ref{AO in ultraproduct}. Put 
\begin{align*}
C &:=\rC^* \left\{ M, J^MM J^M \right\} \subset \B(\rL^2(M)) \\
 C_\omega &:=\rC^* \left\{ M, J^{M^\omega}M J^{M^\omega} \right\} \subset \B(\rL^2(M^\omega)).
\end{align*}
Observe that $C + \mathbf K(\rL^2(M))$ (resp.\ $C_\omega + \mathbf K(\rL^2(M^\omega))$) is a $\rC^*$-subalgebra of $\mathbf B(\rL^2(M))$ (resp.\ $\mathbf B(\rL^2(M^\omega))$). Fix a faithful state $\varphi \in M_\ast$. Denote by $e : \rL^2(M) \to \C \xi_{\varphi}$ and $f : \rL^2(M^\omega) \to \C \xi_{\varphi^\omega}$ the corresponding orthogonal projections. Observe that $\mathbf K(\rL^2(M))$ is the norm closure in $\mathbf B(\rL^2(M))$ of $M e M$. Denote by $N := \prod^\omega M$ the Groh--Raynaud ultraproduct and by $p \in N$ the support projection of the ultraproduct state $\varphi_\omega \in N_\ast$.

\begin{claim}
There is a $\ast$-homomorphism $\theta : C + \mathbf K(\rL^2(M)) \to \mathbf B(\rL^2(M^\omega))$ such that $\theta(x) = x$ and $\theta(J^M y J^M) = J^{M^\omega} y J^{M^\omega}$ for all $x, y \in M$ and $\theta(e) = f$. 
\end{claim}

\begin{proof}[Proof of the Claim]
Keep the same notation as in the proof of the Claim of Proposition \ref{AO in ultraproduct}. By Lemma \ref{lemma for AO in ultraproduct}, $\pi^\omega((e)_n)$ commutes with $p$ and $J^N$ and hence $\pi^\omega((e)_n)$ commutes with $\widetilde p := p J^N p J^N$. Since $\widetilde p$ commutes with $\pi^\omega(M)$ and $\pi^\omega(J^M M J^M) = J^N \pi^\omega(M) J^N$, $\widetilde p$ commutes with $\pi^\omega(C + \mathbf K(\rL^2(M)))$. Recall that $\widetilde p N \widetilde p \cong p N p \cong M^\omega$ and $\widetilde p \rL^2(M)_\omega = \rL^2(M^\omega)$. Then the $\ast$-homomorphism
$$\theta : C + \mathbf K(\rL^2(M)) \to \mathbf B(\rL^2(M^\omega)) : T \mapsto \widetilde p \pi^\omega(T) \widetilde p$$
satisfies all the conditions of the Claim.
\end{proof}

Since $\theta(C)=C_\omega$ and $\theta(\mathbf K(\rL^2(M))) \subset \mathbf K(\rL^2(M^\omega))$, $\theta$ induces a $\ast$-homomorphism 
$$\widetilde{\theta} : \left( C + \mathbf K(\rL^2(M)) \right)/\mathbf K(\rL^2(M)) \to \left( C_\omega + \mathbf K(\rL^2(M^\omega)) \right)/\mathbf K(\rL^2(M^\omega)).$$ 
Denote by $\iota : A \otimes_{\min} J^{M^\omega}B J^{M^\omega} \to A \otimes_{\min} J^{M}BJ^{M}$ the tautological $\ast$-isomorphism. Then the composition map 
$$\nu_\omega = \widetilde{\theta}\circ \nu \circ \iota : A \otimes_{\alg} J^{M^\omega}BJ^{M^\omega} \rightarrow \B(\rL^2(M^\omega))/\K(\rL^2(M^\omega))$$
is continuous with respect to the minimal tensor norm. 
\end{proof}



\begin{thebibliography}{BdlHV08}
%
\bibitem[AO74]{AO74} {\sc C.A. Akemann,  P.A. Ostrand}, {\it On a tensor product $\rC^*$-algebra associated with the free group on two generators.} J. Math. Soc. Japan {\bf 27} (1975), 589--599. 
%
\bibitem[AH12]{AH12} {\sc H. Ando, U. Haagerup}, {\it Ultraproducts of von Neumann algebras.} J. Funct. Anal. {\bf 266} (2014), 6842--6913.
%
\bibitem[BdlHV08]{BdlHV08} {\sc M. Bekka, P. de la Harpe, A. Valette}, {\it Kazhdan's property (T).} New Mathematical Monographs, {\bf 11}. Cambridge University Press, Cambridge, 2008. xiv+472 pp.
%
\bibitem[BISG15]{BISG15} {\sc R. Boutonnet, A. Ioana, A. Salehi Golsefidy}, {\it Local spectral gap in simple Lie groups and applications.} {\tt arXiv:1503.06473}
%
\bibitem[BN11]{BN11} {\sc L. Bowen, A. Nevo}, {\it Pointwise ergodic theorems beyond amenable groups.} Ergodic Theory Dynam. Systems {\bf 33} (2013), 777--820.
%
\bibitem[BO08]{BO08} {\sc N.P. Brown, N. Ozawa}, {\it C$^*$-algebras and finite-dimensional approximations.} Graduate Studies in Mathematics, {\bf 88}. American Mathematical Society, Providence, RI, 2008.
%
\bibitem[Ch81]{Ch81} {\sc M. Choda}, {\it Inner amenability and fullness.} Proc. Amer. Math. Soc. {\bf 86} (1982), 663--666.
%
\bibitem[Co72]{Co72} {\sc A. Connes}, {\it Une classification des facteurs de type ${\rm III}$.} Ann. Sci. \'{E}cole Norm. Sup. {\bf 6} (1973), 133--252.
%
\bibitem[Co74]{Co74} {\sc A. Connes}, {\it Almost periodic states and factors of type ${\rm III_1}$.} J. Funct. Anal. {\bf 16} (1974), 415--445.
%
\bibitem[Co75]{Co75} {\sc A. Connes}, {\it Classification of injective factors. Cases ${\rm II_1}$, ${\rm II_\infty}$, ${\rm III_\lambda}$, $\lambda \neq 1$.} Ann. of Math. {\bf 74} (1976), 73--115.
%
\bibitem[Co85]{Co85} {\sc A. Connes}, {\it Factors of type ${\rm III_1}$, property $L'_\lambda$ and closure of inner automorphisms.} J. Operator Theory {\bf 14} (1985), 189--211.
%
\bibitem[CJ81]{CJ81} {\sc A. Connes, V.F.R. Jones}, {\it A ${\rm II_1}$ factor with two non-conjugate Cartan subalgebras.} Bull. Amer. Math. Soc. {\bf 6} (1982), 211--212.
%
\bibitem[CS78]{CS78} {\sc A. Connes, E. St\o rmer}, {\it Homogeneity of the state space of factors of type $\rm III_1$.} J. Funct. Anal. {\bf 28} (1978), 187--196.
%
\bibitem[FM75]{FM75} {\sc J. Feldman, C.C. Moore}, {\it Ergodic equivalence relations, cohomology, and von Neumann algebras. ${\rm I}$, ${\rm II}$.} Trans. Amer. Math. Soc. {\bf 234} (1977), 289--324, 325--359.
%
\bibitem[Ga10]{Ga10} {\sc D. Gaboriau}, {\it Orbit equivalence and measured group theory.}
Proceedings of the International Congress of Mathematicians (Hyderabad, 2010), Vol. III, Hindustan Book Agency (2010), 1501--1527.
%
\bibitem[HS90]{HS90} {\sc U. Haagerup, E. St\o rmer}, {\it Equivalence of normal states on von Neumann algebras and the flow of weights.} Adv. Math. {\bf 83} (1990), 180--262.
%
\bibitem[Ho15]{Ho15} {\sc D.J. Hoff}, {\it Von Neumann algebras of equivalence relations with nontrivial one-cohomology.} J. Funct. Anal. {\bf 270} (2016), 1501--1536.
%
\bibitem[HI15]{HI15} {\sc C. Houdayer, Y. Isono}, {\it Unique prime factorization and bicentralizer problem for a class of type ${\rm III}$ factors.} {\tt arXiv:1503.01388}
%
\bibitem[HR14]{HR14} {\sc C. Houdayer, S. Raum}, {\it Asymptotic structure of free Araki-Woods factors.} Math. Ann. {\bf 363} (2015), 237--267.
%
\bibitem[HU15]{HU15} {\sc C. Houdayer, Y. Ueda}, {\it Rigidity of free product von Neumann algebras.} {\tt arXiv:1507.02157}
%
\bibitem[HV12]{HV12} {\sc C. Houdayer, S. Vaes}, {\it Type ${\rm III}$ factors with unique Cartan decomposition.} J. Math. Pures Appl. {\bf 100} (2013), 564--590.
%
\bibitem[Io12a]{Io12a} {\sc A. Ioana}, {\it Cartan subalgebras of amalgamated free product ${\rm II_1}$ factors.} Ann. Sci. \'Ecole Norm. Sup. {\bf 48} (2015), 71--130.
%
\bibitem[Io12b]{Io12b} {\sc A. Ioana}, {\it  Classification and rigidity for von Neumann algebras.} Proceedings of the 6th European Congress of Mathematics (Krakow, 2012), European Mathematical Society Publishing House
%
\bibitem[Is12]{Is12} {\sc Y. Isono}, {\it Weak exactness for $\rC^*$-algebras and application to condition $\rm (AO)$}. J. Funct. Anal. {\bf 264} (2013), 964--998.
%
\bibitem[Is13]{Is13} {\sc Y. Isono}, {\it On bi-exactness of discrete quantum groups.} Int. Math. Res. Not. Volume 2014, Article ID rnu043.
%
\bibitem[MvN43]{MvN43} {\sc F. Murray, J. von Neumann}, {\it Rings of operators.} ${\rm IV}$.  Ann. of Math. {\bf 44} (1943), 716--808.
%
\bibitem[Oc85]{Oc85} {\sc A. Ocneanu}, {\it Actions of discrete amenable groups on von Neumann algebras.} Lecture Notes in Mathematics, {\bf 1138}. Springer-Verlag, Berlin, 1985. iv+115 pp.
%
\bibitem[Oz03]{Oz03} {\sc N. Ozawa}, {\it Solid von Neumann algebras.} Acta Math.\ {\bf 192} (2004), 111--117.
%
\bibitem[Oz04]{Oz04} {\sc N. Ozawa}, {\it A Kurosh type theorem for type $\rm II_1$ factors.} Int. Math. Res. Not. (2006), Art. ID 97560, 21 pp.
%
\bibitem[Oz16]{Oz16} {\sc N. Ozawa}, {\it A remark on fullness of some group measure space von Neumann algebras.} {\tt arXiv:1602.02654}

\bibitem[Pe06]{Pe06} {\sc J. Peterson}, {\it $\rL^2$-rigidity in von Neumann algebras.} Invent. Math. {\bf 175} (2009), 417--433.
%
\bibitem[Po95]{Po95} {\sc S. Popa}, {\it Classification of subfactors and their endomorphisms.} CBMS Regional Conference Series in Mathematics, {\bf 86}. Published for the Conference Board of the Mathematical Sciences, Washington, DC; by the American Mathematical Society, Providence, RI, 1995. x+110 pp.
%
\bibitem[Po01]{Po01} {\sc S. Popa}, {\it On a class of type $\rm II_1$ factors with Betti numbers invariants.} Ann. of Math. {\bf 163} (2006), 809--899.
%
\bibitem[Po03]{Po03} {\sc S. Popa}, {\it Strong rigidity of $\rm II_1$ factors arising from malleable actions of w-rigid groups $\rm I$.} Invent. Math. {\bf 165} (2006), 369--408.
%
\bibitem[Po06a]{Po06a} {\sc S. Popa}, {\it Deformation and rigidity for group actions and von Neumann algebras.} Proceedings of the International Congress of Mathematicians (Madrid, 2006), Vol.\ I, European Mathematical Society Publishing House, 2007, p.\ 445--477.
%
\bibitem[Po06b]{Po06b} {\sc S. Popa}, {\it On the superrigidity of malleable actions with spectral gap.} J. Amer. Math. Soc. {\bf 21} (2008), 981--1000.
%
\bibitem[Po06c]{Po06c} {\sc S. Popa}, {\it On Ozawa's property for free group factors.} Int. Math. Res. Not. IMRN {\bf 2007}, no. 11, Art. ID rnm036, 10 pp.
%
\bibitem[PV11]{PV11} {\sc S. Popa, S. Vaes}, {\it Unique Cartan decomposition for ${\rm II_1}$ factors arising from arbitrary actions of free groups.} Acta Math. {\bf 212} (2014), 141--198.
%
\bibitem[Ra99]{Ra99} {\sc Y. Raynaud}, {\it On ultrapowers of non commutative L$_p$-spaces.} J. Operator Theory {\bf 48} (2002), 41--68.
%
\bibitem[Sa09]{Sa09} {\sc H. Sako}, {\it Measure equivalence rigidity and bi-exactness of groups.} J. Funct. Anal. {\bf 257} (2009), 3167--3202.
%
\bibitem[Sh96]{Sh96} {\sc D. Shlyakhtenko}, {\it Free quasi-free states.} Pacific J. Math. {\bf 177} (1997), 329--368.
%
\bibitem[Sk88]{Sk88} {\sc G. Skandalis}, {\it Une notion de nucl�\'earit�\'e en K-th\'e�orie (d'apr\`e�s J. Cuntz).} K-Theory {\bf 1} (1988), 549--573.
%
Encyclopaedia of Mathematical Sciences, {\bf 125}. Operator Algebras and Non-commutative Geometry, 6. Springer-Verlag, Berlin, 2003. xxii+518 pp.
%
\bibitem[TU14]{TU14} {\sc R. Tomatsu, Y. Ueda}, {\it A characterization of fullness of continuous cores of type ${\rm III_1}$ free product factors.} To appear in Kyoto J. Math. {\tt arXiv:1412.2418}
%
\bibitem[Ue00]{Ue00} {\sc Y. Ueda}, 
{\it Fullness, Connes' $\chi$-groups, and ultra-products of amalgamated free products over Cartan subalgebras.}  Trans. Amer. Math. Soc. {\bf 355} (2003), 349--371. 
%
\bibitem[Va10]{Va10} {\sc S. Vaes}, {\it Rigidity for von Neumann algebras and their invariants.} Proceedings of the International Congress of Mathematicians (Hyderabad, 2010), Vol.\ III, Hindustan Book Agency, 2010, 1624--1650.
%
\bibitem[VV05]{VV05} {\sc S. Vaes, R. Vergnioux}, {\it The boundary of universal discrete quantum groups, exactness, and factoriality.} Duke Math. J. {\bf 140} (2007), 35--84.
%
%
\bibitem[Zi84]{Zi84} {\sc R. Zimmer}, {\it Ergodic theory and semisimple groups.} Monographs in Mathematics, {\bf 81}. Birkh\"auser Verlag, Basel, 1984. x+209 pp.
\end{thebibliography}
\end{document}